\documentclass[11pt]{article}
\usepackage{amsmath,amsfonts,amssymb}
\usepackage{graphicx, color}

\newtheorem{theorem}{Theorem}[section]
\newtheorem{lemma}[theorem]{Lemma}
\newtheorem{proposition}[theorem]{Proposition}

\newtheorem{remark}[theorem]{Remark}
\newtheorem{example}[theorem]{Example}

\def\sqr#1#2{\vbox{\hrule height .#2pt
\hbox{\vrule width .#2pt height #1pt \kern #1pt
\vrule width .#2pt}\hrule height .#2pt }}
\def\square{\sqr74}
\def\endproof{\hphantom{MM}\hfill\llap{$\square$}\goodbreak}

\def\iff{\Longleftrightarrow}

\def\implies{\Longrightarrow}
\def\begi{\begin{itemize}}
\def\endi{\end{itemize}}

\def\Hat{\widehat}

\def\bega{\begin{array}}
\def\enda{\end{array}}

\def\dint{\int\!\!\int}
\def\Q{{\cal Q}}

\def\forall{\hbox{for every~ }}
\def\ov{\overline}
\def\ds{\displaystyle}
\def\R{\mathbb{R}}

\def\L{{\bf L}}

\def\forall{\hbox{for all}~}

\def\vp{\varphi}

\def\F{{\mathcal F}}
\def\O{{\mathcal O}}
\def\C{{\mathcal C}}

\def\Tilde{\widetilde}

\def\bfy{{\bf y}}

\def\argmax{\hbox{arg}\!\max}

\def\v{\vskip 1em}

\def\vsk{\vskip 3em}
\def\ve{\varepsilon}

\def\bel{\begin{equation}\label}
\def\eeq{\end{equation}}

\newcommand\N{{\mathbb N}\,}

\begin{document}
\title{\bf On the  Nash 
Equilibrium for a Model of Traffic Flow}

\author{Alberto Bressan$^{(*)}$, Daniel Lee Harmon$^{(*)}$ and  Elio Marconi$^{(**)}$\\~~\\
 {\small $^{(*)}$~Department of Mathematics, Penn State University.}
 \\
 {\small $^{(**)}$~Department of Mathematics 
 ``Tullio Levi-Civita", University of Padova.}\\~~\\
{\small E-mails: axb62@psu.edu,~~dharmon@psu.edu,~~elio.marconi@unipd.it}
}
\maketitle
\v
\begin{abstract} The paper consider a conservation law model of traffic flow on a highway, 
where each driver chooses an optimal departure time $\tau^d$, based on a departure cost 
$\vp(\tau^d)$ and an arrival cost $\psi(\tau^a)$. Nash equilibria are studied, where
no driver can reduce his own cost by changing his departure time.

The first results provide continuity estimates on the arrival time, depending on the distribution of departures.
In terms of the $\L^1$ distance, we show that the corresponding  map is H\"older continuous with exponent $1/2$
in Lagrangian variables (labelling individual drivers), and Lipschitz continuous in Eulerian space variables.

Concerning the regularity of the a Nash equilibrium, our results show that, when the total number of drivers is sufficiently large, a queue alway forms at the entrance of the highway and the corresponding traffic density contains a shock.
Moreover, for any number of drivers (possibly small), 
we prove that for generic $\C^2$ cost functions $\vp, \psi$ the Nash equilibrium solution still contains a shock.

In the last section of the paper, assuming that the initial departure rate is not a Nash equilibrium, 
we consider an evolution equation for the departure rate, where each driver keeps changing his
daily departure time  in order to reduce his own cost.   It is proved that this nonlocal integro-differential evolution equation admits a unique solution, globally in time. This provides some 
theoretical support to earlier numerical  simulations, which indicated
the dynamic instability of the Nash equilibrium and the possible existence of a chaotic attractor.
\end{abstract}

\maketitle


\section{Introduction}
\label{sec:1}
\setcounter{equation}{0}
Following a model introduced in \cite{BH1} we consider a group of drivers that, on a daily basis,
travel along a highway: starting from a residential neighborhood and reaching a workplace
in the city center.   As
in the classic Lighthill-Whitham-Richards model of traffic flow \cite{LW, R}, the traffic density $\rho(\tau,\xi)$ 
on the highway will be described by
the conservation law
\bel{1}
\rho_\tau+\bigl(\rho v(\rho)\bigr)_\xi~=~0.\eeq
Here  $\tau$ is time, $\xi$ is the space variable labeling points on the highway, and 
$v(\rho)$ is a decreasing function describing how the velocity of cars depends on the traffic density.

Given a cost $\vp(\tau^{d})$ for early departure
and a cost $\psi(\tau^{a})$ for late arrival,  we consider the optimization problem
for each driver, who needs to choose his  departure time in order to minimize the sum of the two costs.
Of course, the arrival time depends on the overall traffic density, and hence on the departure 
choices of all other drivers.  We say that a departure rate is a Nash equilibrium if no driver
can lower his total cost by choosing a different departure time. In this setting, 
the existence and uniqueness of the Nash equilibrium was proved in \cite{BH1}. 

In the present paper, several new results in this direction will be given.  
In Section~\ref{sec:2} we introduce the
basic definitions and assumptions, and review the construction of solutions to the conservation law via the Lax formula.

Sections~\ref{sec:3} and \ref{sec:4} analyze the departure-to-arrival time map: $\tau^d\mapsto\tau^a$
in Lagrangian and Eulerian variables, respectively.   More precisely, let $\kappa$ be the total number of drivers
and let $\beta\in[0,\kappa]$ be the Lagrangian variable labeling one particular driver.
In Theorem~\ref{t:31} we consider the map $\tau^d(\beta)\mapsto\tau^a(\beta)$.  By the Lax formula, 
this corresponds to a sup-convolution.  A detailed analysis shows
that the departure-to-arrival time  map is H\"older continuous with exponent $1/2$ as a map from $\L^1\bigl([0,\kappa]\bigr)$
into itself.    On the other hand, in Theorem~\ref{t:41} we consider a departure rate $\bar u\in \L^1(\R)$, so that
$\int_a^b \bar u(x) dx$ yields the amount of drivers who depart at a time $x\in [a,b]$.   Calling $y^{\bar u}(x)$
the arrival time of a driver departing at time $x\in\R$, we prove that the map $\bar u(\cdot)\mapsto y^{\bar u}(\cdot)$,
$$\hbox{[departure rate]} \quad\mapsto\quad \hbox{[arrival time function]}$$
is Lipschitz continuous w.r.t.~the $\L^1$ distance.   The proof is first achieved for a class of piecewise constant departure rates, where a homotopy argument can be used. Then it is extended by continuity to all $\bar u$
with bounded $\L^1$ norm and bounded support.

Section~\ref{sec:5} reviews the concept of Nash equilibrium departure rate, and provides an explicit example
where this departure rate is smooth. 
In Section~\ref{sec:6} we derive some properties of the Nash equilibrium,  and show some numerical simulations.

The following Sections~\ref{sec:7} and \ref{sec:8}  analyze the possible emergence of  shocks
 in the Nash equilibrium solution.  According to Proposition~\ref{p:71},
if the total number of drivers is large enough, then
a queue necessarily forms at the entrance of the highway.  
At the time when the queue is depleted, a shock appears in
the solution.
On the other hand, if the total number of drivers is sufficiently small, no queue is formed.  
However, this does not rule out the later emergence of shocks.  

Section~\ref{sec:8} begins by establishing the  one-sided Lipschitz
continuity of the  Nash equilibrium departure rate. It turns out that this regularity is optimal.
Indeed, as shown by Theorem~\ref{t:82},
for {\em generic} departure and arrival cost functions $\vp, \psi\in \C^2$ (i.e., ranging in an open dense set), 
the traffic pattern determined by the Nash equilibrium contains a shock, no matter how small is
the total number of drivers.

Finally, in Section~\ref{sec:9} we consider an evolutionary model for the departure rate $\bar u$, originally introduced in \cite{BLSY}.
Roughly speaking, assume that on day 1 the departure rate
differs from the equilibrium.  Then the following day some of the drivers may decide to change their departure time,
seeking a lower individual cost.  If this pattern is repeated day after day, will the departure rate oscillate chaotically
or will it approach the unique Nash equilibrium?   
As a first step to understand this challenging problem, 
in Theorem~\ref{t:91} we prove the well posedness of the integro-differential evolution equation (\ref{mod2}).  Namely,
 for any initial departure rate $\bar u\in \L^1\cap \L^\infty$ with bounded support, a unique solution exists, depending continuously on the initial data in the $\L^1$ distance.   This provides theoretical support to the 
simulations performed in \cite{BLSY}, which suggest the dynamic instability of the Nash equilibrium
and the possible existence of a chaotic attractor.

For an introduction to conservation laws we refer to \cite{Bbook, GMC, HR}. More generally, for
models of traffic flow also on networks we refer to \cite{GP}.  See \cite{FH1, FH2}
for an alternative perspective with engineering applications.   Results on the structure of generic solutions
to conservation laws can be found in \cite{BN, S}.

\section{The basic model}\label{sec:2}
\setcounter{equation}{0}
Throughout the following we focus on the boundary value problem, where the incoming flux of cars
at the beginning of the highway is assigned as a function of time (see Fig.~\ref{f:tf270}).    This can be 
formulated as a Cauchy problem, by switching the roles of the variables.  Together with (\ref{1})
we thus consider the conservation law
\bel{claw1}
u_t +f(u)_x~=~0
\qquad\qquad (t,x) \in [0,L]\times \R,\eeq
where $x$ corresponds to the previous time variable, 
while
$t\in [0,L]$ describes points along a highway of total length $L$.
The dependent variable $u=u(t,x)$
 denotes the flux of cars at time $x$ across the point $t$
 on the highway (see Fig.~\ref{f:tf270}).
 
\begin{figure}[htb]
   \centering
 \includegraphics[width=0.7\textwidth]{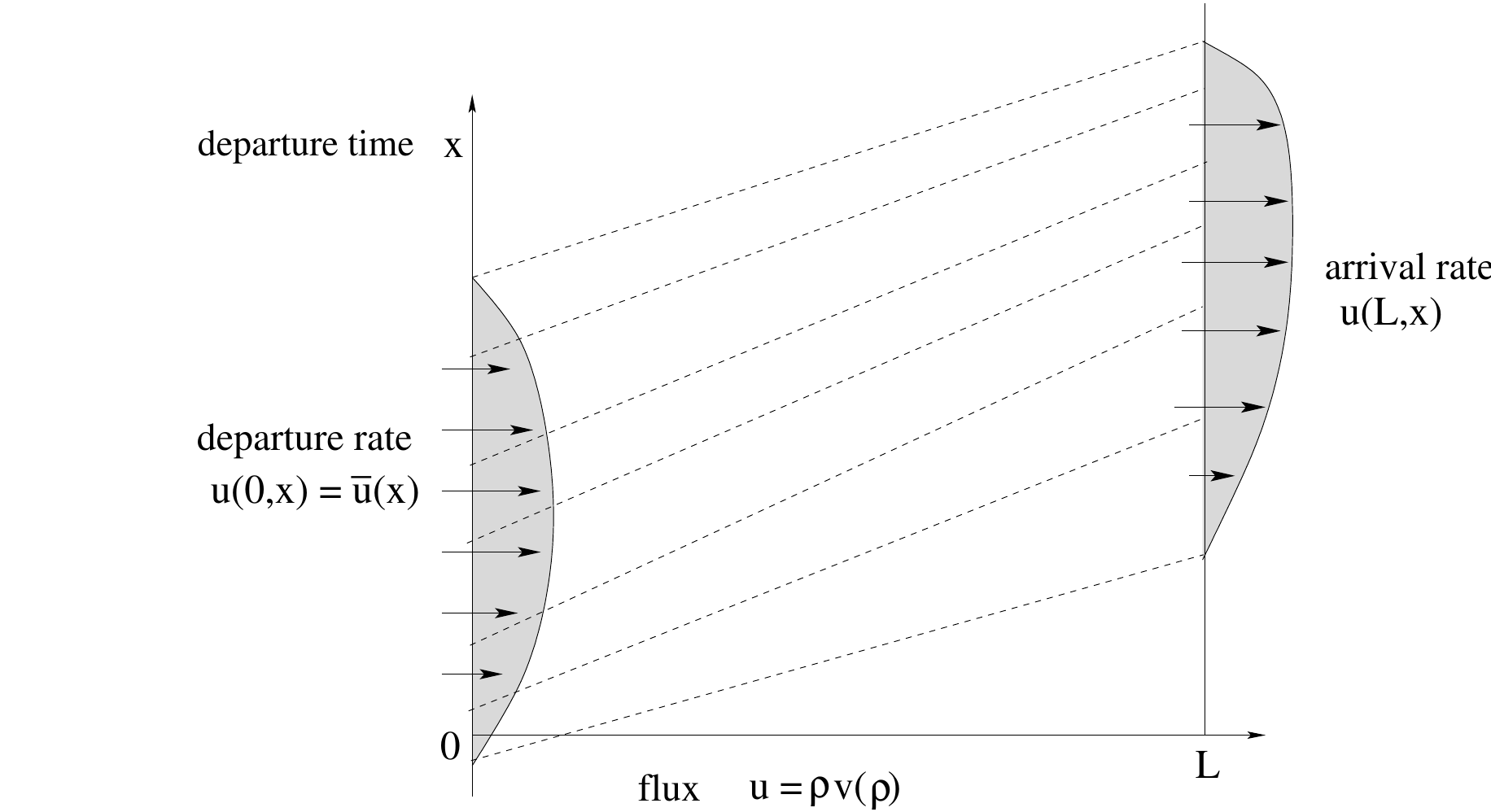}
   \caption{\small A departure rate and the corresponding arrival rate of cars
   traveling along a highway of length $L$. }
   \label{f:tf270}
\end{figure}

 The function $u\mapsto f(u) = \rho$
 is a partial inverse of the smooth, concave function $\rho\mapsto \rho v(\rho) =u$
 (see Fig.~\ref{f:t44}). 
 This is defined for $u\in [0,M]$, where
$$M~=~\max_\rho \,\rho\, v(\rho)$$
is the maximum possible flux for the conservation law (\ref{1}).
Throughout the following we assume
\begin{itemize}
\item[{\bf (A1)}] {\it The flux function $f:[0,M]\mapsto\R$
is continuous, increasing, and strictly convex. It is three times continuously differentiable  on the half-open interval $[0,\,M[\,$
and satisfies
\bel{fprop}
f(0)=0\,,\quad f'(0)\,=\, {1\over v(0)}\,>\,0,\quad f'(M-)=+\infty,\quad
 f''(u)\geq \omega_0 >0\quad \hbox{for}~~0\leq u<M,
\eeq
for some constant $\omega_0$.
}
\end{itemize}
 \begin{figure}[htb]
   \centering
 \includegraphics[width=1.0\textwidth]{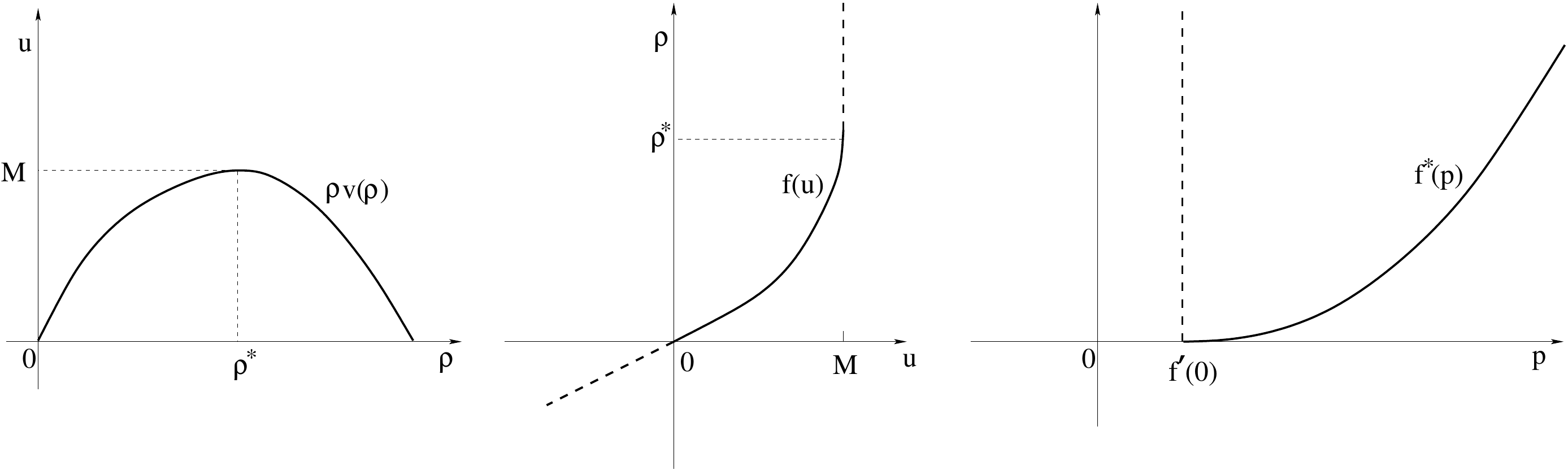}
   \caption{\small  Left: the function $\rho\mapsto
   \rho\,v(\rho)$ describing the flux of cars. Middle:
the  function $f$, implicitly defined
by $f(\rho v(\rho)) = \rho$ for $u\in [0,M]$, and then extended according to (\ref{fext}).
Right: the Legendre transform
   $f^*$ defined at (\ref{f*}).}
   \label{f:t44}
\end{figure}

The equation (\ref{claw1}) is supplemented by an initial condition
\bel{clic}
u(0,x)~=~\bar u(x),\eeq
where $\bar u(x)$ determines the flux of cars arriving at the entrance of the highway at time $x$.

Since the flux cannot exceed $M$, if the boundary data happens to be $\bar u(x)>M$ a queue is formed at the 
entrance of the highway. Cars enter at rate $M$ until the queue is depleted.
The actual solution is found by solving a Cauchy problem with an {\bf effective flux}
$\bar u^e$ defined as follows (see Fig.\ref{f:tf269}).
\bel{eflux}
\bar u^e(x)~\doteq~\left\{\bega{rl}
M\quad &\ds \hbox{if}\quad\int_y^x \bar u(s)\,ds > M(x-y)\quad\hbox{for some}~y<x,\\[2mm]
\bar u(x)&\hbox{otherwise.}\enda\right.\eeq
Notice that the first alternative occurs if the total amount of cars arriving at the entrance of the highway 
during the time interval $[y,x]$ is larger than the maximum possible entering flux. This is when a queue is formed.

\begin{figure}[htb]
   \centering
 \includegraphics[width=0.5\textwidth]{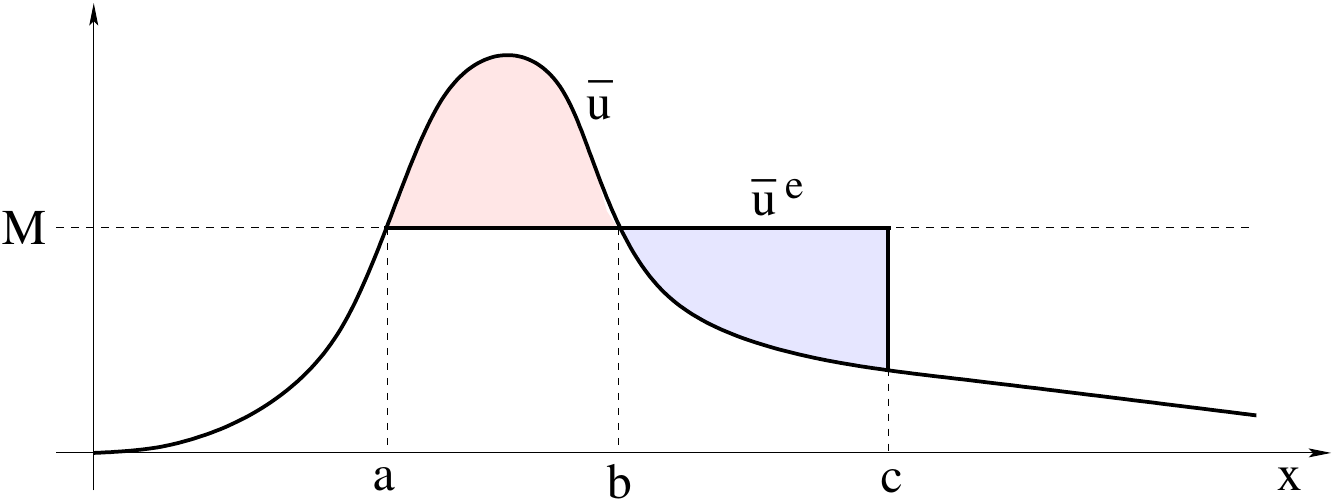}
   \caption{\small Here $\bar u$ is a departure rate, i.e.~the rate at which cars arrive at the entrance of the highway.   Since the flux cannot exceed $M$, the effective flux $\bar u^e$ is a function constantly equal to $M$
   on the interval $[a,c]$.  A queue at the entrance of the highway is formed at time $a$ and depleted at time $c$, 
   so that the two shaded regions have equal areas.}
   \label{f:tf269}
\end{figure}

\begin{lemma} \label{l:31} 
The map $\bar u\mapsto \bar u^e$ defined at (\ref{eflux}) is contractive w.r.t.~the $\L^1$ distance.
\end{lemma}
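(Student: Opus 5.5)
The plan is to prove the statement through the Crandall--Tartar principle: a map on nonnegative $\L^1$ functions that preserves order and preserves total mass is an $\L^1$ contraction. Throughout I take $\bar u\geq 0$ with $\bar u\in\L^1(\R)$, the setting of a departure rate. By Lebesgue's differentiation theorem I first note that $\bar u^e\leq M$ a.e. Indeed, if $x$ is a Lebesgue point with $\bar u(x)>M$, then $\int_y^x\bar u\,ds>M(x-y)$ for $y<x$ close to $x$, so $\bar u^e(x)=M$. Hence $0\leq \bar u^e\leq \max\{\bar u, M\}$ a.e.

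\textbf{Step 1 (monotonicity).} Let $\bar u_1\leq\bar u_2$ a.e., and fix $x$.
\begin{itemize}
\item If the second alternative in (\ref{eflux}) holds for $\bar u_2$, i.e.\ $\int_y^x\bar u_2\leq M(x-y)$ for every $y<x$, then the same holds for $\bar u_1$. Therefore $\bar u_1^e(x)=\bar u_1(x)\leq\bar u_2(x)=\bar u_2^e(x)$.
\item Otherwise $\bar u_2^e(x)=M\geq\bar u_1^e(x)$ by the a.e.\ bound above.
\end{itemize}
So $\bar u_1^e\leq\bar u_2^e$ a.e.

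\textbf{Step 2 (conservation of mass).} I will show $\int\bar u^e\,dx=\int\bar u\,dx$. Let $Q(x)\doteq\sup_{y<x}\int_y^x(\bar u-M)\,ds$, which is the queue length; it is continuous. The set $\{x: Q(x)>0\}$, where the first alternative holds, is open, since $Q$ is continuous and the condition is strict. It is also bounded: because $\bar u\in\L^1$, we have $\int_y^x\bar u\leq\|\bar u\|_{\L^1}<M(x-y)$ once $x-y$ is large, and $\int_y^x\bar u\to 0$ as $x\to+\infty$ uniformly in $y$. Hence this set is a countable union of disjoint intervals $\,]a_k,c_k[\,$. Outside these intervals $\bar u^e=\bar u$, so it suffices to show
\[
\int_{a_k}^{c_k}\bar u\,ds~=~M(c_k-a_k)
\]
on each component. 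This follows from two facts, both obtained by continuity of $Q$ together with a short argument on the supremum:
\begin{itemize}
\item $Q(a_k)=0$ and $Q(c_k)=0$;
\item for $x\in\,]a_k,c_k[\,$ the supremum defining $Q(x)$ is approached by points $y\geq a_k$. If it were approached by $y<a_k$, then $Q(a_k)=0$ would give a contradiction.
\end{itemize}
Consequently $Q(x)=\max_{a_k\leq y\leq x}\int_y^x(\bar u-M)\,ds$ on the component. Letting $x\uparrow c_k$ and using $Q(c_k)=0$, while $\int_{a_k}^x(\bar u-M)\leq Q(x)$ and $\int_{a_k}^{x}(\bar u-M)\geq -\varepsilon$ near $a_k$, I expect to get $\int_{a_k}^{c_k}(\bar u-M)=0$. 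Some care is needed to show this integral is exactly zero rather than only $\leq 0$. The cleanest way is to write $Q(x)=\int_{a_k}^x(\bar u-M)\,ds-\min_{a_k\leq y\leq x}\int_{a_k}^y(\bar u-M)\,ds$ and to observe that the running minimum is attained at $a_k$ when $Q>0$ on the whole interval. This is the step I expect to be the main obstacle: identifying the queue intervals and proving exact mass balance on each. Summing over $k$, which is allowed because everything is integrable, gives mass conservation.

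\textbf{Step 3 (Crandall--Tartar).} Given $\bar u,\bar v$, set $\bar w=\max\{\bar u,\bar v\}$. By Step 1, $\bar w^e\geq\bar u^e$ and $\bar w^e\geq\bar v^e$. Using Step 2 for the last equality,
\[
\int(\bar u^e-\bar v^e)^+dx~\leq~\int(\bar w^e-\bar v^e)\,dx~=~\int(\bar w-\bar v)\,dx~=~\int(\bar u-\bar v)^+dx .
\]
Exchanging the roles of $\bar u$ and $\bar v$ and adding the two inequalities yields $\|\bar u^e-\bar v^e\|_{\L^1}\leq\|\bar u-\bar v\|_{\L^1}$.
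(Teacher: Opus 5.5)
Your proof is correct and follows the same route as the paper: monotonicity of $\bar u\mapsto\bar u^e$ plus conservation of mass, combined through the Crandall--Tartar comparison. The paper uses both $\max\{\bar u,\bar v\}$ and $\min\{\bar u,\bar v\}$ and simply takes $\int\bar u^e\,dx=\int\bar u\,dx$ for granted; your Step~2 actually proves it.

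One correction is needed in Step~2. For general $\bar u\in\L^1$ the set $\{Q>0\}$ need not be bounded: take $\bar u=2M$ on each $[n,n+n^{-3}]$, $n\geq 1$. Also, $\int_y^x\bar u\,ds\to 0$ is not uniform in $y$. However, you only need each component $\,]a_k,c_k[\,$ to be bounded, and this does hold. Write $Q=F-\min_{y\leq\cdot}F$ with $F(x)=\int_0^x(\bar u-M)\,ds$, so that $F\to+\infty$ as $x\to-\infty$ and $F\to-\infty$ as $x\to+\infty$. A component $\,]a,+\infty[\,$ would force $F>F(a)$ on all of $\,]a,+\infty[\,$, which is impossible. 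A component $\,]-\infty,c[\,$ is excluded because the minimum of $F$ over $\,]-\infty,x]$ is attained at some point $y^*\leq x<c$, where $Q(y^*)=0$.
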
 
{\bf Proof.} 
{\bf 1.} We claim  that  the map $\bar u\mapsto \bar u^e$ is monotone:
\bel{mono} \bar u(x)\leq \bar v(x) ~~\forall x\in\R\qquad\implies\qquad  \bar u^e(x)\leq \bar v^e(x) ~~\hbox{for a.e.~} x\in\R.\eeq
Indeed, assume $\bar u\leq \bar v$.   If
\bel{ibig}\int_y^x \bar u(s)\, ds~>~M(x-y)  \qquad\hbox{for some}~y<x,\eeq   then 
$$\int_y^x \bar v(s)\, ds~>~M(x-y)  \qquad\hbox{and hence}\qquad \bar u^e(x)\,=\, \bar v^e(x)\,=\,M.$$  
On the other hand, if $x$ is a Lebesgue point of both $\bar u$ and $\bar v$ and (\ref{ibig}) does not hold, then 
$\bar u^e(x)=\bar u(x)
\leq M$.  In this case,  
$$\bar u^e(x)~\leq~\min\{ \bar u(x), M\}~\leq~\min\{ \bar v(x),M\}~\leq~\bar v^e(x).$$
\v
{\bf 2.} Next, to prove $\L^1$-contractivity, consider the functions
$$\ov w_+(x)\,\doteq\,\max\bigl\{ \bar u(x), \bar v(x)\bigr\},\qquad\qquad 
\ov w_-(x)\,\doteq\,\min\bigl\{ \bar u(x), \bar v(x)\bigr\}.$$
By comparison,
$$\ov w_-^e(x)~\leq~\min\bigl\{ \bar u^e(x), \bar v^e(x)\bigr\}~\leq~\max\bigl\{ \bar u^e(x), \bar v^e(x)\bigr\}~\leq~\ov w_+^e(x)\qquad\quad \forall x\in\R.$$
Therefore
$$\bega{l} \ds \int_{-\infty}^{+\infty} \Big|\bar u^e(x) - \bar v^e(x)\Big|\, dx ~\leq~\int_{-\infty}^{+\infty} \Big(\ov w^e_+(x) - \ov w^e_-(x)\Big)\, dx \\[4mm]
\ds\quad =~
\ds \int_{-\infty}^{+\infty} \Big(\ov w_+(x) \, dx -\ov w_-(x)\Big) \, dx~=~ \int_{-\infty}^{+\infty} \Big|
\bar u(x) -\bar v(x)\Big| \, dx\,.
\enda
$$
\endproof

For $(t,x)\in [0,L]\times \R$, let  $u=u(t,x)$ be the entropy weak solution of the conservation law (\ref{claw1}),
with initial data
\bel{eid}u(0,x)~=~\bar u^e(x),\eeq
where the effective incoming flux was defined at (\ref{eflux}).
We denote by
$$\kappa~=~\int \bar u(x)\, dx~=~\int \bar u^e(x)\, dx$$ the total number of departing drivers. The Lagrangian variable $\beta\in [0,\kappa]$ will be used to 
label one particular driver.   For a.e.~$\beta\in [0,\kappa]$
the departure time and the arrival time of this driver are determined by
\bel{xda}x^d(\beta) ~=~ \inf \left\{ x\,;~\int_{-\infty} ^x \bar u(s)\, ds =\beta\right\},\qquad\qquad
x^a(\beta) ~=~ \inf \left\{ x\,;~\int_{-\infty} ^x  u(L,s)\, ds =\beta\right\}.\eeq
The difference
$$\ell(\beta)~=~x^a(\beta)-x^d(\beta)$$
is the duration of the journey of driver $\beta$.  For a given  $\kappa$, we 
have the a priori bounds
\bel{ellbd}
\ell_0~\leq~\ell(\beta)~\leq~\ell_{max}\,,\eeq
Here $\ell_0$ is the time needed to cover the distance $L$ at maximum speed, i.e.~when $\rho=0$ and 
the road is empty. Namely,
\bel{ell0}\ell_0~\doteq~ {\hbox{[length of the road]}\over \hbox{[maximum speed]}}~=~{L \over v(0)}~=~L f'(0). \eeq
To compute $\ell_{max}$, consider the worst possible scenario, where all drivers start their journey at the same time, say at $x=0$. Then a queue of size $\kappa$ is instantly formed at the entrance of the highway.
The corresponding effective flux is
\bel{worst}\bar u^e(x)~=~\left\{\bega{cl}  M\quad &\hbox{if} ~~x\in [0, \kappa/M],\\[2mm]
0\quad &\hbox{otherwise.}\enda\right.\eeq
With reference to Fig.~\ref{f:t44}, left, let $\rho^*$ be the density of cars corresponding to 
the maximum flux.
We then have the simple estimate
\bel{ellmax}
\ell_{max}~\leq~{\kappa\over M} + {L\over v(\rho^*)}\,.\eeq
Indeed, the last car needs to wait until time $x=\kappa/M$ to clear the queue.
After it starts moving,  since the traffic density is $\rho\leq \rho^*$, the velocity is $\geq v(\rho^*)$.
Hence any car needs at most time $L/v(\rho^*)$ to reach the end of the highway.

Given a departure distribution $\bar u$, the arrival time $y^{\bar u}(x)$ of a driver 
departing at time $x\in\R$ can be determined as
\bel{atime}y^{\bar u}(x)~\doteq~\inf\Big\{ y\geq x+\ell_0\,;~~\int_{-\infty}^y u(L,\xi)\, d\xi~\geq~\int_{-\infty}^x \bar u(\xi)\, d\xi\Big\}.\eeq
Notice that the expression (\ref{atime}) is meaningful also if $\bar u(x)=0$.  If the road is empty, the arrival time is $x+\ell_0$.  Otherwise, the arrival time is the earliest time when all cars starting before time $x$ arrive to destination.
The corresponding duration of the journey is 
\bel{ellbaru} \ell^{\bar u}(x)~\doteq~y^{\bar u}(x)-x.\eeq

\subsection{Representation of solutions by the Lax formula.}

As shown in Fig.~\ref{f:t44}, center,     it is convenient to extend $f$ to a function 
$f:\R\mapsto \R\cup \{+\infty\}$ by setting
\bel{fext}
f(u) ~\doteq~\left\{ \bega{cl} f'(0+) u
\qquad &\hbox{if}~~u<0,  
\cr
+\infty \qquad &\hbox{if}~~u>M.\enda\right.
\eeq
The Legendre transform of this extended flux function (see Fig.~\ref{f:t44}, right) will be denoted by
\bel{f*} f^*(p)~\doteq~ \max_u \bigl\{ pu-f(u)\bigr\}.\eeq
Since $f$ is convex, we can write
\bel{f*2}f^*(p)~=~p u(p) - f\bigl(u(p)\bigr),\eeq
where $u(p)$ is implicitly defined by
\bel{f'p}f'\bigl(u(p)\bigr)~=~p.\eeq
In turn, this implies
\bel{pff}
{\partial  f^*\over \partial p} ~=~u(p) +p\,{\partial u\over\partial p} - f'\bigl(u(p)\bigr) {\partial u\over\partial p} ~=~u(p),\eeq
\bel{ppff}{\partial^2 \over \partial p^2} f^*(p)~=~{\partial \over\partial p} u(p)~=~{1\over f''\bigl(u(p)\bigr)}. \eeq
Note that the last identity is obtained  differentiating (\ref{f'p}) and observing that
$$f''(u(p))~\cdot {\partial u\over\partial p} ~=~1.$$

Next, consider the integrated function
\bel{Udef}U(t,x)~\doteq~\int_{-\infty}^x u(t,s)\, ds\,.\eeq
By definition, $U(t,x) $ describes the total flux through the point $t$ on the highway
during the time interval $]-\infty, x]$.  Equivalently, this is the
 total number of cars that have crossed the point $t\in [0,L]$
before time $x$.

Integrating w.r.t.~$x$,
the conservation law (\ref{claw1}) can be 
equivalently written as a Hamilton-Jacobi equation~\cite{Ca}
\bel{hj}
U_t+ f(U_x)~=~0\,,\eeq
with initial data
\bel{Uin}
U(0,x)~=~\ov U(x)~\doteq~\int_{-\infty}^x \bar u(y)\, dy.\eeq

It is well known that $U=U(t,x)$ corresponds to the value function
for a variational problem. This fact is exploited by the 
Lax formula:
\bel{Lax}
U(t,x)~=~\inf\, \left\{
t\,f^*\left(\frac{x-z}{t}\right)+ \ov U(z)~~;~~~z\in\R
\right\}.\eeq
Knowing $U$, one can then recover the density $u(t,x)=U_x(t,x)$ at a.e.~point.
%

%
%

\begin{figure}[htb]
   \centering
 \includegraphics[width=0.8\textwidth]{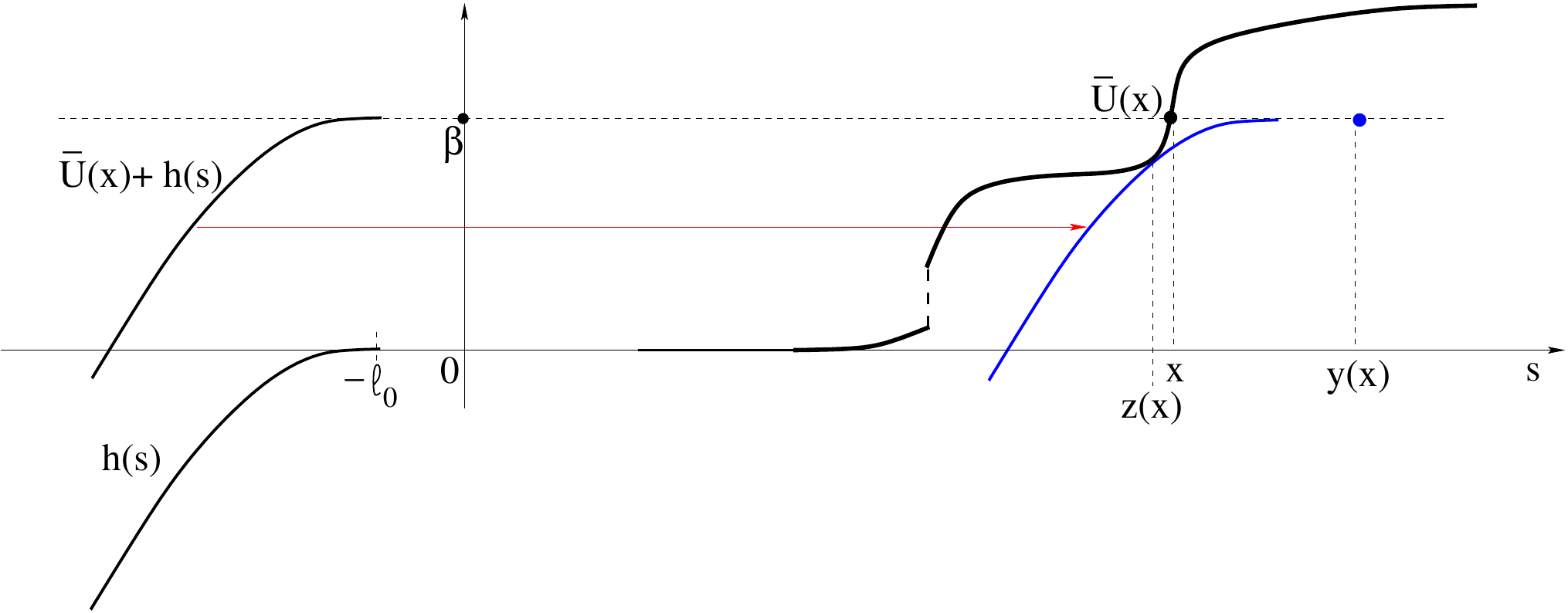}
   \caption{\small   Computing the arrival time $y(x)$ of a car that departs at time $x$.  Here $y(x)$ and $z(x)$
   are determined by the conditions in (\ref{neco}).}
   \label{f:t509}
\end{figure}

Based on the Lax formula, there is an appealing way to construct the arrival time
of each driver, shown in Fig.~\ref{f:t509}. 
Given the departure distribution $\ov U(\cdot)$ as in (\ref{Uin}), 
   let $\beta\in [0, \kappa]$
  label a particular driver. The departure time of this driver is 
\bel{bstart} x^d(\beta)~=~\min\bigl\{ x\,;~~\ov U(x)\geq \beta\bigr\},\eeq
while the arrival time is 
\bel{barrival} x^a(\beta)~=~\min\bigl\{ y\,;~~ U(L,y)\geq \beta\bigr\}.\eeq
Notice that
$$x^a(\beta)\leq y\qquad\iff\qquad U(L,y)\geq \beta.$$
In other words: driver $\beta$ arrives at some time $\leq y$  if and only if at time $y$ the 
number of drivers that have crossed the finish line at $L$ is $\geq \beta$.
Using  (\ref{Lax}) with $t=L$, we find that this is true iff
\bel{atime2}\beta~\leq~U(L,y)~=~\inf\, \left\{
L\,f^*\left(\frac{y-z}{L}\right)+ \ov U(z)~~;~~~z\in\R
\right\},\eeq
where $f^*$ is the Legendre transform of $f$.  
Hence the arrival time is
\bel{arrtime}\bega{rl}
x^a(\beta)&\ds=~\inf\left\{ y\,;~~L\,f^*\left(\frac{y-z}{L}\right)+ \ov U(z)\geq\beta\quad\forall z\in\R\right\}
\\[4mm]
&\ds=~\inf \left\{ y\,;~~\beta - L\,f^*\left(\frac{y-z}{L}\right)\leq \ov U(z)\quad\forall z\in\R\right\}
.\enda\eeq
Consider the function (see again Fig.~\ref{f:t509})
\bel{hdef}h(s)~\doteq~- Lf^*(-s/L).\eeq
Note that 
$$h(s)\nearrow 0\qquad \hbox{as}\qquad s\nearrow -\ell_0~\doteq~-L f'(0).$$

We can now write (\ref{arrtime}) as
\bel{taf1}x^a(\beta)~=~\inf\Big\{ y\,;~\beta + h(z-y)\leq \ov U(z)~~\hbox{for every} ~z\in\R\Big\}.\eeq
As shown in Fig.~\ref{f:t509}, one needs to
shift
the graph of the function $x\mapsto \beta+h(x)$ horizontally, until
it lies entirely below the graph of $\ov U$.  In the figure, the size
of this shift yields precisely the arrival time $y(x)=x^a(\beta)$. 

In the smooth case, if $x=x^d(\beta)$  is the departure time of driver $\beta$, then
the arrival time $y(x)=x^a(\beta)$ and the point $z(x)$, where equality in (\ref{taf1}) is achieved, are determined by the two identities (necessary conditions for
minimality)
\bel{neco} \beta + h(z-y) \,=\,\ov U(z),\qquad\qquad h'(z-y) \,=\,\ov U'(z).\eeq

For future applications, we shall need to compute the arrival time $y^{\bar u}(x)$ of a driver departing at time $x$,
also in the case where $\bar u(x)=0$.    This is achieved by the formula
\bel{artim} y^{\bar u}(x)~=~\inf\Big\{ y\geq x+\ell_0\,;~~\ov U(x)  + h(z-y)\leq \ov U(z)~~
\hbox{for every} ~z\le x\Big\}.\eeq

\begin{figure}[htb]
   \centering
 \includegraphics[width=0.4\textwidth]{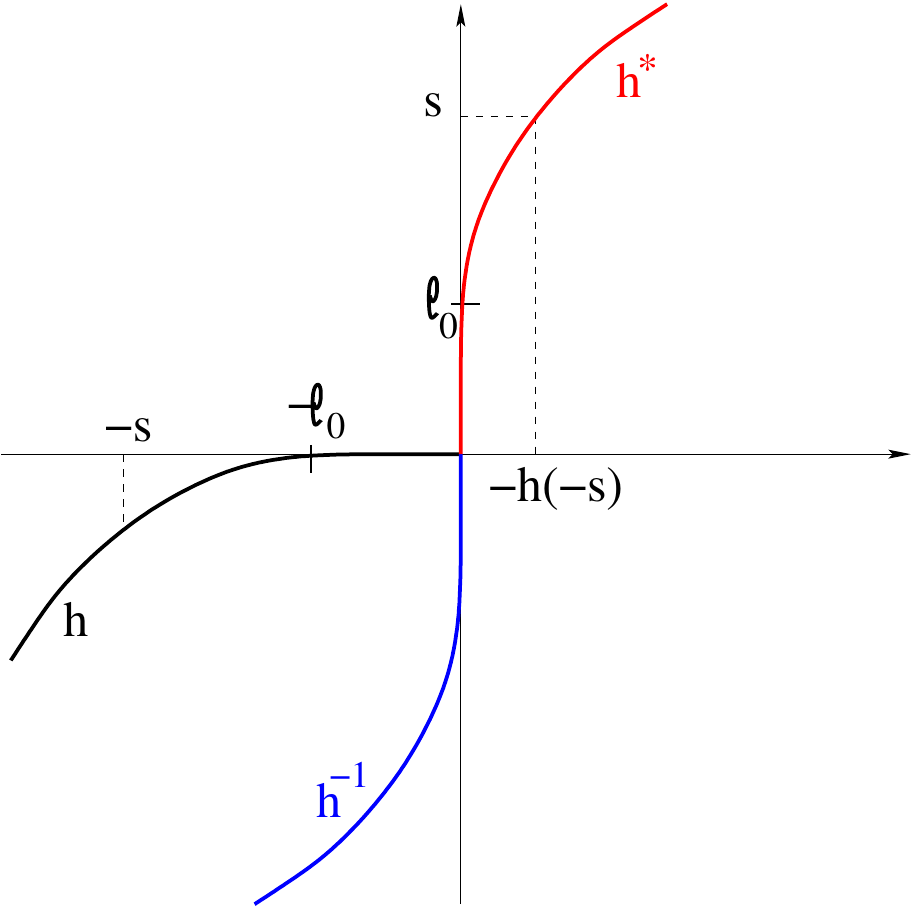}
   \caption{\small   The functions $h$ and $h^*$ defined at (\ref{hdef}) and (\ref{h*}) respectively.}
   \label{f:h99}
\end{figure}

\begin{remark} {\rm The arrival time $x^a(\beta)$ can be also obtained as a sup-convolution.
Namely, consider the inverse function (see Fig.~\ref{f:h99})
\bel{h*}h^*(\zeta)~\doteq~ -h^{-1}(-\zeta), \eeq
so that
\bel{h3}h^*\bigl( - h(-s)\bigr)~=~s\qquad\qquad\forall s\geq 0.\eeq
We then have
\bel{supco}x^a(\beta)\,=\,\sup\Big\{ x^d(\xi) +h^*(\beta-\xi)\,;~~\xi\in[0, \beta]\Big\}.\eeq 
Indeed, in view of (\ref{taf1}) we have the equivalences
$$\bega{rll} y\geq x^a(\beta)\quad &\iff\quad  h(z-y)\,\leq\, \ov U(z)-\beta~~&\hbox{for every} ~z\leq y
\\[2mm]
&\iff\quad  - h(z-y)\,\geq\, \beta -\ov U(z)~~&\hbox{for every} ~z\leq y
\\[2mm]
&\iff\quad  y-z~\geq~ h^*\bigl( \beta- \ov U(z)\bigr)~~&\hbox{for every} ~z\leq y\\[2mm]
&\iff\quad  y~\geq~z+ h^*\bigl( \beta- \ov U(z)\bigr)~~&\hbox{for every} ~z\leq y
\\[2mm]
&\iff\quad  y~\geq~x^d(\xi)+ h^*\bigl( \beta- \xi\bigr)~~&\hbox{for every} ~\xi\leq \beta.
\enda$$
This yields (\ref{supco}).
}
\end{remark}

\section{H\"older continuous dependence of the arrival time function in the Lagrangian formulation}
\label{sec:3}
\setcounter{equation}{0}
As before, we let $\kappa$ be the total number of drivers, while $L$ is the length of the road.
The Lagrangian variable $\beta\in [0, \kappa]$ labels one particular driver.
Calling $x^d(\beta)$ and $x^a(\beta)$ respectively the departure time and the arrival time of
driver $\beta$, we wish to understand the continuity properties of the map $x^d(\cdot)\mapsto x^a(\cdot)$, as a map from $\L^1\bigl([0,\kappa]\bigr)$ into itself.

\begin{theorem}\label{t:31}
Let the flux function $f$ satisfy {\bf (A1)}.  
There exists a constant $\Tilde C$, depending only on $f$ and on the constants $\kappa, L,B>0$, such that the following holds.

For any two nondecreasing departure time functions $x_1^d,x_2^d:[0,\kappa]\mapsto [-B,B]$,
the arrival times $x^a_1,x_2^a$ satisfy
\bel{Hoart}\|x^a_1-x^a_2\|_{\L^1}~ \le~ \Tilde C \|x^d_1-x^d_2\|_{\L^1}^{1/2}.\eeq
\end{theorem}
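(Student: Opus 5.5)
The plan is to work entirely with the sup-convolution representation (\ref{supco}),
$$x^a(\beta)\,=\,\sup\Big\{ x^d(\xi) +h^*(\beta-\xi)\,;~~\xi\in[0, \beta]\Big\},$$
and to first reduce to ordered data. The map $x^d\mapsto x^a$ defined by (\ref{supco}) is clearly monotone: $x^d_1\le x^d_2$ pointwise implies $x^a_1\le x^a_2$. Set $w_+=\max\{x_1^d,x_2^d\}$ and $w_-=\min\{x_1^d,x_2^d\}$. Both are nondecreasing with values in $[-B,B]$, and $\|w_+-w_-\|_{\L^1}=\|x^d_1-x^d_2\|_{\L^1}$. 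Exactly as in Lemma~\ref{l:31}, the corresponding arrival times satisfy $w_-^a\le \min\{x_1^a,x_2^a\}\le\max\{x_1^a,x_2^a\}\le w_+^a$. Hence $\|x^a_1-x^a_2\|_{\L^1}\le \int_0^\kappa (w^a_+-w^a_-)\,d\beta$. It therefore suffices to prove (\ref{Hoart}) under the extra assumption $x^d_1\le x^d_2$. Let $\delta=\|x^d_1-x^d_2\|_{\L^1}$ and $\eta=\sqrt\delta$. If $\eta\ge\kappa$, the estimate follows trivially from an $\L^\infty$ bound, described in the third paragraph.

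The key step converts the $\L^1$ closeness of two monotone functions into a shifted pointwise inequality. I claim that
$$x^d_2(\xi)\,\le\, x^d_1(\xi+\eta)+\delta/\eta\qquad\hbox{for all}~\xi\in[0,\kappa-\eta].$$
Suppose instead that $x_2^d(\xi)>x_1^d(\xi+\eta)+\ve$ for some $\ve>0$. By monotonicity, for every $s\in[\xi,\xi+\eta]$ we have $x^d_2(s)\ge x^d_2(\xi)>x^d_1(\xi+\eta)+\ve\ge x^d_1(s)+\ve$. Integrating over this interval gives $\delta\ge\ve\eta$, so necessarily $\ve<\delta/\eta=\sqrt\delta$. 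Near the right endpoint, for $\xi>\kappa-\eta$, the shifted argument leaves the domain. There I extend $x_1^d$ by the value $B$ on $]\kappa,\kappa+\eta]$, so the same inequality holds trivially because $x_2^d\le B$.

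Inserting this into (\ref{supco}) gives, for $\beta\in[0,\kappa-\eta]$,
$$x_2^a(\beta)\le \sup_{\xi\le\beta}\big\{x_1^d(\xi+\eta)+h^*(\beta+\eta-(\xi+\eta))\big\}+\sqrt\delta\le x_1^a(\beta+\eta)+\sqrt\delta .$$
In the last step I substituted $\xi'=\xi+\eta\le\beta+\eta\le\kappa$ and enlarged the set over which the sup is taken. Since $x^a_1$ is nondecreasing, a telescoping estimate yields
$$\int_0^{\kappa-\eta}\big(x^a_2-x^a_1\big)\,d\beta\le \int_0^{\kappa-\eta}\big(x^a_1(\beta+\eta)-x^a_1(\beta)\big)d\beta+\kappa\sqrt\delta\le \eta\,\big(\sup x_1^a-\inf x_1^a\big)+\kappa\sqrt\delta .$$
On the remaining interval $[\kappa-\eta,\kappa]$ I use the a priori bound $-B+\ell_0\le x^a_i\le B+h^*(\kappa)$, which follows from (\ref{supco}), the monotonicity of $h^*$, and $h^*(0)=\ell_0$. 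This bounds the contribution of that interval by $\eta\,(2B+h^*(\kappa))$. The same bound handles the trivial case $\eta\ge\kappa$, since then the full integral is at most $\kappa\,(2B+h^*(\kappa))\le \eta\,(2B+h^*(\kappa))$. Altogether we obtain (\ref{Hoart}) with $\Tilde C= 2\big(2B+h^*(\kappa)\big)+\kappa$, which depends only on $f,\kappa,L,B$ through $h^*$.

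The main point requiring care is the shift lemma together with the boundary bookkeeping. One must extend $x^d_1$ beyond $\kappa$ consistently and check that enlarging the sup in (\ref{supco}) from $[0,\beta+\eta]\cap[0,\kappa]$ to the extended function does not break the inequality; restricting to $\beta\le\kappa-\eta$ avoids this. It is also worth remarking why the exponent $1/2$ appears: a vertical error of size $\delta/\eta$ is traded against a horizontal shift of size $\eta$, and optimizing gives $\eta=\sqrt\delta$. I do not expect to need finer properties of $h^*$ beyond its monotonicity and finiteness on $[0,\kappa]$.
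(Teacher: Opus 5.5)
Your proposal is correct and follows essentially the same route as the paper, which proves Lemma~\ref{p:LagrHolder} and applies it with $\ell=h^*$: ordering the data by comparison, a shifted pointwise bound derived from monotonicity and $\L^1$ closeness, passing that bound through the sup-convolution, and a telescoping estimate with a shift of order $\sqrt{\delta}$. The only differences are cosmetic: the paper takes the shift $\sqrt{\kappa\|\phi_1-\phi_2\|_{\L^1}/(A+4B)}$, which is automatically $\le\kappa$, whereas you take $\eta=\sqrt{\delta}$ and treat the case $\eta\ge\kappa$ separately.
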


In view of the formula (\ref{supco}) for the arrival time function, we first prove a general lemma on the H\"older continuity 
of the sup-convolution operator.

Given constants $\kappa, B$,
let $\F$ be the family of all nondecreasing functions $\phi:[0,\kappa]\mapsto [-B,B]$.
For $\phi\in \F$, consider the function $\psi = \Psi(\phi)$, defined by
\bel{psi1}
\psi(x)~=~\sup_{0\leq y\leq x} \phi(y)+\ell(x-y).\eeq
The following properties follow immediately from the definition:

{\bf 1 - Comparison:}
\bel{comp}\phi_1~\leq~\phi_2\qquad\implies\qquad\psi_1\leq\psi_2\,.\eeq

{\bf 2 - Contractivity in $\L^\infty\bigl([0,\kappa]\bigr)$:}
\bel{cont} \|\psi_1-\psi_2\|_{\L^\infty}~\leq~\|\phi_1-\phi_2\|_{\L^\infty}\,.\eeq
\v
Next, we seek continuity estimate of the map $\Psi:\phi\mapsto \psi$ in terms of the $\L^1$ 
distance.
\v
\begin{example}\label{e:11} {\rm  To see that the map $\phi\mapsto\psi$ may not be Lipschitz continuous 
from
$\L^1\bigl([0,\kappa]\bigr)$ into itself, take $\kappa=1$, $\ell(s) = \sqrt s$. As shown in Fig.~\ref{f:h97},
consider the two functions
\bel{ex1}\phi_1(x)\,=\,0,\qquad\phi_{2}(x)~\doteq~\left\{\bega{cl}
-\ve\quad &\hbox{if}~~x\in [0, 2\ve[\,,\\[1mm]
0\quad &\hbox{if}~~x\in [2\ve,1].
\enda\right.\eeq
We then have
$$\psi_1(x)\,=\,\sqrt x\,,~~\qquad \psi_2(x)\,=\, \max\bigl\{ \sqrt x-\ve, ~\sqrt{x-2\ve}\bigr\}\,=\,\sqrt x - \ve
\qquad \forall x\in [0,1].$$
Hence
$$\|\phi_1-\phi_2\|_{\L^1}\,=\,2\ve^2,\qquad\qquad\|\psi_1-\psi_2\|_{\L^1}\,=~\ve.$$
}
\end{example}
\v

\begin{figure}[ht]
\centerline{\hbox{\includegraphics[width=4cm]{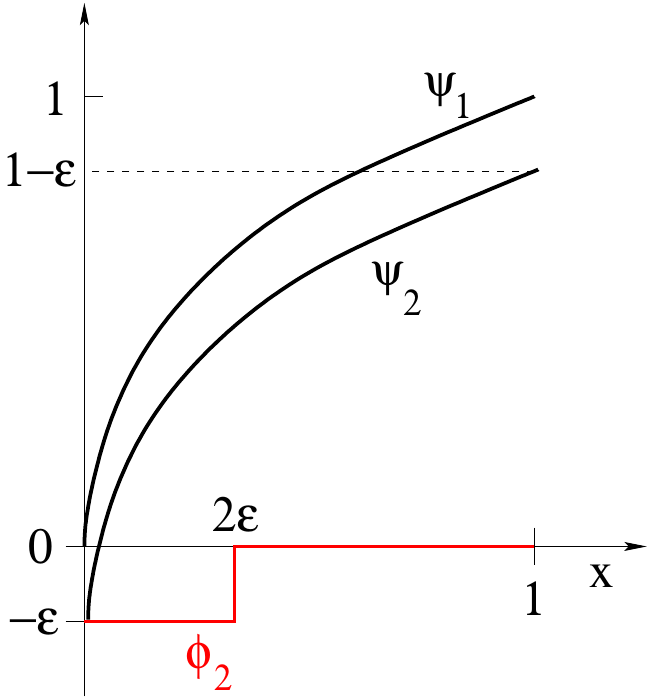}}}
\caption{\small The functions $\phi_1=0, \phi_2$ and $\psi_1,\psi_2$, considered in Example~\ref{e:11}. }
\label{f:h97}
\end{figure}

The following lemma shows the map $\Psi:\phi\mapsto \psi$ is in fact H\"older continuous with exponent $\alpha=1/2$.

\begin{lemma}\label{p:LagrHolder}
Let $\kappa,A,B>0$ and $\ell:[0,\kappa] \mapsto [0,A]$ and  $\phi_1,\phi_2:[0,\kappa]\mapsto[-B,B]$ be nondecreasing functions. Define the corresponding sup-convolutions $\psi_1,\psi_2$  as in (\ref{psi1}).

Then 
\bel{21}
\|\psi_1-\psi_2\|_{\L^1}^2 ~\le ~4\kappa(A+4B) \|\phi_1-\phi_2\|_{\L^1}.
\eeq
\end{lemma}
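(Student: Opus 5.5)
By comparison \eqref{comp}, I will first reduce to the ordered case. Set $\ov w_+=\max\{\phi_1,\phi_2\}$ and $\ov w_-=\min\{\phi_1,\phi_2\}$. Both functions are nondecreasing with values in $[-B,B]$. Exactly as in Step~2 of the proof of Lemma~\ref{l:31}, one has $\Psi(\ov w_-)\le\psi_i\le\Psi(\ov w_+)$, so that $|\psi_1-\psi_2|\le \Psi(\ov w_+)-\Psi(\ov w_-)$. Moreover $\|\ov w_+-\ov w_-\|_{\L^1}=\|\phi_1-\phi_2\|_{\L^1}$. Hence it suffices to treat $\phi_1\le\phi_2$. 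Write $g\doteq\phi_2-\phi_1\ge0$, $\delta\doteq\int_0^\kappa g$, and $d\doteq\psi_2-\psi_1\ge0$.

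Two elementary facts will be used throughout.
\begin{itemize}
\item[(i)] Each $\psi_i$ is nondecreasing. This holds because the set $[0,x]$ grows with $x$ and $\ell$ is nondecreasing. Also $\psi_i$ takes values in $[-B,B+A]$, so its total increase on $[0,\kappa]$ is at most $A+2B$.
\item[(ii)] A pointwise bound. Fix $x$ and let $y=y(x)\in[0,x]$ be an (almost) maximizer for $\psi_2(x)$. For every $y'\in[0,y]$ we have $\ell(x-y')\ge \ell(x-y)$, hence $\psi_1(x)\ge\phi_1(y')+\ell(x-y)$. Therefore
\[
d(x)~\le~\phi_2(y)-\phi_1(y')~=~\bigl(\phi_2(y)-\phi_2(y')\bigr)+g(y')\qquad\hbox{for all } y'\le y .
\]
Averaging over $y'\in[y-r,y]$ gives
\[
d(x)~\le~\bigl(\phi_2(y)-\phi_2(y-r)\bigr)+{1\over r}\int_{y-r}^y g .
\]
\end{itemize}

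The plan is to integrate the averaged inequality in (ii) over $x\in[0,\kappa]$. The difficulty is that the map $x\mapsto y(x)$ need not be injective or measure-preserving. I would choose $y(x)$ as the largest maximizer, so that $x\mapsto y(x)$ is nondecreasing. Then the intervals $[y(x)-r,y(x)]$ overlap in a controlled way, and I can try to use a layer-cake argument instead of a change of variables. Concretely, fix a level $\eta>0$ and consider $E_\eta=\{x: d(x)>\eta\}$. On $E_\eta$, (ii) shows that either $\phi_2$ increases by at least $\eta/2$ on $[y-r,y]$, or the average of $g$ on $[y-r,y]$ exceeds $\eta/2$.

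The first alternative can occur only on a set of $x$ whose images $y(x)$ cluster near at most $4B/\eta$ jump locations. Combined with (i), the measure of such $x$ should be at most of order $r\cdot(2B)/\eta$ plus a contribution controlled by the oscillation of $\psi$ (this is where $A$ enters). For the second alternative, a Vitali/rising-sun covering bounds the measure of the relevant $y$'s by $4\delta/(\eta r)$.

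Integrating in $\eta$, or simply using $\int d\le \kappa\,\eta+\hbox{(bound)}\cdot\sup d$ with $\sup d\le A+2B$ and optimizing in $r$ and $\eta$, should give
\[
\int d~\lesssim~\sqrt{\kappa(A+4B)\delta}.
\]
The quadratic relation between the optimized parameters is what produces the exponent $1/2$, as Example~\ref{e:11} shows it must. The main obstacle is making precise the transfer from the measure of $y$-sets to the measure of $x$-sets through the nonincreasing correspondence $x\mapsto y(x)$. If this becomes messy, I would instead swap the roles of the axes: work with the generalized inverses of the nondecreasing functions, where the sup-convolution becomes an inf-type operation, and estimate areas between graphs directly. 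Since $\L^1$ distances between monotone functions equal areas between their graphs, this reformulation avoids the change-of-variables issue and should yield the constant $4\kappa(A+4B)$.
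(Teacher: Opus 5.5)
Your reduction to the ordered case and the pointwise bound (ii) are correct, but the proposal stops exactly where the work is. Nothing is actually proved about $\int_0^\kappa d\,dx$. Both terms on the right of (ii) are evaluated at the maximizer $y(x)$, not at $x$. Your layer-cake/Vitali sketch only controls the measure of sets of $y$'s, and the passage to sets of $x$'s is left as ``should be at most of order''. The fallback through generalized inverses is a direction, not an argument.

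This transfer cannot be done by counting $y$'s. Take Example~\ref{e:11} in your notation: $\phi_2\equiv 0$, $\phi_1=-\varepsilon$ on $[0,2\varepsilon[\,$, and $\ell(s)=\sqrt s$. Then $\psi_2(x)=\sqrt x$ has the unique maximizer $y(x)=0$ for every $x$, so $d\equiv\varepsilon$ on all of $[0,1]$. Yet the whole set $E_\eta$ is sent to a single point, and the averaging window $[y-r,y]$ does not even lie in $[0,\kappa]$.

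Separately, your claim that the largest maximizer is nondecreasing in $x$ needs $\ell$ concave, and the lemma does not assume this. With $\ell(s)=s^2$, $\phi=0$ on $[0,1[\,$ and $\phi=2$ on $[1,3]$, the maximizer jumps down from $1$ to $0$ at $x=3/2$.

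The missing idea is to shift to the \emph{right}, globally, and let the translation invariance of the kernel $\ell(x-y)$ move the shift onto $x$.

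\textbf{Step 1.} Since both $\phi_i$ are nondecreasing, for $s+r\le\kappa$ you have $\phi_2(s)-\phi_1(s+r)\le g(t)$ for every $t\in[s,s+r]$. Averaging gives $\phi_2(s)\le\phi_1(s+r)+\delta/r$.

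\textbf{Step 2.} Consequently, for $x\le\kappa-r$,
\[
\psi_2(x)~\le~\sup_{0\le y\le x}\Big\{\phi_1(y+r)+\ell\bigl((x+r)-(y+r)\bigr)\Big\}+{\delta\over r}~\le~\psi_1(x+r)+{\delta\over r}\,,
\]
so $d(x)\le\psi_1(x+r)-\psi_1(x)+\delta/r$. The increment now sits at $x$ itself, and no maximizer appears.

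\textbf{Step 3.} By the oscillation bound in your (i), $\int_0^{\kappa-r}\bigl[\psi_1(x+r)-\psi_1(x)\bigr]\,dx\le r\,(A+2B)$. On $[\kappa-r,\kappa]$ you have $0\le d\le 2B$. Together these give $\int_0^\kappa d\le (A+4B)r+\kappa\delta/r$.

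\textbf{Step 4.} Choose $r=\sqrt{\kappa\delta/(A+4B)}$, which is admissible because $\delta\le 2B\kappa$ forces $r\le\kappa$. This gives exactly (\ref{21}), and it is the paper's argument.

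Your (ii) averages on the wrong side. Comparing $\phi_2(y)$ with $\phi_1$ to the \emph{left} of $y$ is what forces the extra, uncontrolled increment of $\phi_2$ at $y(x)$. Comparing with $\phi_1$ to the right needs no increment term at all, nor any monotonicity of $\ell$.
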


{\bf Proof.}
By the comparison property (\ref{comp}) it is not restrictive to assume $\phi_2\le \phi_1$. 

Since $\phi_1$ and $\phi_2$ are nondecreasing, for $0<\delta\le\kappa$ and $s+\delta\le\kappa$, it holds
\[
\phi_1(s)-\phi_2(s+\delta)\le \frac1\delta \int_s^{s+\delta}(\phi_1(x)-\phi_2(x))dx \le \frac1\delta \|\phi_1-\phi_2\|_{\L^1}.
\]

Then for $x\le\kappa-\delta$, it holds
\begin{equation*}
\begin{split}
\psi_1(x) \le &~ \sup_{0\le y\le x} \{ \phi_2(y+\delta) + \ell(x-y) \} + \frac1\delta \|\phi_1-\phi_2\|_{\L^1} \\
\le &~ \sup_{\delta\le y\le x+\delta} \{ \phi_2(y) + \ell(x+\delta-y) \}+ \frac1\delta \|\phi_1-\phi_2\|_{\L^1} \\
\le &~ \psi_2(x+\delta)+ \frac1\delta \|\phi_1-\phi_2\|_{\L^1}.
\end{split}
\end{equation*}

The function $\psi_2$ is nondecreasing and has oscillation at most
$A+2B$. Also $0\le\psi_1-\psi_2\le 2B$, by sup-norm contractivity. Therefore
\begin{align*}
\int_0^\kappa(\psi_1-\psi_2)\,dx
&\le\int_0^{\kappa-\delta}\bigl[\psi_2(x+\delta)-\psi_2(x)\bigr]\,dx
+\frac{\kappa }{\delta}\|\phi_1-\phi_2\|_{\L^1}+2B\delta\\
&\le (A+4B)\delta+\frac{\kappa }{\delta}\|\phi_1-\phi_2\|_{\L^1}.
\end{align*}
Let $\delta=\sqrt{\kappa\|\phi_1-\phi_2\|_{\L^1}/(A+4B)}$. It holds $\delta \le \kappa$ because $\|\phi_1-\phi_2\|_{\L^1}\le 2B\kappa$ and therefore the result follows. 
\endproof

\v
{\bf Proof of Theorem~\ref{t:31}.} In view of the sup-convolution formula (\ref{supco}) for the arrival time, 
it suffices to apply Lemma~\ref{p:LagrHolder} with $\ell (s) \doteq h^*(s)\doteq -h^{-1}(-s)$ (see Fig.~\ref{f:h99}).
\endproof

\v
\section{Lipschitz continuous dependence of the arrival time function in the Eulerian formulation}
\label{sec:4}
\setcounter{equation}{0}
For future use, in this section we prove the Lipschitz continuous dependence  of the arrival time  in terms of the 
departure rate.  Namely  the map $\bar u(\cdot)\mapsto y^{\bar u}(\cdot)$ in (\ref{artim}) is Lipschitz continuous w.r.t.~the $\L^1$ distance.  Note that here we look at the rates of departures and arrivals
as functions of $x\in \R$.

\begin{theorem} \label{t:41} 
Given any $K>0$, there exists a Lipschitz constant $L_K$, depending only on $K$
and on the flux function $f$, such that
\bel{Liparr}
\bigl\| y^{\bar u} - y^{\bar v}\bigr\|_{\L^1(\R)} 
~\leq~L_K \bigl\| \bar u - \bar v\bigr\|_{\L^1(\R)} \eeq
for every couple of non-negative departure rates such that $\|\bar u\|_{\L^1}\leq K$, $\|\bar v\|_{\L^1}\leq K$.
\end{theorem}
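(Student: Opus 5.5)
Following the strategy announced in the Introduction, I would prove (\ref{Liparr}) first for a dense class of piecewise constant departure rates, using a homotopy argument, and then pass to general $\bar u,\bar v$ by continuity.

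\textbf{Step 1: reductions.} By Lemma~\ref{l:31} and the monotonicity (\ref{mono}), and since the arrival time (\ref{artim}) depends only on $\ov U$ and is monotone in $\bar u$, one can use the max/min trick of Lemma~\ref{l:31}. Set $\ov w_\pm=\max/\min\{\bar u,\bar v\}$. Adding cars never makes anyone arrive earlier, so $y^{\ov w_-}\le y^{\bar u},y^{\bar v}\le y^{\ov w_+}$. It therefore suffices to treat ordered pairs $\bar u\le \bar v$ with $\|\bar v-\bar u\|_{\L^1}$ given.

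Joining them by the path $\bar u_\theta=\bar u+\theta(\bar v-\bar u)$, $\theta\in[0,1]$, the problem reduces to a differential estimate. One needs
$$\int_\R \frac{d}{d\theta}y^{\bar u_\theta}(x)\,dx~\le~L_K\int_\R(\bar v-\bar u)\,dx$$
for a.e.~$\theta$.

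\textbf{Step 2: the piecewise constant case.} For piecewise constant $\bar u_\theta$ with finitely many jumps, the solution of (\ref{claw1}) on $[0,L]$ has finitely many interacting waves. Alternatively, for each $x$ one can use (\ref{artim}): $y^{\bar u_\theta}(x)$ is determined by the tangency condition between $\ov U_\theta(x)+h(\cdot-y)$ and $\ov U_\theta$ at a contact point $z(x)\le x$.

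Perturbing $\ov U$ by $\delta\ov U(s)=\theta'\int_{-\infty}^s(\bar v-\bar u)$, the first-order variation at contact is
$$h'(z-y)\,\delta y~=~\delta\ov U(x)-\delta\ov U(z)~=~\theta'\int_z^x(\bar v-\bar u)\,ds.$$
Here $h'(z-y)=\ov U'(z)\ge$ a positive bound, or else one must handle the boundary case $y=x+\ell_0$.

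Then I integrate over $x$ and use Fubini. The contribution of a mass element at $s$ is $\int \mathbf 1_{\{z(x)\le s\le x\}}/h'(z(x)-y(x))\,dx$. The contact point $z(x)$ satisfies $x-z(x)\le \ell_{max}$, which is bounded by (\ref{ellmax}) in terms of $K$. So each mass element affects arrival times only for $x\in[s,s+C_K]$, with derivative bounded by $1/\min h'$ over the relevant range.

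\textbf{Main obstacle.} The delicate point is the lower bound on $h'(z-y)$, i.e.~the slope $\bar u$ at contact. This slope is the traffic flux, which may be near $0$, and there a tiny mass change can shift arrival times a lot; this is exactly the H\"older phenomenon of Example~\ref{e:11}.

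I expect to overcome this with the following observation. When the contact slope is small, the driver's travel time $y-x$ is close to $\ell_0$. The perturbation of $y$ is then limited by the constraint $y\ge x+\ell_0$ and by the contact geometry: a perturbation of mass $m$ moves $y$ by at most $\sim m/h'$ only over an $x$-interval whose length is itself $\lesssim m/h'$ times something. This should yield an $\L^1$ bound linear in $m$, even though the pointwise bound degenerates. Quantifying this, probably using $h''\ge c>0$ from $f''\ge\omega_0$ through (\ref{ppff}), is where I would concentrate effort. A cruder alternative is to compare directly the areas between the graphs of $x\mapsto y^{\bar u}(x)$ and $x\mapsto y^{\bar v}(x)$ via the arrival cumulative $U(L,\cdot)$. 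Note $\int|y^{\bar u}-y^{\bar v}|\,dx$ relates, by swapping axes, to the $\L^1$ distance between $U_{\bar u}(L,\cdot)$ and $U_{\bar v}(L,\cdot)$ composed with $\ov U$, which is controlled by $L^1$-contraction of the Hamilton–Jacobi semigroup in $U$.

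\textbf{Step 3: density.} Finally I approximate general $\bar u,\bar v$ with $\L^1$ norm $\le K$ and bounded support by piecewise constant ones in $\L^1$. Lower semicontinuity, or continuity, of $\bar u\mapsto y^{\bar u}$ in $\L^1_{loc}$ follows from Theorem~\ref{t:31}, or directly from (\ref{artim}) since $\ov U$ converges uniformly. Together with Fatou's lemma this extends (\ref{Liparr}).
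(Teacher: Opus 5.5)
There is a genuine gap: the kernel bound that carries the whole theorem is asserted, not proved. Your reductions are sound: ordering via $\ov w_\pm$ and the monotonicity of $\bar u\mapsto y^{\bar u}$, the first-order relation $h'(z-y)\,\delta y=\int_z^x\delta\bar u$, the Fubini rewriting, and the density/Fatou step. But the proof of (\ref{Liparr}) consists essentially of the step you leave open, namely a bound, uniform over all data with $\L^1$ norm $\le K$, on
\[
\sup_s\int_{\{x:\ z(x)\le s\le x\}}\frac{dx}{h'\bigl(z(x)-y(x)\bigr)}\,.
\]
You only say this ``should'' hold, and your heuristic misses the mechanism. The constraint $y\ge x+\ell_0$ is irrelevant, since the perturbation pushes $y$ up. A displacement $\sim m/h'$ over an interval of length $\sim m/h'$ is quadratic in $m$, not linear. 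What actually keeps the kernel bounded is that the affected $x$-set is controlled by the contact slope itself:
\begin{itemize}
\item for an interior contact on a piece where $\bar u=\sigma$, one has $x-z(x)\le\Delta_\sigma=\O(\sigma)$, against a weight $1/\sigma$;
\item for a contact at an upward corner $a$, the contact slope grows only like $\sqrt{\sigma(x-a)}$, and you must show this is integrable with a bound independent of $\sigma$.
\end{itemize}
Your fallback fails too. Read the mechanism of Example~\ref{e:11} in integrated variables, using that the area between the graphs of two monotone functions equals the area between their inverses. It gives data with $\|\ov U_1-\ov U_2\|_{\L^1}=\O(\ve^2)$ but $\|U_1(L,\cdot)-U_2(L,\cdot)\|_{\L^1}$ of order $\ve$. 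So the Hamilton--Jacobi solution map is not even $\L^1$-Lipschitz in $U$, let alone contractive. Finally, integrating $\partial_\theta y^{\bar u_\theta}(x)$ in $\theta$ requires absolute continuity in $\theta$, which you do not address. For a monotone function, the integral of the a.e.\ derivative only bounds the increment from below.

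The paper avoids the homotopy and works with finite differences. It first reduces to $\bar u\le\bar v$ differing only on one interval $[a,b]$ where both are constant. Then, from $g(0)=g'(0)=0$ and $g''\le -c_K$ on $[-C_K,0]$, the secant slope of $g$ from $z^{\bar v}(x)-\bfy^{\bar v}(x)$ to $0$ is at least $\frac{c_K}{2}\bigl(\bfy^{\bar v}(x)-z^{\bar v}(x)\bigr)$. This yields (\ref{eLip}), replacing the possibly vanishing contact slope by the lag $\bfy^{\bar v}-z^{\bar v}$. The core of the proof is the uniform bound $\int dx/\bigl(\bfy^{\bar v}(x)-z^{\bar v}(x)\bigr)=\O(1)$ on the affected set, obtained by splitting into three pieces:
\begin{itemize}
\item On $[a,a^*]$, $\bfy^{\bar v}(x)-a\gtrsim\sqrt{\ov V(x)-V^*}\gtrsim\sqrt{\sigma^*(x-a)}$, while $a^*-a\le\sigma^*/c_K$. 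So the $1/\sqrt{x-a}$ singularity integrates to a constant independent of $\sigma^*$; this is exactly the compensation your sketch lacks.
\item On $[a^*,b']$, explicit formulas in terms of $\Delta_{\sigma_u},\Delta_{\sigma_v}$ are compared using the Lipschitz bound on $(g')^{-1}$.
\item On the last piece, of length at most $\Delta_{\sigma_v}$, the weight is exactly $1/\Delta_{\sigma_v}$.
\end{itemize}
Your kernel formulation could plausibly be completed along the same lines, but as written the decisive estimate is missing.
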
 
{\bf Proof.} 
{\bf 1.} 
By a density argument it suffices to prove the inequality (\ref{Liparr}) in the case where
\begi
\item[(i)]
$\bar u, \bar v$ are both piecewise constant, say with jumps at the points $x_0<x_1<\cdots< x_n$. Namely,
\bel{ubi}
\bar u(x)\,=\,u_i,\qquad \bar v(x)\,=\, v_i\qquad\quad \hbox{for} ~x_{i-1}<x< x_i\,.\eeq
\item[(ii)]  
$\bar v(x)=\bar u(x)$ for all $x\notin [a,b]$, where $[a,b]=[x_{k-1}, x_k]$ is one of the intervals where $\bar u $ and $\bar v$ are constant.  For $x\in [a,b]$, 
the constant values of  $\bar u(x)$ and $\bar v(x)$ will be denoted by 
$ \sigma_u\doteq u_k<\sigma_v\doteq v_k$.   
\endi
To justify this reduction we observe that, given any two piecewise constant functions $\bar u, \bar v$, one can construct a chain of finitely many
piecewise constant functions $\bar u_j$, $j=0,1,\ldots, N$ such that (see Fig.~\ref{f:t513})

\begin{figure}[htb]
   \centering
 \includegraphics[width=0.5\textwidth]{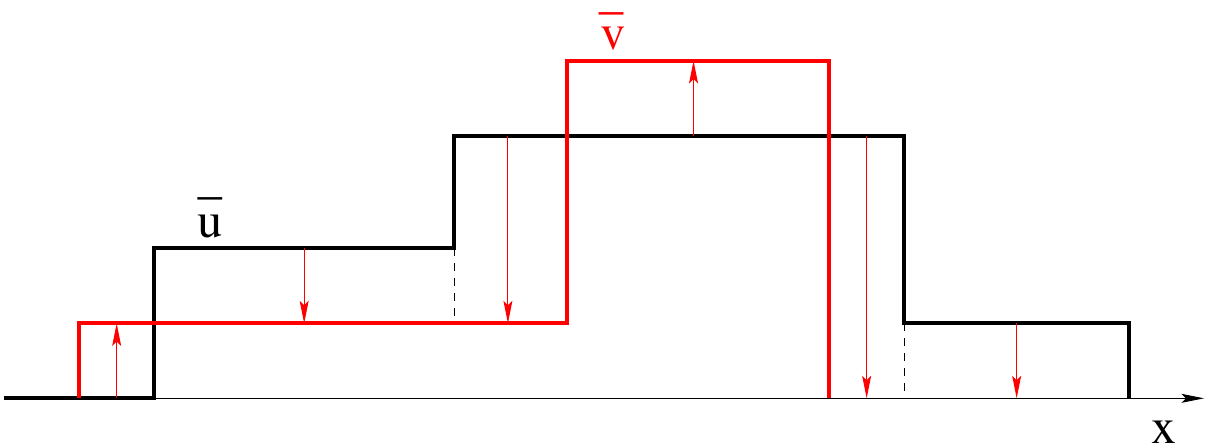}
   \caption{\small The departure rate $\bar u$ can be transformed into $\bar v$, in 6 elementary steps.}
   \label{f:t513}
\end{figure}

\begi
\item $\bar u_{j-1} (x)=\bar u_j(x)$ for all $x\notin [a,b]$, where $[a,b]$ is an interval where $\bar u_{j} $ is constant.  
\item We have
\bel{uj1} \bar u_0= \bar u, \qquad \bar u_N = \bar v,\qquad \| \bar v - \bar u\|_{\L^1}~=~\sum_{j=1}^N \|\bar u_j-\bar u_{j-1}\|_{\L^1}\,.\eeq
\endi
This reduces the analysis to the basic case (i)-(ii) above, as shown in Fig.~\ref{f:t507}. 
The corresponding integrated functions $\ov U $ and $\ov V$ are shown 
in Figures~\ref{f:t518} and \ref{f:t519}.  In this case, the inequality (\ref{Liparr}) reduces to
\bel{eyLip}  \bigl\| y^{\bar u} - y^{\bar v}\bigr\|_{\L^1(\R)} ~=~\O(1)\cdot\delta,
\qquad \quad \delta\,\doteq\, \|\bar u-\bar v\|_{\L^1}\,=\,(b-a)(\sigma_v-\sigma_u).
\eeq

\begin{figure}[htb]
   \centering
 \includegraphics[width=0.6\textwidth]{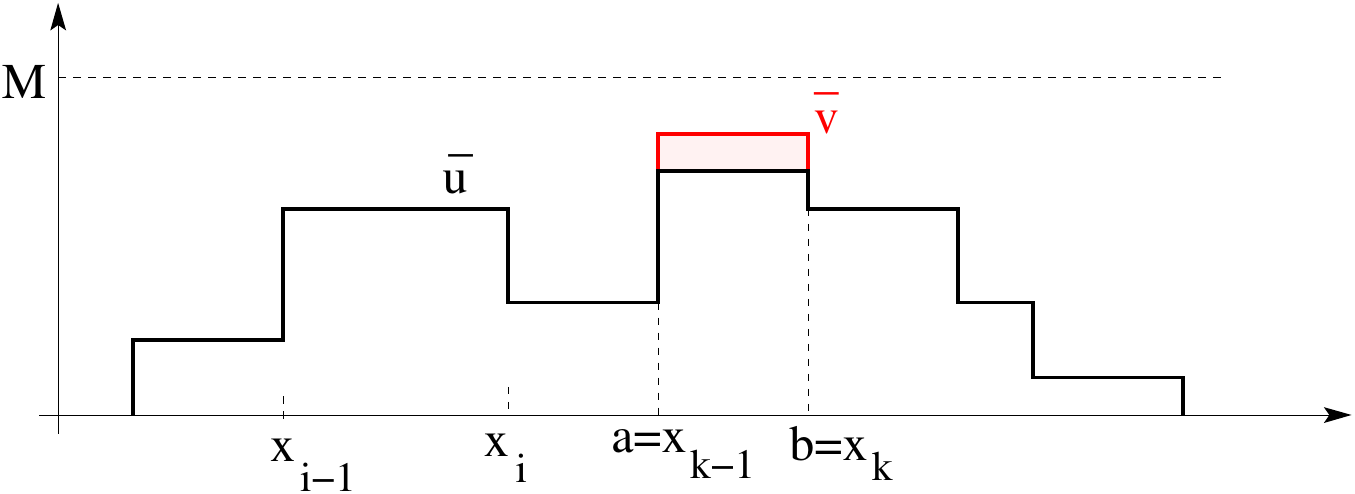}
   \caption{\small A piecewise constant departure rate $\bar u$ and a perturbation $\bar v$ of basic type.}
   \label{f:t507}
\end{figure}

\begin{figure}[htb]
   \centering
 \includegraphics[width=0.8\textwidth]{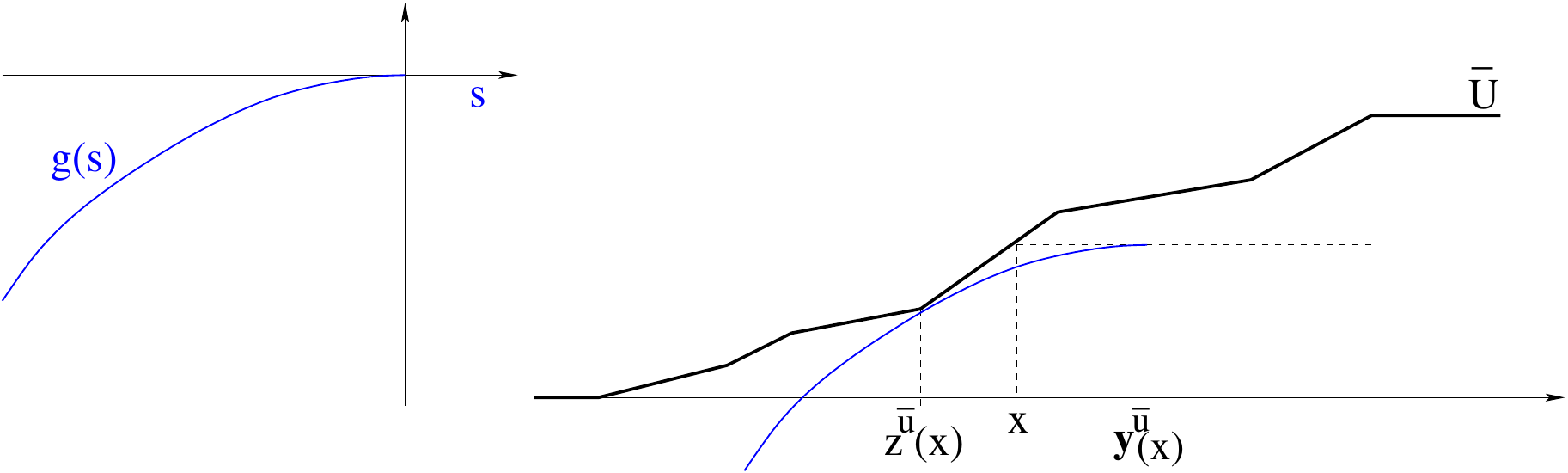}
   \caption{\small Left: the function $g$ at (\ref{gdef}).  Right:  the point $\bfy^{\bar u}(x)$ and the contact point $z^{\bar u}(x)$, introduced at (\ref{bfyx})-(\ref{zbx}). }
   \label{f:t514}
\end{figure}

\v
{\bf 2.} To simplify some of the computations, it will be convenient to 
introduce the points 
$$\bfy^{\bar u}(x)~\doteq~y^{\bar u}(x)- \ell_0\,.$$
 Since we trivially  have
$$\bfy^{\bar v}(x)-\bfy^{\bar u}(x)~=~y^{\bar v}(x)- y^{\bar u}(x),$$
we shall focus on the change in the values of $\bfy(x)$, which is easier to analyze. 
Consider the auxiliary function (see Fig.~\ref{f:t514}, left)
\bel{gdef} g(s)~\doteq~h(s-\ell_0)~=\,- L \,f^*\left( \ell_0-s\over L\right)~=\,-L f^*\left(f'(0)- {s\over L}\right),\qquad\qquad s\leq 0.\eeq
This function satisfies
\bel{gp1}g(0) = g'(0)=0, \qquad g''(0) <0, \qquad\lim_{s\to -\infty} g'(s) ~=~\lim_{p\to +\infty} {\partial \over\partial p} f^*(p)
~=~\lim_{p\to +\infty} u(p)~=~M.\eeq
Moreover, given $\kappa>0$, we can find $c_\kappa>0$ such that
\bel{gp2}g(s)\in [-\kappa, 0]\qquad\implies\qquad g(s)~\leq~- c_\kappa s^2.\eeq

Let $\ov U, \ov V$ be the integrated functions, as in (\ref{Uin}).
As shown in Fig.\ref{f:t514}, right,  given a departure rate $\bar u(\cdot)$, the point $\bfy^{\bar u}(x)$ can be computed 
by the formula
\bel{bfyx}
\bfy^{\bar u}(x)~=~\inf\Big\{ y\geq x\,;~~\ov U(s)\geq \ov U(x) + g(s-y)\quad\forall s\leq x\Big\}.\eeq
In other words, we consider the graph of the function $\ov U(x) + g(\cdot -y)$, and slide it forward 
(i.e., to the right)  until it
lies entirely below the graph of $\ov U(\cdot)$.
The contact point between the graphs of these two functions will be denoted by
\bel{zbx}
z^{\bar u}(x)~=~\max\Big\{ s\leq x\,;~\ov U(s)=\ov U(x) + g\bigl( s- \bfy^{\bar u}(x)\Big\}.\eeq
Notice that we have the implication
\bel{yuv<}
\bar u\leq \bar v\qquad\implies  \qquad \bfy^{\bar u}(x)\,\leq\,\bfy^{\bar v}(x)
\quad\forall x\in \R.\eeq
Indeed, if $ \ov V(s)\geq \ov V(x) +g\bigl(s-\bfy^{\bar v}(x)\bigr)$ for all $s<x$, then 
$$\ov U(s)~=~\ov V(s) + \bigl( \ov U(s)- \ov V(s)\bigr)~\geq~
\ov V(s) + \bigl( \ov U(x)- \ov V(x)\bigr)~\geq~
\ov U(x) + g\bigl( s- \bfy^{\bar v}(x)\bigr)$$
for all   $s<x$.  By the definition (\ref{bfyx}) this implies $\bfy^{\bar u}(x)\,\leq\,\bfy^{\bar v}(x)$.

%
%
%
\v
{\bf 3.} We are now ready to estimate the $\L^1$ distance in  (\ref{Liparr}), where
$\bar u, \bar v$ are piecewise constant functions with the properties listed in (i)-(ii).
Consider the point  $b^*\doteq \sup \{ x : z^{\bar v}(x)<b\}$.
For $x \in \R \setminus [a,b^*]$ one has  $\bfy^{\bar v} (x) = \bfy^{\bar u}(x)$. 
Therefore it is sufficient to consider $x \in [a, b^*]$.

Define the points
\bel{a*b'}a^*\doteq \inf \{ x: z^{\bar v}(x)>a\},
\qquad\qquad b' \doteq \sup \{ x : \bfy^{\bar v}(x)<b\}.\eeq
Since both cases  $a^*\le b'$ and $ b' < a^*$ are possible, in the next steps
we estimate  $\|\bfy^{\bar v}-\bfy^{\bar u}\|_{\L^1}$ separately on the interval $[a,a^*]$, 
on the possibly empty interval $[a^*,b']$, and on the interval $\bigl[\max\{a^*,b'\},\,b^* \bigr]$.

\begin{figure}[htb]
   \centering
 \includegraphics[width=1.0\textwidth]{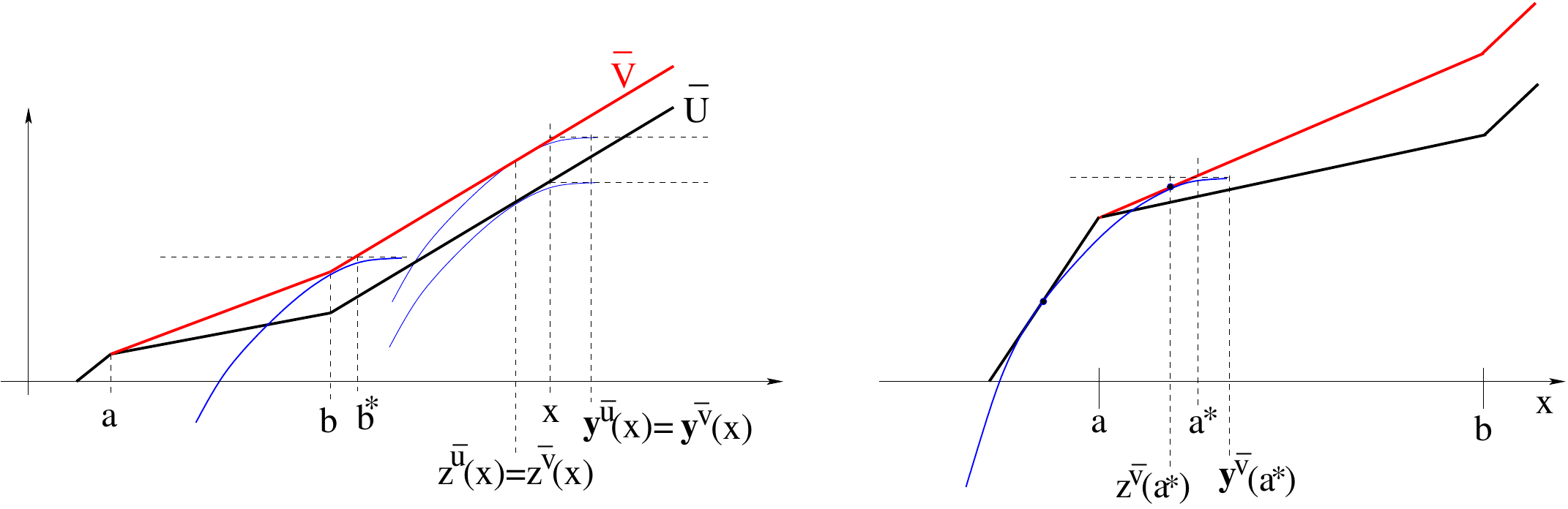}
   \caption{\small  Left: for $x>b^*$ one has $z^{\bar u}(x) = z^{\bar v}(x)$ and 
   $\bfy^{\bar u}(x) = \bfy^{\bar v}(x)$.  Right: an example of the point $a^*$ defined at  
   (\ref{a*b'}).}
   \label{f:t518}
\end{figure}

\begin{figure}[htb]
   \centering
 \includegraphics[width=0.7\textwidth]{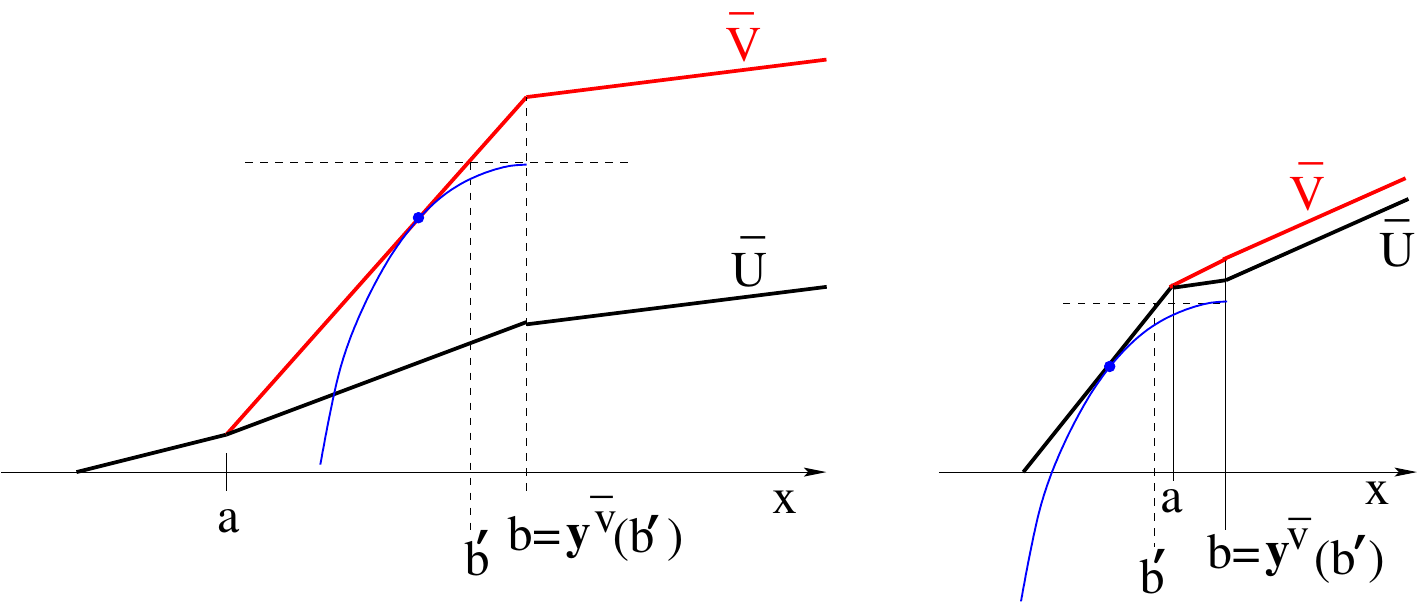}
   \caption{\small  Left: an example where $b'>a$. Right: an example where $b'<a$.}
   \label{f:t519}
\end{figure}

\v
{\bf 4.} Let $C_K>0$ be a constant such that, for every $\bar u \in \L^1(\R)$ with $\|\bar u\|_{\L^1(\R)}\le K$, there holds  
\[
\bfy^{\bar u}(x) - z^{\bar u}(x)\,\in \,[0, C_K]\qquad \forall x \in \R.
\]
Then choose a constant $c_K>0$ such that
\bel{cK}
g''(s) \le - c_K \qquad \forall s \in [-C_K,0].
\eeq
We claim that, for every $x \in \R$ such that $z^{\bar v}(x)<x$, one has
\bel{eLip}
\bfy^{\bar v}(x)-\bfy^{\bar u}(x) ~\le ~\frac{2}{c_K(\bfy^{\bar v}(x)-z^{\bar v}(x))}\, \delta.
\eeq
Indeed, since 
$$
\ov V(x) + g(z^{\bar v}(x)- \bfy^{\bar v(x)}) \ge \ov U(x) + g(z^{\bar v}(x)- \bfy^{\bar u(x)}) 
$$
then
\[
\bega{rl}
\delta& \ge ~\ov V(x) - \ov U(x) ~\ge \ds~ \int_{z^{\bar v}(x)}^{z^{\bar v}(x)+ \bfy^{\bar v}(x) - \bfy^{\bar u}(x)}g'\bigl(s-\bfy^{\bar v}(x)\bigr)ds ~\ge ~\bigl( \bfy^{\bar v}(x) - \bfy^{\bar u}(x)\bigr) \frac{g\bigl(z^{\bar v(x)}- \bfy^{\bar v}(x) \bigr)}{z^{\bar v}(x)-\bfy^{\bar v}(x)} \\[4mm]
 &\ge~ \bigl( \bfy^{\bar v}(x) - \bfy^{\bar u}(x)\bigr) \frac{c_K}2 \bigl(\bfy^{\bar v}(x)-z^{\bar v}(x)
 \bigr).
\enda
\]

\begin{figure}[htb]
   \centering
 \includegraphics[width=0.6\textwidth]{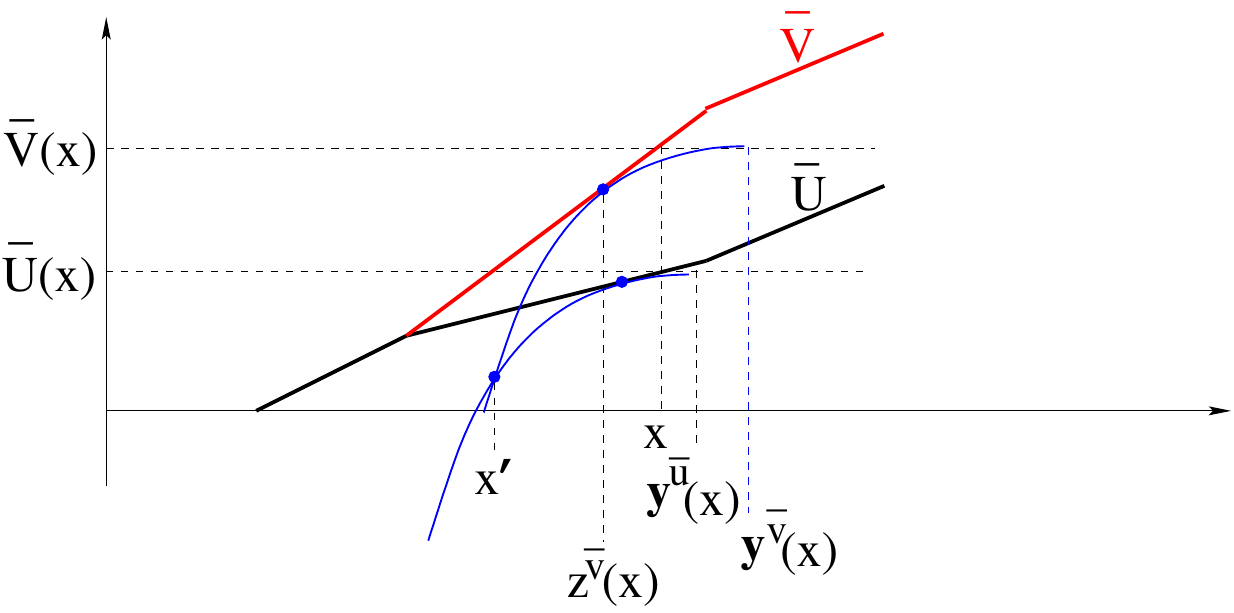}
   \caption{\small The points $x'<z^{\ov v}(x)$ and $\bfy^{\bar u}(x)<\bfy^{\bar v}(x)$ used in the proof of (\ref{eLip}).}
   \label{f:tn1}
\end{figure}

\v
{\bf 5.} 
 {\bf Estimate on $[a,a^*]$}.
 We define 
 $$\sigma^*\,\doteq \, g'\bigl(a-\bfy^{\bar v}(a^*)\bigr),\qquad\quad V^*\,\doteq \,\ov V(a^*)+ g(a-\bfy^{\bar v}(a^*))\le \ov V(a).$$ 
 Observe that $a^*-a \le \bfy^{\bar v}(a^*) -a \le {\sigma^* / c_K}$.

By the definition of $a^*$ at  (\ref{a*b'}) it follows that, for every $ x \in \,]a,a^*[\,$, 
there holds
\[
\ov V(x) + g(a-\bfy^{\bar v}(x))~\le~ V^*.
\]
 Moreover for every $x \in \,]a,a^*[$ it holds 
 \[
 \bega{rl}
 \ov V(x) &\ge ~ \ov V(a^*) + g(x-\bfy^{\bar v}(a^*)) ~
 = ~ V^* + g(x-\bfy^{\bar v}(a^*)) - g(a-\bfy^{\bar v}(a^*)) \\[3mm]
 &\ds \ge ~ V^* + (x-a) {g(a-\bfy^{\bar v}(a^*)) \over a-\bfy^{\bar v}(a^*)} ~\ge ~ V^* + (x-a) \frac{\sigma^* c_K}{2 \|g''\|_{\L^\infty}}\,.
 \enda
 \]
 Therefore
  \bel{e11}
 \bega{l}\ds
 \int_a^{a^*}\frac1{\bfy^{\bar v}(x)-z^{\bar v}(x)}dx ~\le ~ \int_a^{a^*}\frac1{\bfy^{\bar v}(x)-a}
 \,dx~\le~ \sqrt{\frac{\|g''\|_{\L^\infty}}{2} } \int_a^{a^*}\frac1{\sqrt{-g\bigl(a-\bfy^{\bar v}(x)\bigr)}}\,dx\\[4mm]
 \quad\ds \le ~ \sqrt{\frac{\|g''\|_{\L^\infty}}{2}} \int_a^{a^*}\frac1{\sqrt{\ov V(x)-V^*}}\,dx 
 ~\le ~\frac{\|g''\|_{\L^\infty}^{1/2}}{\sqrt{\sigma^* c_K}} \int_a^{a^*} \frac1{\sqrt{x-a}}\,dx ~
 \le ~  {2\|g''\|_{\L^\infty}\over c_K} \,.
 \enda
 \eeq
 In view of (\ref{eLip}), this yields
\bel{e12} \int_a^{a^*}\bigl| \bfy^{\bar v}(x) - \bfy^{\bar u}(x)\bigr|\, dx ~\le~ {2\delta\over c_K}  \int_a^{a^*}\frac1{\bfy^{\bar v}(x)-z^{\bar v}(x)}dx ~\leq~ \frac{4\|g''\|_{\L^\infty}}{c_K^2} \cdot \delta.
\eeq
 \v
 {\bf 6.} {\bf Estimate on $[a^*, b']$}. This case is nontrivial only if $b' > a^*$. We claim that 
 \bel{e13}
 \bfy^{\bar v}(x)-\bfy^{\bar u}(x)~\le~ \frac1{c_K}\left(1+ \frac{\|g''\|^2_{\L^\infty}}{c_K^2}\right)(\sigma_v-\sigma_u)\qquad \quad \forall ~x \in \ ]a^*,b'[\ \subset [a,b].
 \eeq 
 To prove \eqref{e13}, consider any $x \in \ ]a^*, b' [ \,$. Then $a < z^{\bar v}(x) < \bfy^{\bar v}(x) < b$, hence $\bfy^{\bar v}(x)-z^{\bar v}(x) = \Delta_{\sigma_v}$ with $g'(-\Delta_{\sigma_v})=\sigma_v$ and $-g(-\Delta_{\sigma_v})= \sigma_v (x- z^{\bar v}(x))$.
 Therefore
 \[
 \bfy^{\bar v}(x) \,=\, x + \Delta_{\sigma_v} + \frac{g(-\Delta_{\sigma_v})}{\sigma_v}.
 \] 
 Similarly we obtain 
 \[
  \bfy^{\bar u}(x) \,\ge\, x + \Delta_{\sigma_u} + \frac{g(-\Delta_{\sigma_u})}{\sigma_u}.
 \]
 In particular 
  \[
 \begin{split}
 \bfy^{\bar v}(x) - \bfy^{\bar u}(x)  \le  &~  \Delta_{\sigma_v} + \frac{g(-\Delta_{\sigma_v})}{\sigma_v} - \Delta_{\sigma_u} - \frac{g(-\Delta_{\sigma_u})}{\sigma_u} \\
 =&~  \Delta_{\sigma_v} - \Delta_{\sigma_u} + \frac{g(-\Delta_{\sigma_v})}{g'(-\Delta_{\sigma_v})}  - \frac{g(-\Delta_{\sigma_u})}{g'(-\Delta_{\sigma_u})} \\
 \le &~ \left(1+ \left\|\frac{g'' g}{(g')^2}\right\|_{\L^\infty([-C_K,0])}\right)(\Delta_{\sigma_v}-\Delta_{\sigma_u}) \\
\le &~  \left(1+ \frac{\|g''\|^2_{\L^\infty}}{c_K^2}\right)(\Delta_{\sigma_v}-\Delta_{\sigma_u}) \\
 \le &~ \frac1{c_K}\left(1+ \frac{\|g''\|^2_{\L^\infty}}{c_K^2}\right)(\sigma_v - \sigma_u),
 \end{split}
 \]
 where in the last inequality we used that the Lipschitz constant of $(g')^{-1}$ is bounded by $1\over c_K$.
 Thanks to (\ref{e13}) we now have
 \bel{e14}
  \int_{a^*}^{b'}\bigl|\bfy^{\bar v}(x)-\bfy^{\bar u}(x)\bigr| \,dx ~\le~ \frac1{c_K}\left(1+ \frac{\|g''\|^2_{\L^\infty}}{c_K^2}\right)(b-a)(\sigma_v - \sigma_u)~=~\frac1{c_K}\left(1+ \frac{\|g''\|^2_{\L^\infty}}{c_K^2}\right)\delta.
 \eeq
 \v
 {\bf 7.} {\bf Estimate on $\bigl[\max\{a^*,b'\},\,b^*\bigr]$.} If $\max\{a^*,b'\}<x<b^*$, then $z^{\bar v}(x) \in \,]a,b[\,$ and $\bfy^{\bar v}(x)\ge b$.
In particular $\bfy^{\bar v}(x) - z^{\bar v}(x)= \Delta_{\sigma_v}$ where $g'(-\Delta_{\sigma_v})=\sigma_v$. Therefore
\[
b^* - \max\{a^*,b'\} \,\le\, \Delta_{\sigma_v}
\]
and hence 
$$
\int_{\max\{a^*,b'\} }^{b^*} \frac1{\bfy^{\bar v}(x) - z^{\bar v}(x)} dx~ \le~ 1.
$$
In view of (\ref{eLip}), this yields
\bel{e15} \int_{\max\{a^*,b'\} }^{b^*} \bigl|\bfy^{\bar v}(x)-\bfy^{\bar u}(x)\bigr| \,dx
~\leq~ {2\delta\over c_K}   \int_{\max\{a^*,b'\} }^{b^*} \frac1{\bfy^{\bar v}(x)-z^{\bar v}(x)}dx ~\leq~{2\delta\over c_K} \,.
\eeq
Together, (\ref{e12}), (\ref{e14}) and (\ref{e15}) yield \eqref{Liparr}, completing the proof.
 \endproof

%
%
%
%
%
%
%
%

\section{Departure rates as Nash equilibria}
\label{sec:5}
\setcounter{equation}{0}
We now consider a cost $\vp(x)$ for early departure
and a cost $\psi(x)$ for late arrival (see Fig.~\ref{f:tf273}).  The main results will be obtained under the assumptions
\begin{itemize} 
\item[{\bf (A2)}] {\it  For some constants $L_\vp,L_\psi$, the cost functions $\vp,\psi$ satisfy
\bel{costp1}
-L_\vp\leq \vp'<0,\qquad \vp''\,\geq\,0,\qquad\psi,\psi''~> ~0,\qquad 0<\psi'<L_\psi\,,\eeq
\bel{costp2}
\lim_{x\to +\infty} 
\Big(\vp(x)+ \psi(x)\Big)=+\infty.\eeq}
\end{itemize}

\begin{figure}[htbp]
   \centering
 \includegraphics[width=0.5\textwidth]{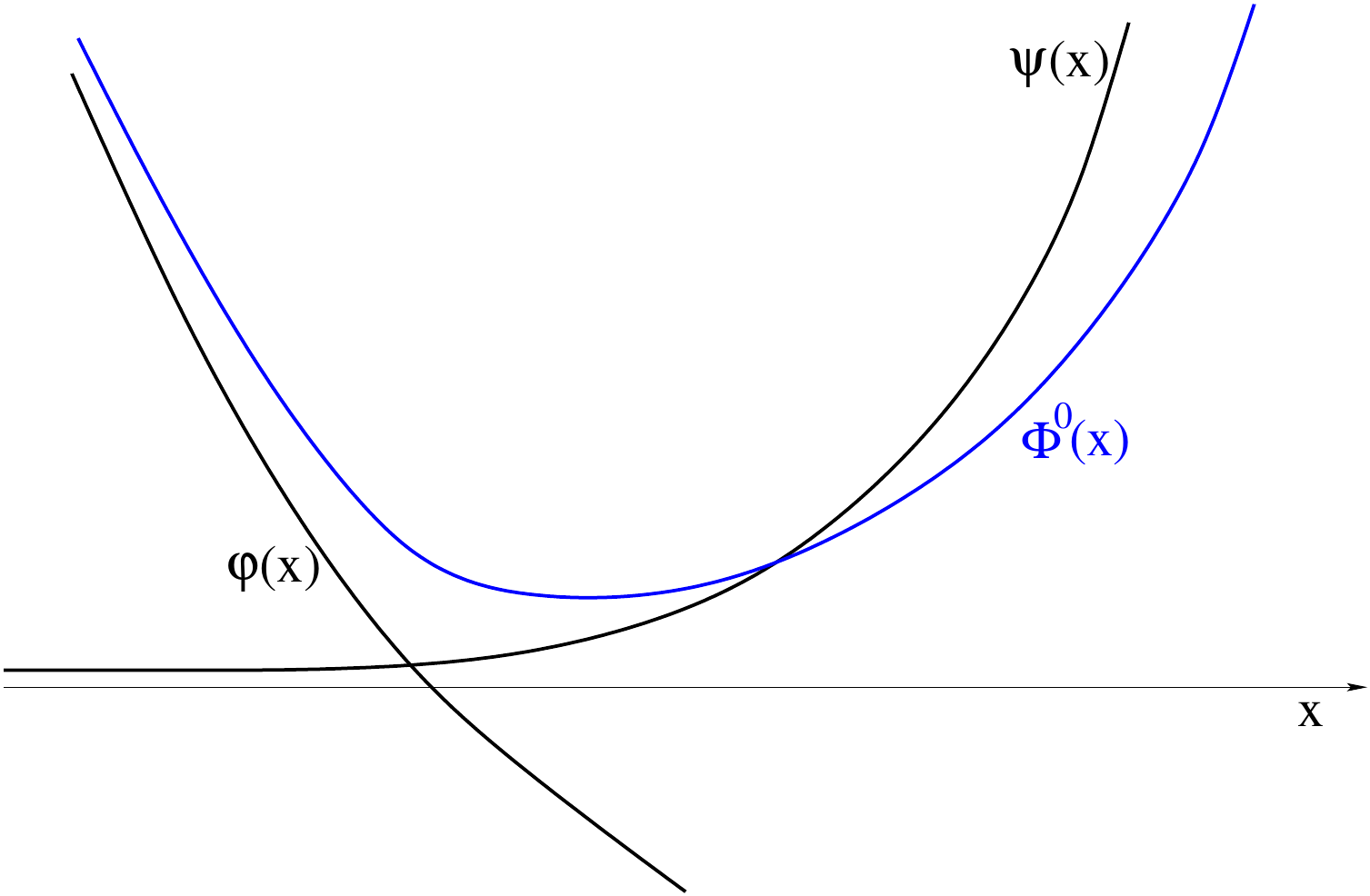}
   \caption{\small The departure cost $\vp(x)$, the arrival cost $\psi(x)$, and a lower bound 
   on the total cost $\Phi^0(x)=\vp(x)+\psi(x+\ell_0)$, according to the assumptions in {\bf (A2)}. }
   \label{f:tf273}
\end{figure}

Given a departure rate $\bar u$, the total cost to a driver departing at time $x$ is defined as
\bel{fud}
\Phi^{\bar u}(x)~\doteq~\vp(x)+ \psi\bigl(y^{\bar u}(x)\bigr),\eeq
where $y^{\bar u}(x)$ is the arrival time defined at (\ref{atime}).

We
denote by $\ell_0, \ell_{max}$ a lower and an upper bound for the duration of  
a trip, as in (\ref{ell0}) and (\ref{ellmax}).  These constants depend only on the flux function $f$ and on the total number $\kappa$ of drivers.
For any departure rate $ \bar u$ with $\|\bar u\|_{\L^1}=\kappa$, the corresponding cost $\Phi^u(x)$ to a driver 
departing at time $x$ thus satisfies the bounds
\bel{Phib} \Phi^0(x)~\doteq ~\vp(x) + \psi(x+\ell_0) ~\leq~\Phi^{\bar u}(x)~\leq ~\vp(x) + \psi(x+\ell_{max}) 
~\doteq~\Phi^{max}(x).\eeq

\begin{remark}\label{r:41} {\rm Choosing $L_\vp$ and $L_\psi$ in (\ref{costp1}) to be the Lipschitz constants of the functions $\vp,\psi$,
by convexity it follows
\bel{limpp}L_\vp\,=\,-\lim_{x\to -\infty} \vp'(x),\qquad\qquad L_\psi\,=\,\lim_{x\to +\infty} \psi'(x).\eeq
From the assumptions {\bf (A2)} it also follows that both functions $\Phi^0, \Phi^{max}$ are convex.
Moreover, there exist the limits of the gradients
\bel{LP1} \lim_{x\to -\infty} \Phi^0_x(x) ~=~ \lim_{x\to -\infty} \Phi^{max}_x(x) ~=~-L_\vp,\eeq
\bel{LP2} \lim_{x\to +\infty} \Phi^0_x(x) ~=~ \lim_{x\to +\infty}  \Phi^{max}_x(x) ~\doteq~L_\Phi~>~0,\eeq
for some positive constant $L_\Phi$.
}
\end{remark}
Let $\ov u(\cdot)$ be a departure rate where all drivers incur   the same total cost $c$.
Calling 
\bel{elldef} \ell(x) ~=~\hbox{[duration of the journey of the driver that departs at time~$x$],}\eeq 
this means
\bel{ccost}
\vp(x)+\psi\bigl(x+\ell(x)\bigr)~=~c\eeq
for all times $x$ at which some driver departs.
Differentiating (\ref{ccost}) one obtains
\bel{dco}
\vp'(x) +\psi'\bigl(x+\ell(x)\bigr)\bigl(1+\ell'(x)\bigr)~=~0,\eeq
\bel{dco2} \vp''(x) +\psi''\bigl(x+\ell(x)\bigr)\bigl(1+\ell'(x)\bigr)^2 + \psi'\bigl(x+\ell(x)\bigr)\ell''(x)~=~0.\eeq
This yields
\bel{lpr}
\ell'(x)~=~{-\vp'(x)\over\psi'\bigl(x+\ell(x)\bigr)} -1,\eeq
\bel{lp2} \ell''(x)~=~-{ \vp''(x) +\psi''\bigl(x+\ell(x)\bigr)\bigl(1+\ell'(x)\bigr)^2 \over  \psi'\bigl(x+\ell(x)\bigr)}~\leq~0.\eeq

\begin{figure}[htbp]
   \centering
 \includegraphics[width=0.5\textwidth]{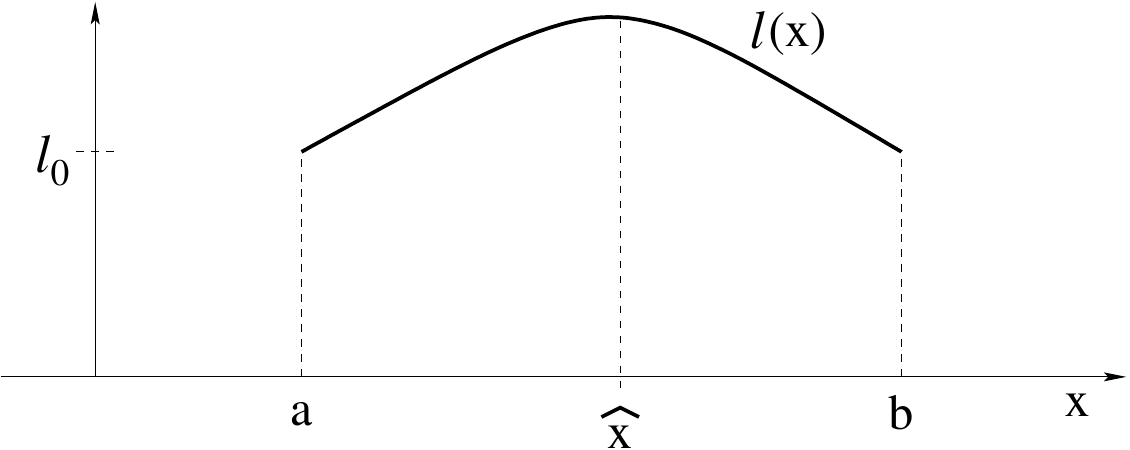}
   \caption{\small   The duration of the trip $\ell(x)$ as a function of the departure time. In a Nash equilibrium this is implicitly defined by (\ref{ccost}).  By (\ref{lp2}) this function is concave down. }
   \label{f:tf257}
\end{figure}
Observing that the map $x\mapsto x+\ell(x)$ is increasing, and so is $\psi'$, we conclude that
\begi
\item All departures occur within a time  interval  $[a,b]$.
\item One has $\ell(a)=\ell(b) = \ell_0 $, the constant in (\ref{ell0}).
\item There exists $\Hat x\in \,]a,b[\,$ such that the duration $\ell(x)$ of the journey is increasing on $[a, \Hat x]$ and decreasing
on $[\Hat x, b]$.  The driver whose journey lasts longer is the one who arrives at the time
$\Hat x +\ell(\Hat x)$, characterized by the relation
$$\vp'(\Hat x) +\psi'\bigl(\Hat x +\ell(\Hat x)\bigr)~=~0.$$
\item At the endpoints, by (\ref{lpr}) we have
$$\ell'(a)~=~{-\vp'(a)\over \psi'(a+ \ell_0 )}-1~>~0,\qquad\qquad \ell'(b)~=~{-\vp'(b)\over \psi'(b+ \ell_0 )}-1~<~0.$$
 \endi

\begin{figure}[htbp]
   \centering
 \includegraphics[width=1.0\textwidth]{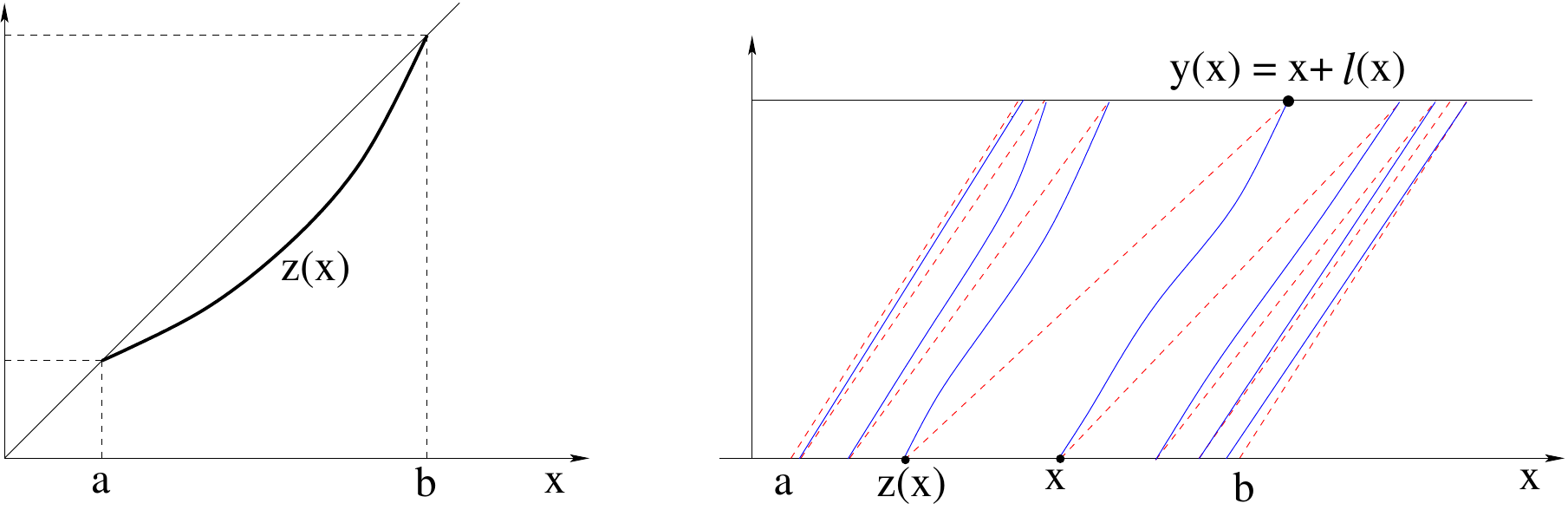}
   \caption{\small  Left: the function $z(x)$  in (\ref{z}). Right: the characteristics and the car trajectories. 
  }
   \label{f:tf256}
\end{figure}

The construction of a Nash equilibrium performed in \cite{BH1} goes as follows.
For a fixed constant $c$, let 
$\Q_c$  be the family of all bounded, non-decreasing departure distributions
\bel{Qdef} Q(x)~=~\int_{-\infty}^x \bar u(s)\, ds\eeq
for some integrable $\bar u:\R\mapsto\R_+$,
such that the corresponding solution
 satisfies
\bel{eless}
\vp(x^d(\beta))+\psi(x^a(\beta))~\leq ~c
\qquad\quad \hbox{for a.e.}~\beta\in [0, ~Q(+\infty)]\,.
\eeq
In other words, $\Q_c$ is the family of all departure distributions where every driver pays a total cost $\leq c$.
Then
 the departure distribution defined as a pointwise maximum:
\bel{Q*def}
Q^*(x)~\doteq~\sup\,\Big\{ Q(x)\,;~~Q\in \Q_c\Big\}\eeq
yields 
a Nash equilibrium.

\begin{remark}\label{r:21}{\rm {\bf (A sufficient condition for a partial solution).} From the previous characterization we deduce the following property.
Assume that $Q\in \Q_c$ as in (\ref{Qdef}) is a departure distribution such that
\bel{echx} x^d(\beta)\leq \Tilde x\qquad\implies\qquad 
\vp(x^d(\beta))+\psi(x^a(\beta))~= ~c,
\eeq
$$\Phi^{\bar u}(x)~\geq~c\qquad\qquad\forall x\leq \Tilde x.$$
That means: every driver departing before time $\Tilde x$ pays exactly $c$ as total cost, and there is no way to achieve a lower cost by departing before time $\Tilde x$.
Then the Nash equilibrium $Q^*$ satisfies
\bel{QHN} Q^*(x)~=~Q(x)\qquad\forall x\leq \Tilde x.\eeq
}\end{remark}

\subsection{An example of a Nash equilibrium.}\label{sec:51}
Assume that the flux function satisfies 
\bel{f0} f(u)~=~u+{u^2\over 2}\qquad\forall u\in [0,1].\eeq
To construct an explicit example of  a smooth Nash equilibrium, we start by
defining a departure rate
\bel{f1}\bar u(x)~=~\left\{\bega{cl} \ve \,\cos x\qquad &\hbox{if} \quad |x|<\pi/2,\\[1mm]
 0\qquad &\hbox{if} \quad |x|\geq \pi/2.\enda\right.\eeq
 Then we construct a departure and an arrival cost such that, with  the departure rate $\bar u(\cdot)$, all 
 drivers pay the same total cost.
 
\begin{figure}[htb]
   \centering
 \includegraphics[width=0.4\textwidth]{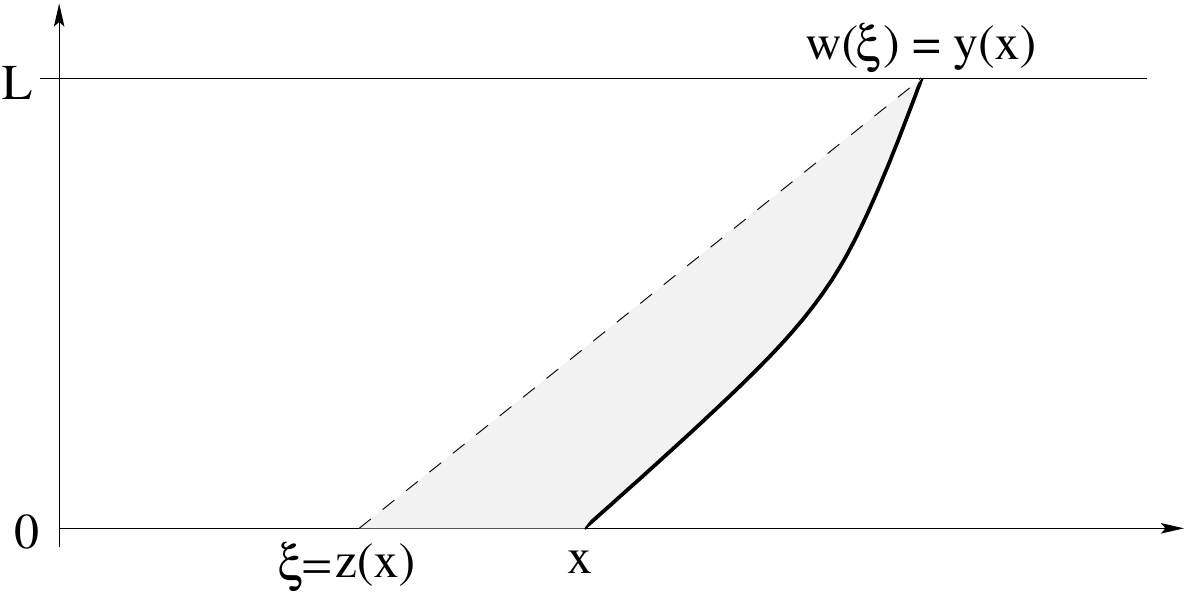}
   \caption{\small Here $y(x)$ is the arrival time of the driver departing at time $x$, while $z(x)$ is the 
   initial point of the backward characteristic through $y(x)$.}
   \label{f:t520}
\end{figure}

To fix ideas, let $L=1$ be the length of the road.   Then the solution to the Cauchy problem 
(\ref{f0})- (\ref{f1}) is obtained by the method of characteristics:
$$u\bigl(1,\, x+1+\bar u(x)\bigr)~=~\bar u(x),$$
provided that
map $x\mapsto 1+x+\ve \cos x$ is smoothly invertible.  This happens for $0<\ve<1$.

The arrival time $y(x)$ of the driver starting at time $x$  is implicitly determined by 
 \bel{eqmass}
\int_{-\infty}^{x} \bar u(s)\,ds \;=\; \int_{-\infty}^{y(x)} u(1,s)\,ds,
\eeq

%
%

Denoting by
$$w(\xi)\,\doteq\,\xi+1+\ve\cos \xi$$
the endpoint of the characteristic starting at $\xi$ (see Fig.~\ref{f:t520}), 
since $u\bigl( 1, w(\xi)\bigr)=\bar u(\xi)$, we have
\bel{inw} \int_{-\infty}^{w(\xi)} u(1,s)\, ds~=~ \int_{-\infty}^\xi \bar u(\zeta)\cdot {dw\over d\zeta}\, d\zeta.
\eeq
Calling $\xi =z(x)$ the initial point of the backward characteristic through $y(x)$,  so that $y(x)= w(\xi)$,
in the particular case where $\bar u$ is the function in (\ref{f1}), from (\ref{eqmass})-(\ref{inw}) we obtain
\[
\int_{-\pi/2}^x\ve\cos s\, ds~=~\int_{-\pi/2}^{z(x)} \ve\cos s\cdot (1-\ve \sin s)\, ds\,,\]
\[
\ve\bigl(\sin x + 1\bigr)~= ~\varepsilon\sin z(x) - \frac{\varepsilon^{2}}{2}\sin^{2} z(x)
  + \varepsilon + \frac{\varepsilon^{2}}{2}\,,
\]
This yields a 
quadratic equation for  $\sin z(x)$ which can then be solved explicitly:
\bel{equad}
\frac{\varepsilon}{2}\sin^2 z(x) - \sin z(x)
+ \Bigl(\sin x - \frac{\varepsilon}{2}\Bigr) ~=~ 0 .
\eeq
The root of this equation which lies inside $[-1,1]$ is computed by
\bel{eqz}
\sin z(x)
\,=\, \frac{1-\sqrt{\,1+\varepsilon^{2}-2\varepsilon\sin x\,}}{\varepsilon}\,,
\qquad
z(x)\,=\,\arcsin\!\left(
\frac{1-\sqrt{\,1+\varepsilon^{2}-2\varepsilon\sin x\,}}{\varepsilon}
\right).
\eeq

In turn, the arrival time of the driver departing at time $x$ is given by
\bel{atx}
y(x)~=~z(x) + 1 + \ve\cos z(x).\eeq
At this stage, any couple of functions $\vp,\psi$ such that
$$\vp(x) + \psi(y(x))~\left\{ \bega{rl} =\,c\quad\hbox{if} ~~|x|\leq \pi/2,\\[1mm]
\geq\,c\quad\hbox{if}~~ |x|> \pi/2,\enda\right.$$
for some constant $c$
would yield a Nash equilibrium. 
For example, we can start by choosing 
$$\psi(y)= y\qquad  \hbox{for all arrival times} \quad y\in \left[1-{\pi\over 2}\,,~1+{\pi\over 2}\right],$$
and then define
$$\vp(x)\doteq c-y^{\ov u}(x)\qquad\qquad |x|\leq \pi/2.$$
We check that this definition yields a convex function $\vp$. Indeed,
when $\psi(x)=x+c$, by (\ref{dco2}) it follows
$$\vp''(x)~=~-\ell''(x)~=~-y''(x).$$
 It remains to check that
\bel{conc}
y''(x)\leq 0\qquad\forall |x|\leq \pi/2.\eeq

Calling $S= \sin z(x)$,  for $|x|< \pi/2$ we find
$$y'(x)~=~{\bar u(x)\over\bar u(z(x))} ~=~{\cos x\over\cos z(x)}~>~0.$$
Differentiating the balance of mass, we obtain
$$z'(x)= {\cos x\over \cos z(1-\ve S)},\qquad  y''(x) ~=~{S\cos^2 x - \sin x \cos^2 z(1-\ve S)\over \cos^3 z(1-\ve S)}$$
Using the identity $\sin x = S+{\ve\over 2} (1-S^2)$ one obtains 
$$y''(x)~=~{-(\ve/4) (1-S^2)^2 (2-\ve S)\over \cos^3 z(1-\ve S)}~<~0.$$
%

\section{Analysis of the Nash equilibrium}    
\label{sec:6}
\setcounter{equation}{0}
Let the function $\ell(x)$ in be  as in Fig.~\ref{f:tf257}, with
$$\ell(a)~=~\ell(b)~=~\ell_0, \qquad\qquad \ell''(x)<0 \quad \hbox{for}~~a<x<b.$$
Note that there exists $a<\Hat x<b$ such that
\bel{lprime}
\left\{\bega{rl} \ell'(x)\geq 0\quad\hbox{if} ~~x\in [a,\Hat x],\\[1mm]
 \ell'(x)\leq 0\quad\hbox{if} ~~x\in [\Hat x,b].\enda\right.\eeq
We need to construct a departure distribution $u(0,x)=\bar u(x)$ supported on the time interval $[a,b]$,
where the driver departing at time $x$ arrives at time $y(x)=x+\ell(x)$. 

As shown in Fig.~\ref{f:tf256}, call $z(x)$ the initial point of the characteristic reaching $y(x)$ at $t=L$.
We have the two basic equations
\bel{beq}\left\{\bega{rl} x + \ell(x)&=~z(x) + L f'\bigl(\bar u(z(x))\bigr),\\[2mm] \bar u(x)&=
~\bigl(1+\ell'(x)\bigr) \bar u\bigl(z(x)\bigr).\enda\right.\eeq
The second equation is obtained from  the identity
\bel{massid}\int_{-\infty}^x u(0,s)\, ds~=~\int_{-\infty}^{y(x)} u(L,s)\, ds.\eeq
Indeed, differentiating w.r.t.~$x$ and observing that the solution $u$ is constant along characteristics, one obtains
\bel{massi2}u(0,x)~=~y'(x) u\bigl(L,y(x)\bigr)~=~\bigl(1+\ell'(x)\bigr) u\bigl(0, z(x)\bigr).\eeq

%
%

If we fix an initial  point $x_0\in \,]a,b[\,$ and a value $u(x_0)=u_0$, the two equations in (\ref{beq})
yield a sequence of points $(x_n, u_n)$,
with
\bel{xzn}
x_{n-1}\,=\,z(x_n),\qquad u_n\,=\,\bar u(x_n),\qquad\qquad n\in {\mathbb Z}.\eeq
By (\ref{beq}), these can be
 inductively constructed by
\bel{seq}\left\{\bega{rl} x_{n-1}+ L f'(u_{n- 1})&\ds=~x_n+ \ell(x_n) \,,\\[4mm]
u_{n-1}&=~ \ds {u_n\over 1+\ell'(x_n)}\,,
\enda\right.\eeq

Ideally, we seek an initial value $u_0$ such that the above sequence satisfies
\bel{limn}\left\{ \bega{rl} x_n\to a,\qquad u_n\to 0\qquad\quad &\hbox{as}~~n\to -\infty,\\[1mm]
 x_n\to b,\qquad u_n\to 0\qquad\quad &\hbox{as}~~n\to +\infty.\enda\right.\eeq
Moreover, the sequence $(x_n)_{n\in{\mathbb Z}}$ should be monotone increasing.

\begin{remark}\label{r:51} {\rm
A word of caution is needed here.  Assume that the time $x_{n-1}$ is given, together with the 
density of departures $u_{n-1}= \bar u(x_{n-1})$.   This uniquely determines the endpoint of the characteristics starting at $x_{n-1}$, namely
\bel{yn3}A_n~=~x_{n-1} + L f'(u_{n-1}).\eeq
As shown in Figure~\ref{f:tn6},  the point $A_n$ is obtained by shifting the graph
of $h(\cdot)$ until it is tangent to the graph of $\ov U$ at the point $\bigl(x_{n-1}, \ov U(x_{n-1})\bigr)$.

Notice  that the two values $(x_{n-1}, u_{n-1}) $ uniquely determine $A_n$, 
but not the point $x_n$ such that $A_n = y(x_n)$.
Indeed, there are many ways to prolong the graph of $\ov U(\cdot)$ for $x>x_{n-1}$,
drawn in red in Fig.~\ref{f:tn6}.   In these various extensions, the driver who arrives at time $A_n$  departs at different times $x_n$.
Motivated by the Nash equilibrium, the choice of $x_n$ is dictated by the identity
\bel{xn4} A_n~=~y(x_n)~=~x_n +\ell(x_n),\eeq
where $\ell$ is the function at (\ref{ccost}).
This yields the first identity in (\ref{seq}).   However, this is meaningful only as long as
\bel{xn5}x_n~\in~[x_{n-1}, A_n -\ell_0].\eeq
As a consequence, the inductive algorithm (\ref{seq}) breaks down as soon as (\ref{xn5}) 
is no longer satisfied.}
\end{remark}

\begin{figure}[htbp]
   \centering
 \includegraphics[width=0.7\textwidth]{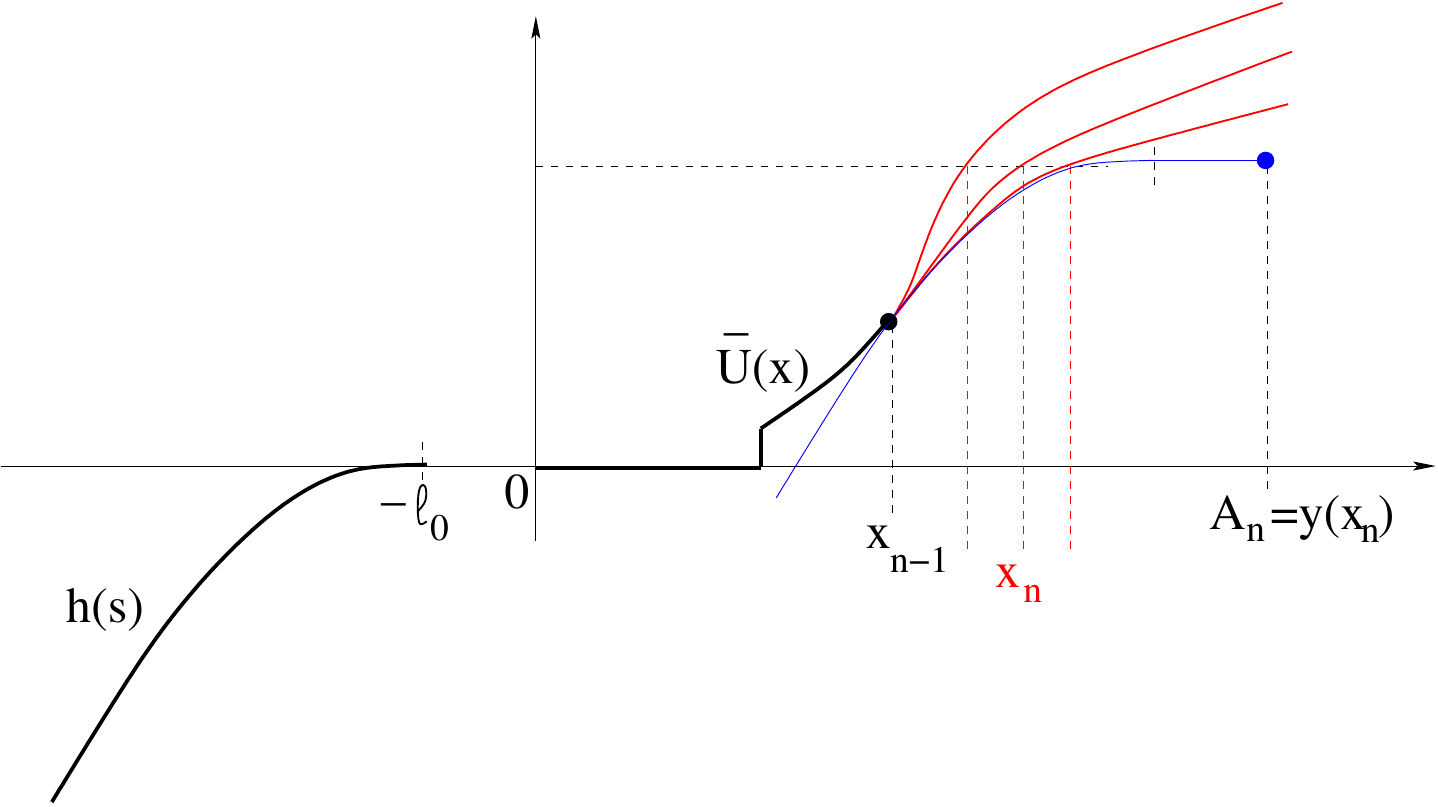}
   \caption{\small  The construction in Remark~\ref{r:51}. }
   \label{f:tn6}
\end{figure}

\v
Recalling (\ref{ell0}), we write the Taylor expansion for $f$ at $u=0$ and for $\ell$ at $x=a$ and at $x=b$:
\bel{Tay}\bega{rl} f(u)&\ds=~{\ell_0\over L} u + {\omega\over 2} u^2+\O(1) \cdot u^3,
\\[4mm]
\ell(x) &\ds=~\ell_0 + c_1 (x-a) + \O(1)\cdot (x-a)^2,\\[4mm]
\ell(x) &\ds=~\ell_0 - c_2 (x-b) + \O(1)\cdot (x-b)^2,\enda\eeq
for some constants $c_1,c_2,\omega>0$, $c_1<1$.

\begin{figure}[htb]
   \centering
 \includegraphics[width=0.6\textwidth]{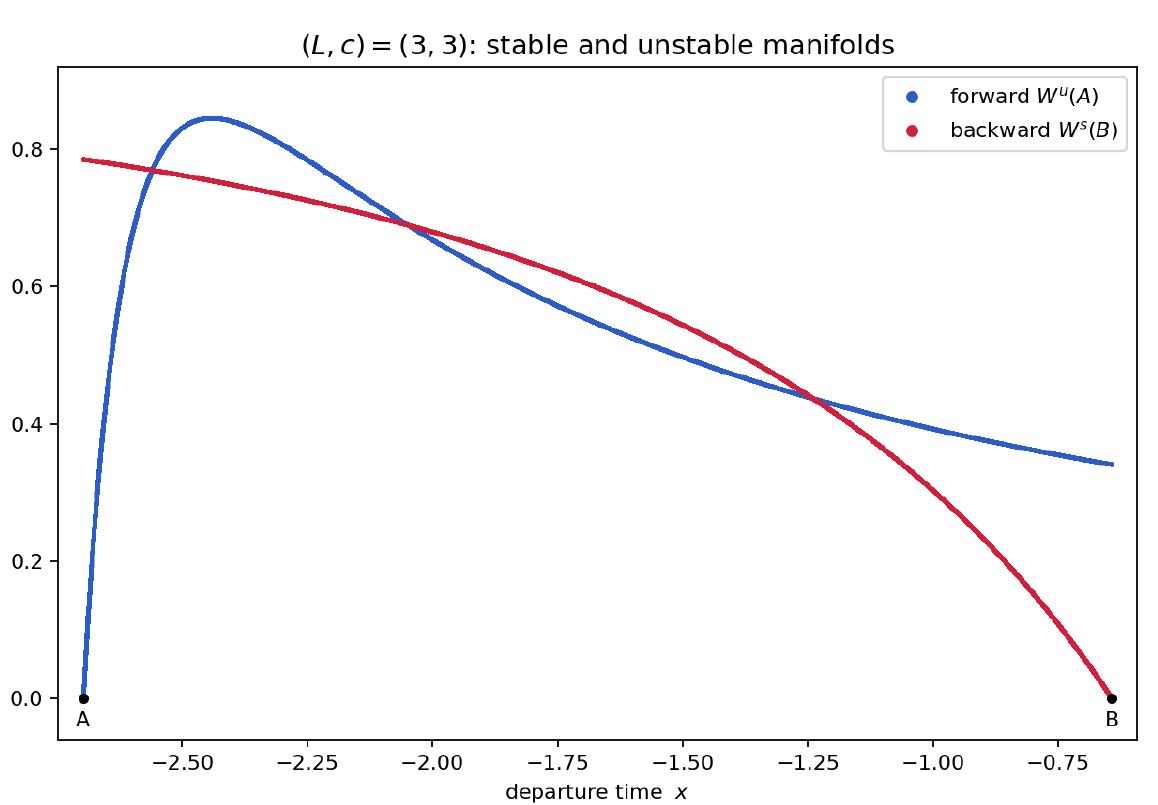}
   \caption{\small  A plot of the unstable manifold through $(a,0)$ and the stable manifold through $B=(b,0)$.
   Here we choose $\vp(t)=-t$, $\psi(t)=e^t$, $f(u)= 1- \sqrt{1-u}$, $L=3$, $c=3$.}
   \label{f:tflow-manifolds}
\end{figure}

Inserting (\ref{Tay}) in (\ref{seq}), for $x\approx a$ we obtain
\bel{Ta2} 
\left\{ \bega{rl} x_{n-1}&\ds \approx x_n + \ell_0 + c_1 (x_n-a) - {L \ell_0\over L} - L \omega\, u_{n-1}\,,
\\[4mm]
u_{n-1}&\ds\approx ~{u_n\over 1 + c_1}\enda\right.\eeq
In matrix notation, this can be written as
\bel{matr1}\begin{pmatrix} x_n-a \cr\cr
u_n\end{pmatrix}~\approx~
\begin{pmatrix}{1\over 1+c_1} && {L\omega\over 1+c_1}\cr\cr
0 && 1+c_1\end{pmatrix}\begin{pmatrix} x_{n-1}-a \cr\cr
u_{n-1}\end{pmatrix}.\eeq
Similarly, for $x\approx b$ we obtain
\bel{Ta4} 
\left\{ \bega{rl} x_{n-1}&\ds \approx x_n + \ell_0 - c_2 (x_n-b) - {L \ell_0\over L} - L \omega\, u_{n-1}\,,
\\[4mm]
u_{n-1}&\ds\approx ~{u_n\over 1 - c_2}\enda\right.\eeq
In matrix notation, this can be written as
\bel{matr2}\begin{pmatrix} x_n-b \cr\cr
u_n\end{pmatrix}~\approx~
\begin{pmatrix}{1\over 1-c_2} && {L\omega\over 1-c_2}\cr\cr
0 && 1-c_2\end{pmatrix}\begin{pmatrix} x_{n-1}-b \cr\cr
u_{n-1}\end{pmatrix}.\eeq
We observe that both matrices in (\ref{matr1}) and (\ref{matr2}) have eigenvalues 
$0<\lambda_1<1<\lambda_2$.  Hence the points $A=(a,0)$ and $B=(b,0)$ are both saddle points for the discrete dynamics.
In a generic setting, we do not expect that the unstable manifold through $A$ 
should coincide with the stable manifold through $B$.
A precise result in this direction will be given in Theorem~\ref{t:82}.
%
%
%
%
%
%

If $\bar u(x)$ is a departure rate yielding a Nash equilibrium, then for every $x\in [a,b]$ we have
\bel{Nchar}
\int_a^x \bar u(y)\, dy~=~\int_{a+ \ell_0 }^{x+\ell(x)} u(L,\zeta)\, d\zeta\,.\eeq
Indeed, if the driver starting at time $x$ arrives at time $y(x)\doteq x+\ell(x)$, that means that the total number of drivers
departing before time $x$ is the same as the total number of drivers arriving before time $x+\ell(x)$.
Assuming that the solution $u=u(t,x)$ remains smooth on the entire domain $[0,L]\times\R$,
the arrival rate $u(L,\cdot)$ can be computed by characteristics.
\v
{\bf 1.}
Let $z(x)$ be the initial point of the characteristic that ends at $y(x)\doteq x+\ell(x)$, so that
\bel{z} z(x)~=~x+\ell(x)- L\,f'\bigl( \bar u(z(x))\bigr) .\eeq
 Applying the divergence theorem to the triangular region with vertices at $(0,z(x))$, $(0,x)$ and $(L, y(x))$ (see Fig.~\ref{f:tf256}, right),
we obtain
\bel{div}
\int_{z(x)}^x \bar u(s)\, ds~=~L \Big[f'\bigl(\bar u(z(x))\bigr) \bar u(z(x))- f\bigl(\bar u(z(x))\bigr) \Big].\eeq
Indeed, the flux across the car trajectory starting at $x$ vanishes identically.
\v
{\bf 2.}
Solving the conservation law by characteristics, we find that the 
gradient satisfies
$${d\over dt} u_x(t, x(t))~=~- f''\bigl(\bar u(x(0))\bigr) u_x^2\bigl(t, x(t)\bigr),\qquad\qquad {d\over dt} x(t)~=~f'(u).$$
\bel{uxdec}u_x(t, x(t))~=~{ \bar u_x(x(0))\over 1+ t f''\bigl(\bar u(x(0))\bigr)  \bar u_x(x(0))}\qquad\qquad t\in [0,L].\eeq
In particular, the
density of arrivals at the time $y(x)$ satisfies
\bel{uxL}u_x(L, y(x))~=~{ \bar u_x(z(x)) \over  1+ L f''\bigl(\bar u(z(x))\bigr) \, \bar u_x(z(x))}\,.\eeq
\v
{\bf 3.}
Differentiating (\ref{z}) w.r.t.~$x$, one obtains
\bel{zp5}
z'(x)~=~1+\ell'(x) - L f''\bigl(\bar u(z(x))\bigr)\, \bar u_x(z(x))\, z'(x),\eeq
\bel{zp6}z'(x)~=~{1+\ell'(x)\over 1+ L f''\bigl(\bar u(z(x))\bigr) \bar u_x(z(x))}\,.\eeq
On the other hand, differentiating (\ref{div}) we find
$$\bar u(x) -\bar u(z(x)) z'(x) ~=~L f''(\bar u(z(x))) \bar u_x(z(x)) \bar u (z(x)) z'(x),$$
\bel{Nase}
\bar u(x)~=~ \Big[1+ L f''\bigl( \bar u(z(x))\bigr) \,\bar u_x(z(x))  \Big] \bar u(z(x))\cdot z'(x)~=~
(1+\ell'(x))\, \bar u(z(x)).    \eeq
\v
{\bf 4.}
We now analyze what happens at the endpoints of the interval $[a,b]$.
At $x=a$ or $x=b$ we have $\bar u=0$. Therefore
$$u_x(L,a + \ell_0)~=~\Tilde \alpha~\doteq~{1\over f''(0) L+{1\over \alpha}},\qquad\qquad u_x(L,b + \ell_0)
~=~\Tilde \beta~\doteq~{1\over f''(0) L+{1\over \beta}}.$$
At $x=a$ we have
\bel{id3}z(a)\,=\,a,\qquad \bar u(a)\,=\,0,\qquad \qquad z'(a)~=~{1+\ell'(a) \over 1+ L f''(0) \bar u_x(a)}\,.\eeq
Indeed, the first identity is clear, because the first driver encounters no traffic, hence travels with characteristic speed: $\ell(a)= \ell_0 $.  The second identity follows by (\ref{Nase}), because $\ell'(x)\not= 0$. The third identity 
follows by (\ref{zp6}).
Differentiating (\ref{Nase}) one obtains
\bel{zprime}\bar u_x(a)~=~(1+\ell'(a)) \bar u_x(a) z'(a),\qquad\qquad z'(a)~=~{1\over 1+\ell'(a)}~<~1,
\eeq
\bel{bux} 1+ L f''(0)\bar u_x(a) ~=~{1+\ell'(a)\over z'(a)},\qquad\qquad \bar u_x(a)~=~{(1+\ell'(a))^2-1\over L f''(0)}~>~0\,.\eeq
Similarly, at $x=b$ we have
$$\bar u(b)\,=\,0,\qquad z(b)\,=\,b,\qquad\qquad z'(b)~=~{1+\ell'(b) \over 1+ L f''(0) \bar u_x(b)}\,.$$
Differentiating (\ref{Nase}) one obtains
\bel{zpb}z'(b)~=~{1\over 1+\ell'(b)}~>~1,\qquad\qquad 
 \bar u_x(b)~=~{(1+\ell'(b))^2-1\over L f''(0)}~<~0\,.\eeq

\begin{figure}[htb]
   \centering
 \includegraphics[width=1.0\textwidth]{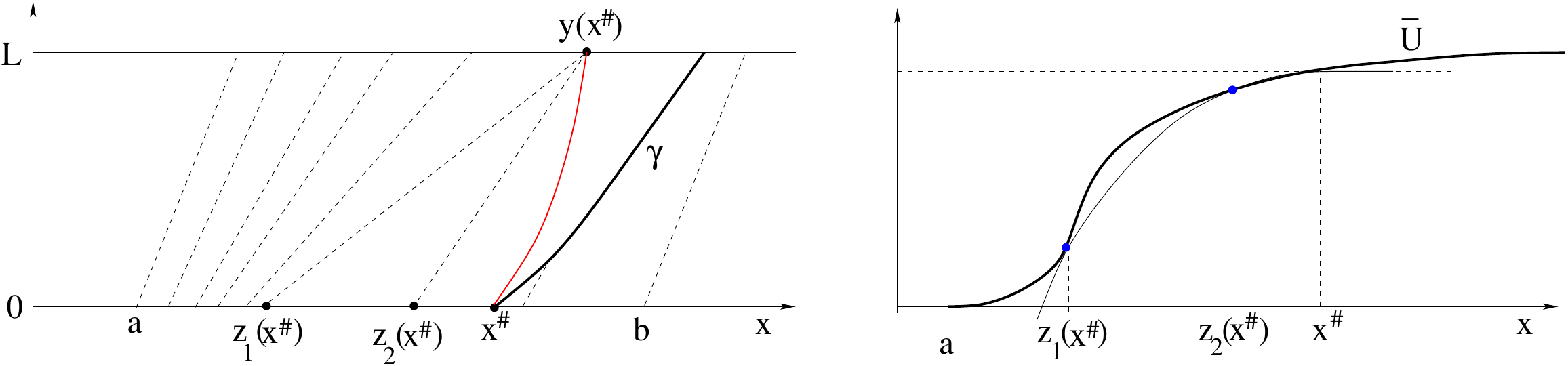}
   \caption{\small  Left: if $x^\sharp$ is a point where a jump in the Nash departure rate 
   occurs, then two 
   distinct characteristics should meet at the arrival time
   $y(x^\sharp)$.   Right: the 
   initial points $z_1<z_2$ of these two characteristics correspond to the points where
   the function at (\ref{hlower}) is tangent to the graph of the integrated function $\ov U$.}
   \label{f:tn4}
\end{figure}

%
%
%

Calling $x^\sharp\in \,]\Hat x, b[\,$ the position of this first shock, as shown in Fig.~\ref{f:tn4}, left,
there should be two distinct
characteristics reaching $y^{\bar u}(x^\sharp)$, starting from two points
\bel{zdag12}z^{\bar u}_1(x^\sharp)~<~z^{\bar u}_2(x^\sharp).\eeq
This is the case if
\bel{cond1} z^{\bar u}_1(x^\sharp)+ L f' \Big(\bar u \big(z^{\bar u}_1(x^\sharp)\bigr)\Big)
~=~ z^{\bar u}_2(x^\sharp)+ L f' \Big(\bar u \big(z^{\bar u}_2(x^\sharp)\bigr)\Big).\eeq
In addition, applying the divergence theorem on the triangle with vertices
$(0, z_1)$, $(0, z_2)$ and $(L,y)$, one obtains the identity
\bel{cond2}
\int_{z_1}^{z_2} \bar u(x)\, dx~=~L\Big[ \bar u(z_1) f' \bigl(\bar u(z_1)\bigr)- f\bigl(\bar u(z_1)\bigr)  \Big] 
- L\Big[\bar u(z_2) f' \bigl(\bar u(z_2)\bigr)- f\bigl(\bar u(z_2)\bigr) \Big].
\eeq

In terms of the integrated function $\ov U$ in (\ref{Udef}), this means that 
the function 
\bel{hlower} s~\mapsto ~\ov U(x^\sharp) +h(s-y^{\bar u} x^\sharp)\bigr)\eeq
is tangent from below to the graph of $\ov U$ at the two distinct points (\ref{zdag12}).

\section{Emergence of a queue}
\label{sec:7}
\setcounter{equation}{0}
Let the departure and arrival cost functions $\vp, \psi$ be given, as in {\bf (A2)}.

Let  $\rho$ be the density of cars, and let $\rho v(\rho)$ be the flux function. Set
$$v_0 = v(0),\qquad v^* = v(\rho^*),$$
where $$\rho^*\,=\,\argmax_\rho \rho v(\rho)$$
is the density which yields the maximum possible flux.  

Moreover, set  $L$ = length of the road,~  $\kappa=$ total number of drivers.

\begin{proposition}\label{p:71} There exist $\kappa^+>\kappa^->0$ such that:
\begi
\item[(i)] If $\kappa>\kappa^+$ the Nash equilibrium solution contains a queue and a shock.
\item[(ii)] If $\kappa<\kappa^-$ the Nash equilibrium solution does not contain any queue. 
\endi
\end{proposition}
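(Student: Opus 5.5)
Both parts will be proved by comparing the equilibrium cost level $c$ with the bounds (\ref{Phib}). By Section~\ref{sec:5}, all departures occur in an interval $[a,b]$ with $\ell(a)=\ell(b)=\ell_0$, and every driver pays the common cost $c$. In particular
$$\Phi^0(a)\,=\,\Phi^0(b)\,=\,c,$$
where $\Phi^0(x)=\vp(x)+\psi(x+\ell_0)$ is convex, with asymptotic slopes $-L_\vp<0$ and $L_\Phi>0$ (Remark~\ref{r:41}). Let $x_0$ be its minimum point. Then $[a,b]$ is the sublevel set $\{\Phi^0\le c\}$. Its length $b-a$ is an increasing continuous function of $c$. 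It tends to $0$ as $c\downarrow\Phi^0(x_0)$, and grows at most linearly in $c$, roughly like $c/L_\vp + c/L_\Phi$.

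\textbf{Part (i).} Assume no queue forms. Then the effective flux equals $\bar u\le M$ on $[a,b]$, so $\kappa\le M(b-a)$. Hence $c\ge c_-(\kappa)$, where $c_-(\kappa)$ is defined by requiring the sublevel set of $\Phi^0$ at level $c_-(\kappa)$ to have length $\kappa/M$; note $c_-(\kappa)\to\infty$ as $\kappa\to\infty$.

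For the contradiction I need an upper bound on $c$ that grows more slowly. The most robust choice is the following. Choose $\Hat x$ so that $\vp'(\Hat x)+\psi'(\Hat x+\ell(\Hat x))=0$, as in Section~\ref{sec:5}. By (\ref{ellmax}), $\ell(\Hat x)\le \kappa/M + L/v^*$. So
$$c\,=\,\vp(\Hat x)+\psi\bigl(\Hat x+\ell(\Hat x)\bigr)\,\le\, \vp(\Hat x)+\psi\bigl(\Hat x+\kappa/M+L/v^*\bigr).$$
I expect this crude bound to be insufficient by itself, because its growth rate in $\kappa$ matches that of $c_-(\kappa)$.

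The sharper bound will instead compare with the no-queue configuration. Without a queue, the car density stays below $\rho^*$. Moreover, cars departing at rate close to $M$ travel at speed near $v^*<v_0$. So the travel time of drivers in the bulk exceeds $\ell_0$ by a definite amount, proportional to $L(1/v^*-1/v_0)$, plus a queueing-like delay. The plan is to show that without a queue, $\ell(x)\ge \ell_0+\eta\,(x-a)$ on a large portion of $[a,b]$. This uses Lagrangian mass balance (\ref{Nchar}): arrivals at $L$ occur at flux at most $M$, so the driver departing at time $x$ arrives no earlier than $a+\ell_0+\int_a^x\bar u/M$.

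Combining this with $\Phi^0(b)=c$, $\vp(x)+\psi(x+\ell(x))=c$ and $\psi'\ge\psi'(x_0)>0$ should produce the contradiction. The equal-cost condition forces $\ell$ to be given explicitly by (\ref{ccost}). The mass constraint $\int_a^b\bar u=\kappa$, together with the arrival flux being at most $M$, forces the last arrival to occur at $b+\ell_0\ge a+\ell_0+\kappa/M$. This is consistent by itself. The genuine contradiction should instead come from the concavity of $\ell$ together with $\ell'(a)=-\vp'(a)/\psi'(a+\ell_0)-1$ being bounded for large $c$, which I expect to make $\bar u_x(a)$ in (\ref{bux}) bounded. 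A bounded slope means the density reaches $M$ only after a definite time. Since $\bar u<M$ throughout, near $a$ and $b$ the departure rate is strictly smaller, so it cannot carry $\kappa$ drivers within the width $b-a$. I expect this quantitative step — relating $\kappa$, $b-a$ and the tapering of $\bar u$ near the endpoints — to be the main obstacle.

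Once a queue exists on some interval $[a',c']$, the shock follows directly. At the depletion time $c'$ the effective flux jumps from $M$ down to $\bar u(c'+)<M$, because near $b$ the rate decreases to $0$. The characteristics from $[a',c']$, which have infinite slope $f'(M-)$, meet the faster characteristics issued after $c'$. As in Fig.~\ref{f:tf269}, a shock forms at once and persists up to $t=L$.

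\textbf{Part (ii).} Use (\ref{Phib}): $c\le \min_x\Phi^{max}(x)\to\min\Phi^0$ as $\kappa\to0$, since $\ell_{max}\to L/v^*$ is bounded. Then $b-a$ stays bounded. By (\ref{bux}), $\bar u_x$ near $a$ is bounded in terms of $\vp,\psi$. By the one-sided Lipschitz structure of the explicit construction (\ref{seq}), $\bar u$ is of order $\kappa/(b-a)$, hence smaller than $M$ for small $\kappa$. Therefore the condition in (\ref{eflux}) never holds, and no queue forms. To make the bound $\|\bar u\|_\infty=O(\kappa)$ rigorous, I would use Remark~\ref{r:21}: build a no-queue competitor in $\Q_c$ and show by maximality that $Q^*$ coincides with it.
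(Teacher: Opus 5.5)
\textbf{Part (i).} There is a genuine gap here. What you need, and never obtain, is an upper bound on the equilibrium cost $c$ that does not depend on $\kappa$. Your bound $\ell(\Hat x)\le\kappa/M+L/v^*$ is the right idea with the wrong constant. The term $\kappa/M$ in (\ref{ellmax}) is pure waiting time in the queue, and it disappears under the hypothesis you are trying to refute.

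If there is no queue, a driver departing at any time $x$ (including a hypothetical deviator) enters the highway at once. Since the density on the road never exceeds $\rho^*$, he travels at speed at least $v^*$. Hence $\Phi^{\bar u}(x)\le\vp(x)+\psi(x+L/v^*)$ for every $x$. Since $c\le\Phi^{\bar u}(x)$ for all $x$ in a Nash equilibrium,
\[
c~\le~C_0~\doteq~\min_{x\in\R}\Bigl\{\vp(x)+\psi\bigl(x+L/v^*\bigr)\Bigr\}.
\]
In your notation this gives $c_-(\kappa)\le C_0$, which is impossible for large $\kappa$. Equivalently, as in the paper, all departures lie in the fixed interval $\{\Phi^0\le C_0\}\subset[-K,K]$, so the absence of a queue forces $\kappa\le 2KM$.

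The route you propose instead cannot close the argument. Lower bounds on travel times push $c$ up, which is the wrong direction. A tapering of $\bar u$ near $a$ and $b$ over a bounded width removes only a bounded amount of mass. Meanwhile, with your crude bound on $c$, the interval $[a,b]$ may have length comparable to, or larger than, $\kappa/M$. In addition, (\ref{bux}) presupposes smoothness at $a$, and $\ell'(a)$ need not stay bounded as $a\to-\infty$ (e.g.~if $\psi'(x)\to0$ as $x\to-\infty$). Your shock argument at the depletion time is fine and matches the paper.

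\textbf{Part (ii).} First, the claim $\min\Phi^{max}\to\min\Phi^0$ is false, since $\ell_{max}\to L/v^*>\ell_0$. To get $c\to\min\Phi^0$ you need the sharper bound $y^{\bar u}(x)-x\le h^*(\kappa)\to\ell_0$, which follows from (\ref{supco}).

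More importantly, the sup-norm bound on $\bar u$ is never proved:
\begin{itemize}
\item The claim that $\bar u$ is of order $\kappa/(b-a)$ is unsupported; both quantities vanish as $\kappa\to0$.
\item $\|\bar u\|_{\L^\infty}=O(\kappa)$ cannot hold, because $\kappa\le(b-a)\|\bar u\|_{\L^\infty}$ and $b-a\to0$.
\item The recursion (\ref{seq}) yields no such estimate.
\item Constructing a no-queue competitor in $\Q_c$ that coincides with $Q^*$ amounts to solving the equilibrium problem itself.
\end{itemize}

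The missing tool is Oleinik's estimate. The bound $u(L,s_2)-u(L,s_1)\le(s_2-s_1)/(\omega_0L)$, together with $\int u(L,s)\,ds=\kappa$, gives $\|u(L,\cdot)\|_{\L^\infty}\le\sqrt{2\kappa/(\omega_0L)}$. Then (\ref{massi2}), namely $\bar u(x)=y'(x)\,u(L,y(x))$, applies with $y'=-\vp'/\psi'(y)$ bounded by some $R_0$ on the bounded support. This gives $\bar u\le R_0\sqrt{2\kappa/(\omega_0L)}<M$ for small $\kappa$.

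Alternatively, if by ``one-sided Lipschitz'' you had in mind the estimate (\ref{uLip}), then combined with $\bar u=0$ to the left of $a$ it yields $\bar u\le C(b-a)$. That works once $b-a\to0$ has been established via the $h^*$ bound above.
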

\v
{\bf Proof.} 
{\bf 1.} In this first step we claim that, if the total amount of cars $\kappa$ is sufficiently large,
then a queue must form at the entrance of the road.

Indeed, observe that
 the car speed is bounded above and below:
$$v(t,x)~\in~[v^*, v_0].$$
Hence the travel time  by any driver satisfies
$$T~\in~\left[ {L\over v_0}\,,~{L\over v^*}\right].$$

By the assumptions {\bf(A2)}, there exists a departure time $x_0$ such that
\bel{x0def}x_0~=~\hbox{arg}\!\min_{x\in \R} \left\{ \vp(x) + \psi\Big(x+{L\over v^*} \Big)\right\}.\eeq

Any driver can choose to depart at time $x_0$.  If no queue is present, this driver will immediately enter the 
highway, without having to wait.
In this case, he will arrive at time $x^{arrival} \leq x_0 +{L\over v^*} $.   Hence his total cost will
be 
$$\Phi(x_0)~\leq~C_0\doteq ~ \vp(x_0) + \psi\Big(x_0+{L\over v^*} \Big).$$

On the other hand,  still by {\bf (A2)},  we can choose a constant $K$ large enough so that 
$$\vp(x)+ \psi\Big(x+{L\over v_0} \Big)~>C_0\qquad\qquad\forall x\notin [-K, K].$$
That means: every driver who departs at a time $x\notin [-K, K]$ must pay a total cost $> C_0$.   
This implies that in a Nash equilibrium all drivers must depart at a time $x\in [ -K,K]$.

But if the total number of drivers $\kappa$ satisfies
\bel{kk+}\kappa~>~\kappa^+~\doteq~2K \rho^*v(\rho^*),\eeq
this is  not possible.    Since the maximum flux at the entrance of the highway is $M=\rho^* v(\rho^*)$, the right hand side is the maximum number of drivers that can depart
during a time interval of length $2K$.   We thus reached a contradiction.
\v
{\bf 2.} By the previous step, if (\ref{kk+}) holds the flux $\bar u(x)$ entering the highway reaches the maximum value $M=
\rho^* v(\rho^*)$ at some time $\bar x$ (see Fig.~\ref{f:tf260}).   
Consider the time
$$x^\sharp~\doteq~\sup~\bigl\{ x\geq \bar x\,;~\bar u(x)= M\bigr\}.$$
Since the characteristic speed satisfies $f'(M)=+\infty$, as shown in Fig.~\ref{f:tf261} this 
implies that the solution contains a shock, originating at the point $x^\sharp$.
This proves part (i) of the Proposition.
\v

\begin{figure}[htb]
   \centering
 \includegraphics[width=0.6\textwidth]{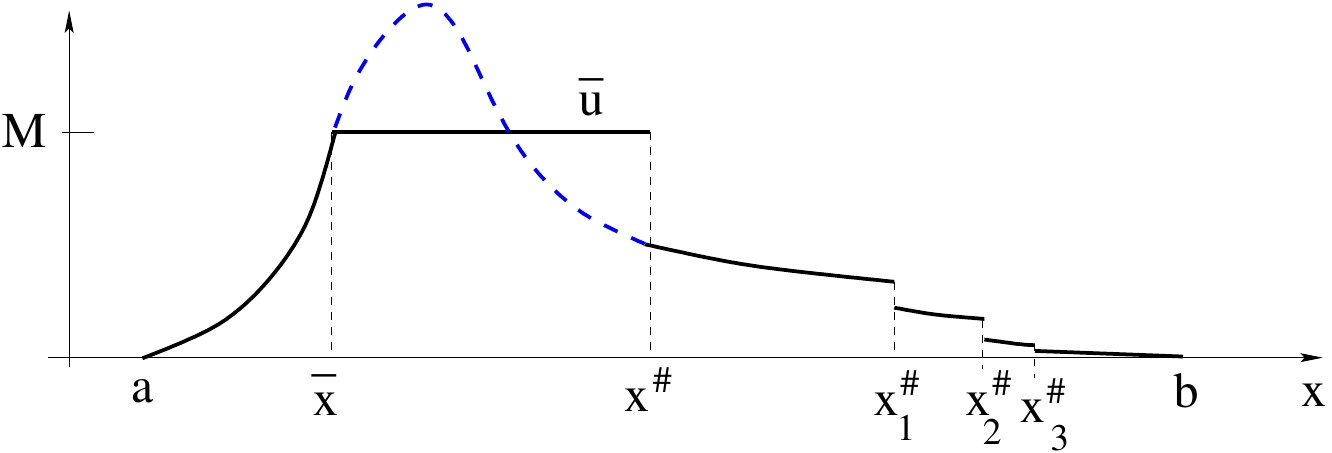}
   \caption{\small  A departure rate in a Nash equilibrium. The dashed curve is the rate at which drivers start their
journey, possibly joining the queue at the entrance of the highway.  The solid curve is the flux of cars entering the highway.  The queue is formed at time $\bar x$ and depleted at time $x^\sharp$.    The departure rate has 
downward jumps at points $x_1^\sharp<x_2^\sharp<\cdots$, determined as in Fig.~\ref{f:tf261}.}
   \label{f:tf260}
\end{figure}

\begin{figure}[htb]
   \centering
 \includegraphics[width=0.6\textwidth]{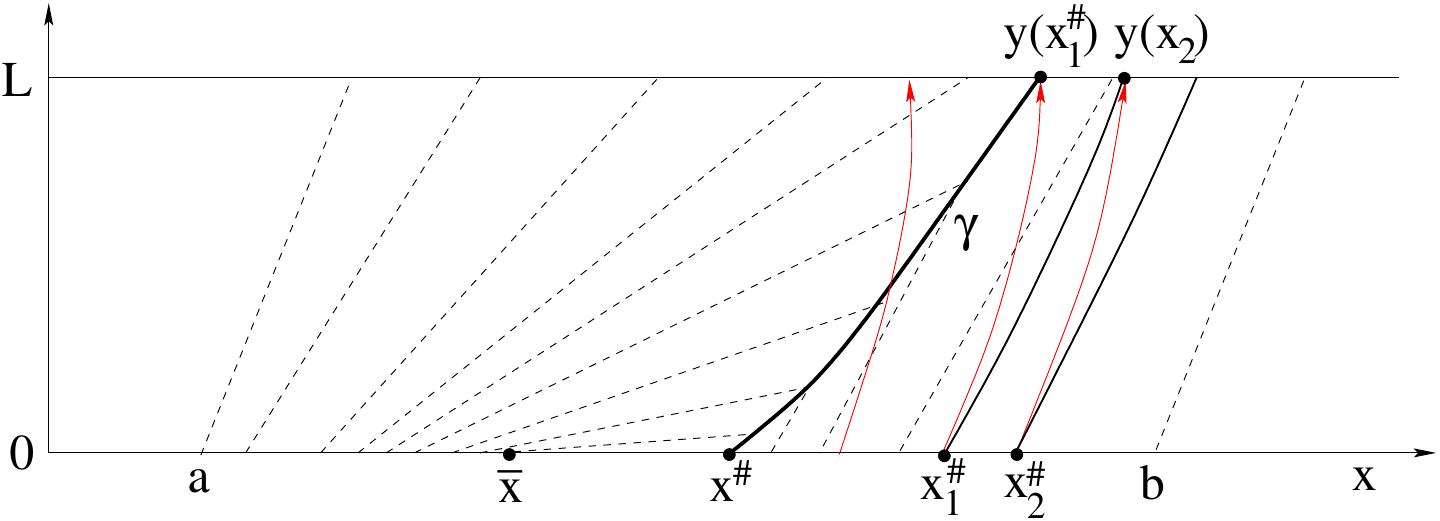}
   \caption{\small  The characteristic lines in the solution with initial data $\bar u$= flux entering the highway at time $x$, as in Figure~\ref{f:tf260}.  Here a queue is formed at time $\ov x$ and depleted at time $x^\sharp$.
   At time $ x^\sharp$ a shock is formed, moving along the curve $\gamma$. Here $x_1^\sharp$ is the 
   departure time of the driver whose arrival time $y(x_1^\sharp)$ coincides with the terminal point along the shock.
 By the necessary conditions (\ref{Nase}), that the departure rate has a downward jump at a point $x_1^\sharp$.
 Hence a second shock initiates at this point. By induction, if $\gamma_j$ is the shock curve starting at $x_j^\sharp$, let $x_{j+1}^\sharp$ be the   departure time of the driver whose arrival time $y(x_{j+1}^\sharp)$ coincides with the terminal point along $\gamma_j$.  By (\ref{Nase}) the departure rate has a 
 further downward jump at a point $x_{j+1}^\sharp$. }
   \label{f:tf261}
\end{figure}

{\bf 3.} To prove (ii) we observe that the
formula (\ref{arrtime}) for the arrival time  implies the sharper uniform bound on the travel time:
\[
\ell_0~\le~ y^{\bar u}(x)-x~\le ~h^*(\kappa).
\]
Since $h^*(\kappa)\to\ell_0$ as $\kappa\downarrow 0$, we have
\[
0~\le~\Phi^{\bar u}(x)-\Phi^0(x)
~\le ~L_\psi\bigl(h^*(\kappa)-\ell_0\bigr)~\to~0.
\]
The common cost to all drivers in the Nash equilibrium is the global minimum of $\Phi^{\bar u}$, so
\[
\min\Phi^0\le c(\kappa)
\le\min\Phi^0+L_\psi\bigl(h^*(\kappa)-\ell_0\bigr).
\]
By the strict convexity and the coercivity, the function  $\Phi^0$ has a unique minimizer $x_0$. 
As $\kappa\downarrow 0$, the support of $\bar u$ (i.e., the set of  times when some driver actually departs) must be contained 
in a small neighborhood of $x_0$. All the  arrival times lie near $x_0+\ell_0$.

The Oleinik estimate yields $u(L, s_2)-u(L,s_1)\le(s_2-s_1)/(\omega_0 L)$. If $u(L,s)=r>0$, then
\[
u(L, s-h)~\ge ~r-\frac{h}{\omega_0 L},\qquad
0\le h\le r\omega_0 L.
\]
Integrating both sides of this inequality, since $\int u(L,s)\, ds=\kappa$ we obtain
\[
\bigl\|u(L,\cdot)\bigr\|_{\L^\infty}~\le~\sqrt{\frac{2\kappa}{\omega_0 L}}\,.
\]
On the support of the departure rate $\bar u$, the derivative $y'(x)= -\varphi'(x)/\psi'(y(x))$ is bounded by a constant $R_0$, 
uniformly valid for all  $\kappa>0$ sufficiently small. Recalling (\ref{massi2}) one obtains
\[
\bar u(x)~=~y'(x)u(L,y(x))~
\le ~R_0\sqrt{\frac{2\kappa}{\omega_0L}}
\quad\text{a.e.}
\]
By choosing
\[
\kappa\,<\,\frac{M^2\omega_0L}{2R_0^2}
\]
the above inequality yields $\bar u(x)<M$
showing that the departure rate must be $< M$ almost everywhere. Hence no queue forms at the entrance of the highway.
\endproof

\begin{remark} {\rm The necessary conditions (\ref{beq}) and (\ref{massi2}) imply that if the terminal profile $u(L,\cdot)$
has a jump at a point $y(x^\sharp)$, then the initial departure rate $\bar u$ must have a jump at $x^\sharp$.
In particular, this happens if $y(x^\sharp)$ is the terminal point of the curve $\gamma$ 
along which the solution has a shock, i.e., a downward jump. See Figures~\ref{f:tf260} and \ref{f:tf261}.}
\end{remark}

We observe that Part (ii) does not rule out the presence of a shock in the solution.

\section{Generic failure of regularity of the Nash equilibrium}
\label{sec:8}
\setcounter{equation}{0}

Let $\bar u$ be the departure rate in a Nash equilibrium where each driver pays the same total cost $c$.   In this setting,  the total number of drivers depends on $c$.  
 It is strictly positive as soon as 
 \bel{cmin} c~>~\min_{x \in \R}\varphi(x) + \psi (x+\ell_0).\eeq
As before, for every $x \in [a,b]$ we denote by $y(x)=x+\ell(x)$ the arrival time of the driver starting at time $x$. 
According to (\ref{ccost}), this function is implicitly defined by the identity
\bel{yxdef}
\vp(x) + \psi\bigl(y(x)\bigr)\,=\,c\qquad \forall x\in [a,b].\eeq
Notice that under the assumptions {\bf (A2)}, we have that $y(x)= \psi^{-1}(c-\varphi(x))$ is a $\C^2$ concave function, with $\C^2$ inverse.

In this section we prove two main results: the first states the one-sided Lipschitz regularity of
$\bar u$,  the second states that Nash equilibria are generically not continuous.

\begin{theorem}\label{t:81}
Let $f, \varphi, \psi$ be as in {\bf (A1)-(A2)} and, for a given $c$ as in (\ref{cmin}), let $\bar u$ be the Nash equilibrium departure rate where every driver has the same total cost $c$.
Then $\bar u$ is bounded with  compact support. Moreover, the following one-sided
Lipschitz estimate holds: 
\bel{uLip}
\bar u(x_2)-\bar u (x_1)~ \le~ \left(M\|y''\|_{\L^\infty} + {1\over \omega_0 L} \|y'\|^2_{\L^\infty}\right)(x_2-x_1) \qquad \text{for a.e. } x_1<x_2.
\eeq
where $\omega_0$ is the constant in (\ref{fprop}).
\end{theorem}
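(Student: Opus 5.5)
The plan is to use the mass-balance identity (\ref{massi2}), written in integrated form, together with the Oleinik one-sided estimate for the entropy solution at $t=L$. First I record the elementary facts. By (\ref{yxdef}) and {\bf (A2)}, $y(x)=\psi^{-1}(c-\vp(x))$ is $\C^2$ and concave on $[a,b]$, with $y'>0$. Its derivatives $y'$ and $y''$ are bounded on $[a,b]$ because $\vp',\vp''$ are bounded there and $\psi'>0$ is bounded away from zero on the compact range of $y$. Compact support of $\bar u$ follows as in Section~\ref{sec:5}: all departures occur in a bounded interval $[a,b]$, since outside some $[-K,K]$ the lower bound $\Phi^0$ exceeds $c$.

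For boundedness, note that the Nash condition gives $\ov U(x)=U(L,y(x))$ for $x\in[a,b]$. Differentiating, for a.e.\ $x$,
$$\bar u^e(x)=y'(x)\,u(L,y(x)).$$
A priori $\bar u$ may exceed $M$ where a queue forms, so this identity should be applied to the departure distribution itself, $\ov U$ in (\ref{Uin}). The arrivals involve only $\bar u^e$, but the identity $\ov U(x)=U(L,y(x))$ still holds, since driver labels are counted by departures. Hence
$$\bar u(x)=y'(x)\,u(L,y(x))\le M\,\|y'\|_{\L^\infty}\qquad\hbox{for a.e.~}x.$$

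The main step is the one-sided Lipschitz bound. Write $w(s)\doteq u(L,s)$. By (\ref{uxdec}), or equivalently the Oleinik estimate already used in the proof of Proposition~\ref{p:71},
$$w(s_2)-w(s_1)\le {s_2-s_1\over \omega_0 L}\qquad\hbox{for } s_1<s_2.$$
For a.e.\ $x_1<x_2$ in $[a,b]$ I split
$$\bar u(x_2)-\bar u(x_1)=\bigl(y'(x_2)-y'(x_1)\bigr)w(y(x_2))+y'(x_1)\bigl(w(y(x_2))-w(y(x_1))\bigr).$$
For the first term, concavity gives $y'(x_2)-y'(x_1)\le 0$, so the term is nonpositive; in any case it is bounded by $\|y''\|_{\L^\infty}(x_2-x_1)\,M$, using $0\le w\le M$. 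For the second term, $y'(x_1)>0$, and by Oleinik it is at most $y'(x_1)\,(y(x_2)-y(x_1))/(\omega_0L)\le \|y'\|^2_{\L^\infty}(x_2-x_1)/(\omega_0 L)$. Adding the two bounds gives (\ref{uLip}).

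The points outside $[a,b]$ need separate handling. There $\bar u=0$, so the estimate is trivial when $x_2\notin[a,b]$. When $x_1<a\le x_2$, I replace $x_1$ by $a$ and use $\bar u(a+)=0$: apply the inequality on $[a+\epsilon,x_2]$ with $y'(a)w(y(a))$ and $w(y(a))=w(a+\ell_0)=0$, taking a limit along Lebesgue points.

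The main obstacle I expect is justifying the pointwise identity $\bar u=y'\,w\circ y$ a.e., and in particular that $w$ is well defined at the points $y(x)$. I would take $w$ to be the right-continuous representative, which exists because by Oleinik $w(s)-s/(\omega_0L)$ is nonincreasing. Since $y$ is a $\C^1$ diffeomorphism, composition preserves null sets, so the chain rule applied to $\ov U=U(L,\cdot)\circ y$ holds a.e. This requires knowing that every $x\in[a,b]$ is a departure time with cost exactly $c$, i.e.\ $\ov U$ is strictly increasing there; this I would take from the construction in \cite{BH1} and Section~\ref{sec:5}.
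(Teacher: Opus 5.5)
Your argument is correct and is essentially the paper's proof. Both use the identity $\ov U(x)=U(L,y(x))$, the bound $u(L,\cdot)\le M$, Oleinik's estimate at $t=L$, and the same two-term splitting. The one difference is technical: the paper avoids choosing a pointwise representative by working with the difference quotients $\delta^{-1}\bigl(\ov U(\bar x+\delta)-\ov U(\bar x)\bigr)$, integrating in $\bar x\in[x_1,x_2]$ and letting $\delta\to 0$.

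Your observation that concavity of $y$ makes the first term nonpositive is right, and it applies equally to the paper's computation, so the $M\|y''\|_{\L^\infty}$ term is superfluous. You should delete the stray display $\bar u^e=y'\,u(L,y(\cdot))$, which is false when a queue is present; only $\bar u=y'\,u(L,y(\cdot))$ holds.
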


{\bf Proof.}
For any $\bar x\in [a,b]$ and $\delta>0$, one has 
\bel{e:estQ}
\ov U(\bar x + \delta) - \ov U(\bar x) ~= ~\int_{y(\bar x)}^{y(\bar x + \delta)}u(L,x) dx.
\eeq
Since $u(L,x)\leq M$ for all $x$, this implies that 
$\ov U$ is Lipschitz and 
$$0~\leq~\ov U_x(x) ~\leq~M\, \|y'\|_{\L^\infty}\qquad\forall\quad x \in [a,b].$$

Since $u$ solves the conservation law (\ref{claw1}) with strictly convex flux, by Oleinik's one-sided Lipschitz estimate, for every $ x \in \R$ it holds
\[
u\bigl(L,y( x +\delta)\bigr)-u\bigl(L,y( x)\bigr) ~\le~ { y( x +\delta)-y( x)\over \omega_0 L}~ \le~ \frac{\|y'\|_{\L^\infty}}{\omega_0 L} \delta.
\]
Differentiating \eqref{e:estQ} w.r.t.~$\bar x$, we obtain  
\[
\bega{rl}
\ds \left( \frac{\ov U(\bar x + \delta)-\ov U(\bar x)}\delta\right)' & \le ~\ds\frac1\delta\Big(u\bigl(L,y(\bar x + \delta)\bigr)y'(\bar x + \delta) - u\bigl(L,y(\bar x)\bigr)y'(\bar x)\Big) \\[4mm]
&\ds =~ \frac1\delta\Big(u(L,y(\bar x + \delta))y'(\bar x + \delta) - u(L,y(\bar x + \delta))y'(\bar x)\Big)\\[4mm]
&\qquad \ds+   {1\over\delta}\Big(u(L,y(\bar x + \delta))y'(\bar x)-   u(L,y(\bar x))y'(\bar x) \Big)\\[4mm]
  &\ds \le ~ M \|y''\|_{\L^\infty} + \|y'\|_{\L^\infty}^2{1\over \omega_0 L}\,.
\enda
\]
An integration  over the interval $\bar x\in [x_1, x_2]$ yields
$$
 \frac{\ov U( x_2 + \delta)-\ov U(x_2)}\delta-
 \frac{\ov U(x_1 + \delta)-\ov U(x_1)}\delta~\leq~ \left( M \|y''\|_{\L^\infty} +{ \|y'\|_{\L^\infty}^2\over \omega_0 L}\right)(x_2-x_1).$$
 Taking the limit as $\delta\to 0$ we obtain (\ref{uLip}).
\endproof

The previous result implies that $\bar u$ has bounded variation and at most countably many jump discontinuities.
It also implies that for small total cost $c$ there is no queue. Indeed if 
$$(b-a) \left(M\|y''\|_{\L^\infty} + {\|y'\|^2_{\L^\infty} \over \omega_0 L}\right) ~<~M,$$ then $\bar u(x)< M$ for every $x \in \,]a,b[$.
The next result states that this regularity is generically optimal.

\begin{theorem}\label{t:82}
For every $f$ as in {\bf (A1)} and $c\in\R$, the set $\mathcal J(c)$ of couples $(\varphi, \psi)$ for which the rate of departures $\bar u$ of the Nash equilibrium has a jump type discontinuity is an open dense set in 
\[
\left\{ (\varphi, \psi) \in \C^2(\R)\,;~~ \hbox{{\bf (A2)} holds and}~ \min_{x\in \R}\ \varphi(x) + \psi(x+\ell_0) < c  \right\},
\]
w.r.t.~the $\C^2$ topology.
\end{theorem}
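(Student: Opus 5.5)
The plan is to prove openness and density separately. Throughout, the Nash equilibrium departure rate $\bar u$ is viewed through the discrete dynamics (\ref{seq}), or equivalently through the necessary conditions (\ref{beq})--(\ref{Nase}).

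Suppose the equilibrium has no jump. By Theorem~\ref{t:81}, $\bar u$ is one-sided Lipschitz and of bounded variation. If it is continuous, no shock can reach $t=L$ inside the arrival interval: by the Remark after Proposition~\ref{p:71}, a jump in $u(L,\cdot)$ at $y(x^\sharp)$ forces a jump of $\bar u$ at $x^\sharp$. So the solution is smooth along all characteristics used, and the sequence $(x_n,u_n)$ produced by (\ref{seq}) from any $(x_0,\bar u(x_0))$ satisfies both limits in (\ref{limn}). Hence the orbit lies simultaneously on the unstable manifold $W^u(A)$ of the saddle $A=(a,0)$ and on the stable manifold $W^s(B)$ of the saddle $B=(b,0)$. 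The linearizations (\ref{matr1}) and (\ref{matr2}) show that these are one-dimensional $\C^1$ curves, and that they depend continuously in $\C^1$ on $(\vp,\psi)$ in the $\C^2$ topology. The map (\ref{seq}) involves $\ell$ and $\ell'$, hence $\vp',\psi'$, so $\C^2$ data give a $\C^1$ map. Continuity of $\bar u$ is therefore equivalent to the coincidence of two branches, $W^u(A)=W^s(B)$, i.e.\ a saddle connection.

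\textbf{Density.} Given $(\vp,\psi)$ with continuous equilibrium, perturb $\psi$ (or $\vp$) in $\C^2$ near a single arrival time $y(x_0)$. The aim is to change $\ell$ only in a small window $I$ that is met by at most one point of a fundamental domain of the orbit. Near $A$ the unstable manifold is then unaffected for $x<\inf I$. After one step through $I$, however, the image point is displaced transversally, say its $u$-component changes at first order while $x_n$ stays fixed, and this displacement comes from the second equation of (\ref{seq}) through $\ell'$. This breaks the saddle connection, since the stable manifold of $B$, which is computed backward from $b$, is unchanged on the part of phase space beyond $I$. Convexity constraints in (A2) are open conditions, so small $\C^2$ bumps preserve them, and the total cost condition remains valid.

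It then remains to argue that once the connection fails, the Nash equilibrium of Remark~\ref{r:21}, built via $Q^*$, really has a jump. The equilibrium follows $W^u(A)$ starting from $a$, because the first part of the solution is smooth and is forced by (\ref{seq}). To end at $\bar u(b)=0$ while being one-sided Lipschitz (so only downward jumps are allowed), it must leave that curve, which produces a jump in $\bar u$. I expect this to be the main obstacle. One needs a rigorous statement that a continuous equilibrium necessarily follows the smooth characteristic dynamics, ruling out, for instance, rarefaction-like regions where the Oleinik bound is attained. One must also check that the perturbation genuinely moves $W^u(A)$ relative to $W^s(B)$, which is a Melnikov-type first-order computation using (\ref{lpr}).

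\textbf{Openness.} If $\bar u$ has a jump, then the first jump point $x^\sharp$ occurs where $W^u(A)$ and $W^s(B)$ fail to coincide, with the tangency to $\ov U$ at two points as in (\ref{cond1})--(\ref{cond2}). Non-coincidence of two invariant curves at a given compact fundamental domain is a strict condition: their distance there is positive. By the continuous dependence of these manifolds on $(\vp,\psi)$ in $\C^2$, it persists under small perturbations, and by the argument above the perturbed equilibria are again discontinuous. Continuity of the equilibrium with respect to the data, via Theorem~\ref{t:41} and the characterization (\ref{Q*def}), supports the passage to nearby $(\vp,\psi)$.
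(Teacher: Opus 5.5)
Your density argument is essentially the paper's. A $\C^2$-small bump $\ve\tilde\psi$ supported near $y(\bar x)$, in a window $I$ that meets each orbit at most once, leaves the two relevant invariant branches unchanged on either side of $I$. The single step across $I$ is then displaced at first order in the $u$-direction. The paper works with the backward map $g$ in (\ref{e:defg}) and a displacement $(0,\ve)+\O(\ve^2)$. Your forward version is realized by $\tilde\psi(y(\bar x))=0$, $\tilde\psi'(y(\bar x))\neq 0$: this gives $y_\ve(\bar x)=y(\bar x)$, hence $x_n=\bar x$ exactly, and $y'_\ve(\bar x)\neq y'(\bar x)$, so the ``Melnikov-type'' check you postpone is a one-line computation. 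The obstacle you anticipate, that a continuous equilibrium follows the characteristic dynamics, is exactly Lemma~\ref{l:contNash}. There, $\bar u(x)=y'(x)u(L,y(x))$ makes $u(L,\cdot)$ continuous, non-crossing of forward characteristics gives the lower Lipschitz bound, and (\ref{e:constraints-smooth})--(\ref{e:stable-unstable}) follow. With this lemma, ``connection broken $\Rightarrow$ jump'' is just its contrapositive, and you need no separate argument about ``leaving the curve''.

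The genuine gap is in the openness part. You start from ``jump $\Rightarrow$ the branches do not coincide'', which is the converse of what you derive. A saddle connection need not be a graph over $]a,b[$, nor produce a solution with non-crossing characteristics, so you have not shown that it yields a continuous equilibrium. More seriously, non-coincidence of two global invariant branches is not an open condition. Distinct branches can still intersect (e.g.\ transversally, since $g$ is area preserving with $\det Dg=1$), so ``their distance is positive'' is false in general. Continuous dependence of the manifolds on compact pieces also gives no control over connecting orbits of nearby systems, whose profiles could steepen and converge to a jump. $\L^1$-continuity of the equilibrium in the data does not help either, since a function with a jump is an $\L^1$ limit of continuous ones.

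The missing ingredient is the a priori bound (\ref{e:Lipu}). Every \emph{continuous} equilibrium is Lipschitz with a constant depending only on $\|y'\|_{\L^\infty}$, $\|y''\|_{\L^\infty}$, $M$, $\omega_0$ and $L$, hence uniform over a $\C^2$-neighborhood of $(\vp,\psi)$. So suppose $(\vp_n,\psi_n)\to(\vp,\psi)$ in $\C^2$ with continuous equilibria $\bar u_n$. These are equi-Lipschitz, so by Arzel\`a--Ascoli and continuous dependence of the equilibrium on the data, the limit $\bar u$ is Lipschitz. Hence the complement of $\mathcal J(c)$ is closed. This is how the paper proves openness.
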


In order to prove the result above we provide necessary conditions on the cost functions $\varphi, \psi$ for having continuous Nash equilibria $\bar u$ and then prove that generically are not satisfied.
We observe that it suffices to consider the case where $\bar u(x)<M$ for all $x$, 
since otherwise $\bar u$ has a jump type discontinuity.

To state the necessary conditions for the continuity of $\bar u$, we consider the map $g: \Omega\subset \R^2 \to \R^2$ defined by
\bel{e:defg}
g(x,v)~\doteq~ \left(y(x) - Lf'\Big(\frac{v}{y'(x)}\Big),~ \frac{v}{y'(x)}\right), 
\eeq
where $y$ is extended to a $\C^2$ concave function with $\C^2$ inverse on the whole $\R$ and 
\bel{Omdef}\Omega~\doteq ~\left\{ (x,v): \frac{v}{y'(x)}<M \right\}.\eeq
The Jacobian matrix of $g$ is 
\bel{Dg}
Dg(x,v)~=~
\begin{pmatrix}
y'(x) +L f''(\frac{v}{y'(x)})\frac{y''(x)v}{(y'(x))^2}&& -L f''(\frac{v}{y'(x)})\frac1{y'(x)}\\[3mm]
-\frac{vy''(x)}{(y'(x))^2} && \frac1{y'(x)}
\end{pmatrix}
\eeq

\begin{lemma}
The map $g$ is a $\C^1$ diffeomorphism on its image and has  two hyperbolic fixed points $(a,0)$ and $(b,0)$ with $a,b$ such that $\ell(a)=\ell(b)=\ell_0$. 
The Jacobian matrix $Dg(a,0)$ has eigenvalues  $\frac1{y'(a)}, y'(a)$, while the matrix $Dg(b,0)$  has eigenvalues  $\frac1{y'(b)}, y'(b)$.
Since $0<y'(b)=\frac{-\varphi'(b)}{\psi'(b+\ell_0)}<1<y'(a)=\frac{-\varphi'(a)}{\psi'(a+\ell_0)}$, the two points $(a,0), (b,0)$ are hyperbolic.
\end{lemma}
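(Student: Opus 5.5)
We prove the lemma in three steps: $g$ is a $\C^1$ diffeomorphism onto its image, its fixed points are $(a,0)$ and $(b,0)$, and the Jacobian there has the stated eigenvalues.

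\textbf{Diffeomorphism.} Smoothness of $g$ is immediate on $\Omega$. There $v/y'(x)<M$, $f\in\C^3([0,M[)$, and $y\in\C^2$ with $y'>0$. Hence $g\in\C^1(\Omega)$, with Jacobian (\ref{Dg}).

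Injectivity comes from an explicit inverse. Write $(z,w)=g(x,v)$. The second component gives $w=v/y'(x)$, so the first reads $z=y(x)-Lf'(w)$. Since $y$ is a $\C^2$ diffeomorphism of $\R$, we can solve
\[
x=y^{-1}\bigl(z+Lf'(w)\bigr),\qquad v=w\,y'(x).
\]
This is a $\C^1$ map on the image $\{(z,w): w<M\}$, restricted to those $(z,w)$ for which $z+Lf'(w)$ lies in the range of $y$; that range is all of $\R$ after the extension. So $g$ is a $\C^1$ diffeomorphism onto its image.

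As a cross-check, the determinant of (\ref{Dg}) is
\[
\det Dg=\Bigl(y'+Lf''\tfrac{y''v}{(y')^2}\Bigr)\tfrac1{y'}-Lf''\tfrac{1}{y'}\cdot\tfrac{vy''}{(y')^2}=1,
\]
so $g$ is even area preserving.

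\textbf{Fixed points.} A fixed point $(x,v)$ satisfies $v=v/y'(x)$. Because $y'(x)=1$ only at the single point $\Hat x$ of maximal journey time, this forces $v=0$ unless $x=\Hat x$.

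For $v=0$, the first component requires $x=y(x)-Lf'(0)=y(x)-\ell_0$, i.e.\ $\ell(x)=\ell_0$, which gives the roots $x=a$ and $x=b$. The fixed points $(a,0)$ and $(b,0)$ are the ones the lemma asserts.

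At $x=\Hat x$ the first component would require $\Hat x=y(\Hat x)-Lf'(v)$. Since $\ell(\Hat x)>\ell_0$, this admits a solution with $v>0$. That point need not be excluded, because the lemma only claims that these two hyperbolic fixed points exist.

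\textbf{Eigenvalues.} At $v=0$ the lower-left entry $-vy''/(y')^2$ of (\ref{Dg}) vanishes, and the upper-left entry reduces to $y'(x)$. Thus $Dg(a,0)$ is upper triangular with diagonal $y'(a)$ and $1/y'(a)$; the same holds at $b$.

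From (\ref{yxdef}) we get $y'(x)=-\vp'(x)/\psi'(y(x))$, and $y(a)=a+\ell_0$, $y(b)=b+\ell_0$. By (\ref{lpr}) and the endpoint signs established in Section~\ref{sec:5}, $\ell'(a)>0>\ell'(b)$, i.e.\ $y'(a)>1>y'(b)>0$. Hence neither eigenvalue has modulus one, and both fixed points are hyperbolic saddles.

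We expect the only delicate point to be the domain and global-invertibility issue: making sure the $\C^2$ concave extension of $y$ has $y'>0$ everywhere, so that $y^{-1}$ is globally defined and the inverse formula is valid on the whole image. We would handle this by choosing the extension with $y'$ bounded between two positive constants outside $[a,b]$.
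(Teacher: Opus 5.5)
Your argument is correct and is essentially the paper's. Injectivity comes from reading $w=v/y'(x)$ off the second component and then inverting $y$; your explicit inverse is the constructive form of this, combined with $\det Dg\equiv 1$ and the upper-triangular form of $Dg$ at $v=0$, which gives eigenvalues $y'$ and $1/y'$. Your fixed-point check goes beyond the paper's proof and is right: there is also a third fixed point $(\Hat x,\hat v)$ with $Lf'(\hat v)=\ell(\Hat x)$, so the lemma should be read as naming the two hyperbolic fixed points, not all fixed points.
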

{\bf Proof.}
The Jacobian matrix of $g$ has determinant
equal to $1$ for every $(x,v)\in \Omega$. 

To prove that the map $g= (g_1,g_2)$ is one-to-one, we observe that $g(x_1,v_1)=g(x_2,v_2)$ implies $\frac{v_1}{y'(x_1)}=\frac{v_2}{y'(x_2)}$, hence $y(x_1)=y(x_2)$.
Since $y$ is invertible this implies $x_1=x_2$ and hence $v_1=v_2$.
%
The computation of eigenvalues is straightforward.
\endproof

Given $\delta>0$ sufficiently small, we denote respectively by $W^s_\delta(a,0)$ and $W^u_\delta(a,0)$, 
 the intersections of the local stable and unstable manifolds through the point $(a,0)$,
 with the ball  $B_\delta((a,0))$ centered at $(a,0)$ with radius $\delta$. 
Similarly, 
$W^s_\delta(b,0)$, $W^u_\delta(b,0)$  will denote the local stable and unstable manifolds through $(b,0)$, respectively.

\begin{remark}{\rm
It is easy to check that $W^u_\delta(a,0) = \bigl\{ (x,0) ;\,  a-\delta<x<a+ \delta\bigr\}$ and $W^s_\delta(b,0) = \bigl\{ (x,0) ;\, b-\delta<x<b+ \delta\bigr\}$. 
For our purpose, the relevant  manifolds are $W^s_\delta(a,0)$ and $W^u_\delta(b,0)$.}
\end{remark}

\begin{lemma}\label{l:contNash}
Assume that the departure rate $\bar u(\cdot)$ corresponding to a Nash equilibrium 
is continuous.
Then $\bar u$ is Lipschitz continuous with Lipschitz constant
\bel{e:Lipu}
\mathrm{Lip}(\bar u) ~\le ~M\|y''\|_{\L^\infty} + {1\over \omega_0 L} \|y'\|^2_{\L^\infty} \,.
\eeq
Moreover there is a continuous increasing function $z:[a,b]\mapsto [a,b]$ such that
\begin{equation}\label{e:constraints-smooth}
\left\{\bega{rl}
y(x)&= ~z(x) + Lf'\bigl(\bar u(z(x))\bigr), \\[2mm]
\bar u (x) &= ~y'(x)\bar u (z(x)),
\enda\right.\qquad\qquad\forall x\in [a,b].
\end{equation}
In particular for every $x \in \,]a,b[\,$ one has $g\bigl(x,\bar u (x)\bigr)=
\bigl(z(x), \bar u (z(x))\bigr)$ and 
\bel{e:stable-unstable}
\begin{split}
W^s_+(a,0) \doteq &~  \bigcup_{ n \in \N} g^{-n}\bigl(W^s_\delta(a,0)\cap \{v>0\}\bigr)~ =~ \bigl\{(x,\bar u(x));\, x \in \,]a,b[\bigr\},\\
W^u_+(b,0) \doteq &~  \bigcup_{ n \in \N} g^{n}\bigl(W^u_\delta(b,0)\cap \{v>0\}\bigr)
~ = ~\big\{(x,\bar u(x)): x \in \,]a,b[\big\}.
\end{split}
\eeq
\end{lemma}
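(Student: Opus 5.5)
The plan has three parts. First, the Lipschitz bound will come from Theorem~\ref{t:81}. Second, I will derive the two identities (\ref{e:constraints-smooth}) from the Lax-formula characterization of the arrival time. Third, I will identify the graph of $\bar u$ with the stable and unstable manifolds, using the dynamics of the map $g$.

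\textbf{Step 1 (Lipschitz bound).} Theorem~\ref{t:81} gives the one-sided estimate (\ref{uLip}) with constant $C=M\|y''\|_{\L^\infty}+\|y'\|^2_{\L^\infty}/(\omega_0L)$. A function satisfying $\bar u(x_2)-\bar u(x_1)\le C(x_2-x_1)$ can still jump downward, so continuity is needed to exclude this. Suppose $\bar u(x_1)-\bar u(x_2)>C'(x_2-x_1)$ for some $C'>C$. I will use the Nash structure to transfer the decrease of $\bar u$ to the arrival profile $u(L,\cdot)$, through $\bar u(x)=y'(x)u(L,y(x))$ in (\ref{massi2}). A continuous $\bar u$ forces $u(L,\cdot)$ to be continuous on $y([a,b])$, because $y'$ is continuous and positive. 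For an entropy solution with strictly convex flux, a continuous terminal profile at $t=L$ means no shock reaches $t=L$. Hence every backward characteristic from $(L,y)$ is unique and genuine, and $u(L,\cdot)$ is obtained from $\bar u$ by the characteristic map. I will then redo the computation in the proof of Theorem~\ref{t:81} with the time direction reversed. Equivalently, for a classical solution on $[0,L]\times\R$, a decrease of $u(L,\cdot)$ propagates backward along characteristics, which is consistent with (\ref{uxL}). This should give the same constant for the reverse inequality. If that computation does not close, I will fall back on the two-sided formula (\ref{uxL}) on the set where the solution is classical.

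\textbf{Step 2 (the constraints).} I set $z(x)\doteq z^{\bar u}(x)$, the contact point in (\ref{taf1}) and (\ref{bfyx}). Since no shock reaches $t=L$, this contact point is unique, and the backward characteristic through $(L,y(x))$ starts at $z(x)$ and carries the value $\bar u(z(x))$. Together with the characteristic speed $f'$, this gives the first line of (\ref{e:constraints-smooth}). Because the contact point is unique, $z$ is continuous, and characteristics do not cross, $z$ is increasing. For the second line, I differentiate the mass identity (\ref{massid}). The derivative is justified because $\bar u$ is Lipschitz and $u(L,\cdot)$ is constant along characteristics, and this reproduces (\ref{massi2}) with $y'=1+\ell'$. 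Reading $v=\bar u(x)$, the two lines say exactly that $g(x,\bar u(x))=(z(x),\bar u(z(x)))$, with $(x,\bar u(x))\in\Omega$ since $\bar u<M$.

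\textbf{Step 3 (manifolds).} At the endpoints we have $z(a)=a$ and $z(b)=b$, while $z(x)<x$ on $]a,b[$. This holds because $\bar u>0$ there: every driver in $]a,b[$ meets traffic, so $y(x)>x+\ell_0$. The iterates $z^n(x)$ therefore decrease to a fixed point, and the only possibility is $a$. Moreover $\bar u(z^n(x))\to0$, so $g^n(x,\bar u(x))\to(a,0)$. Since $\bar u(x)>0$, the orbit eventually enters $W^s_\delta(a,0)\cap\{v>0\}$, which shows that the graph is contained in $W^s_+(a,0)$. Symmetrically, $g^{-n}$ drives the graph point to $(b,0)$, and the graph is contained in $W^u_+(b,0)$.

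For the reverse inclusions, I will use that the local stable manifold of $(a,0)$ is a $\C^1$ curve transversal to $W^u_\delta=\{v=0\}$, so $W^s_\delta(a,0)\cap\{v>0\}$ is a single branch. Near $a$, the graph of $\bar u$ is an invariant curve in $\{v>0\}$ ending at $(a,0)$, and it is the graph of a Lipschitz function with $\bar u_x(a)>0$ by (\ref{bux}). An invariant curve approaching a hyperbolic saddle off the unstable direction must coincide with that branch, so the branch equals the graph near $a$. Pushing forward by $g^{-n}$ and using the invariance of the graph then gives the equality. The same argument applies at $b$.

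I expect the main obstacle to be Step 1, that is, turning the one-sided estimate into a two-sided one using only continuity of $\bar u$. A secondary difficulty is the uniqueness of the stable branch in Step 3.
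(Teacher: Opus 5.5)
\textbf{Gap in Step 1.} The downward Lipschitz bound is left as an uncommitted plan, and the main route you describe cannot close. You propose to transfer the decrease of $\bar u$ to $u(L,\cdot)$ via $\bar u(x)=y'(x)u(L,y(x))$, and then bound the decrease of $u(L,\cdot)$ by redoing the computation of Theorem~\ref{t:81}. But there is no a priori lower bound on the slope of $u(L,\cdot)$. Oleinik's estimate at $t=L$ only controls increases. A compression wave that is about to break at $t=L$ makes $u_x(L,\cdot)$ arbitrarily negative, since the denominator in (\ref{uxL}) tends to $0$. A bound on how fast $u(L,\cdot)$ decreases is available only after you know $\bar u$ is Lipschitz, so the argument is circular. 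Moreover, ``the same constant'' is not what the correct argument produces.

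\textbf{The missing idea.} You already invoke it in Step 2 to get monotonicity of $z$: non-crossing of forward characteristics. Since $u(L,\cdot)$ is continuous, no shock is present at $t=L$. Shocks of a convex scalar law persist forward in time, so none forms in $]0,L]$. Hence $x\mapsto x+Lf'(\bar u(x))$ is nondecreasing, and with $f''\ge\omega_0$ this gives directly
\[
\bar u(x_1)-\bar u(x_2)~\le~\frac{x_2-x_1}{\omega_0 L}\qquad\hbox{for}~~x_1<x_2 .
\]
This is exactly Oleinik's estimate for the time-reversed problem, which is legitimate because the solution is continuous. So your instinct to reverse time is right, but the estimate lands at $t=0$, on $\bar u$, not at $t=L$. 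Equivalently, in your fallback via (\ref{uxL}), the useful information is not the formula itself but the positivity of its denominator, $1+Lf''(\bar u(z))\,\bar u_x(z)>0$, which forces $\bar u_x>-1/(\omega_0L)$.

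\textbf{Comparing constants.} You still need $1/(\omega_0L)\le \|y'\|^2_{\L^\infty}/(\omega_0L)$. This holds because $\|y'\|_{\L^\infty}\ge y'(a)>1$. Together with (\ref{uLip}), it yields (\ref{e:Lipu}).

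\textbf{Steps 2 and 3.} These are sound, and more detailed than the paper's sketch. This includes the uniqueness and continuity of the contact point, $z(x)<x$ on $]a,b[$, and the identification of the graph with the single branch of $W^s_\delta(a,0)\cap\{v>0\}$. The appeal to (\ref{bux}) there is unnecessary, and it was only derived formally. Every graph point has forward orbit converging to $(a,0)$, which already places it in the stable set.
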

{\bf Proof.}
{\bf 1.} Differentiating the identity
\[
\int_{-\infty}^x\bar u(s)ds ~=~ \int_{-\infty}^{y(x)}u(L,s)ds
\]
we obtain 
\bel{e:baruuL}
\bar u(x) ~= ~y'(x) u\bigl(L,y(x)\bigr).
\eeq Since the map $x\mapsto y(x)$ is bi-Lipschitz on $[a,b]$ and $y'(x)>0$, this implies that $u(L,\cdot)$ is continuous. 
We already proved that $\bar u$ satisfies \eqref{uLip}.
It remains to prove that $\bar u$ also satisfies a downward Lipschitz estimate.
Since $u$ is a continuous solution of  \eqref{claw1} on $\,]0,L[\, \times \R$, forward characteristics do not cross and hence
\[
f'\bigl(\bar u(x_2)\bigr)-f'\bigl(\bar u(x_1)\bigr)~ \ge\, - \frac1L(x_2-x_1) \qquad \text{for a.e. }~~x_1<x_2\,.
\]
In turn, this implies
\[
\bar u(x_1)-\bar u(x_2)~\leq~ {x_2-x_1\over \omega_0 L} \qquad \text{for a.e. }~x_1<x_2\,,
\]
where $\omega_0$ is the constant in (\ref{fprop}).
\v
{\bf 2.} 
Since $u$ is constant along backward characteristics we also deduce that $\bar u \bigl(z(x)\bigr)= u\bigl(L,y(x)\bigr)$ with $z(x)= y(x)- Lf'\bigl(u(L,y(x))\bigr)$. 
Combined with \eqref{e:baruuL}, this proves \eqref{e:constraints-smooth}.
\v
{\bf 3.}
By the definition of $g$ at (\ref{e:defg}),   for every $x\in \,]a,b[\,$ 
one has $g\bigl(x,\bar u (x)\bigr)=\bigl(z(x),\bar u(z(x))\bigr)$.
In order to deduce \eqref{e:stable-unstable} it is sufficient to observe $\bar u (a)=\bar u(b)=0$ and that $z:[a,b]\mapsto [a,b]$ is invertible and satisfies $z(x)<x$ for every $x \in \,]a,b[\,$.
\endproof

We are now ready for

{\bf Proof of Theorem \ref{t:82}.}

{\bf 1.} 
In order to prove that $\mathcal J(c)$ is open it suffices to observe that $\bar u$ depends continuously on the cost functions $\varphi,\psi$. Indeed if $\bar u_n$ are continuous Nash equilibria for the costs functions $\varphi_n,\psi_n$ and $\varphi_n \to \varphi$ and $\psi_n\to \psi$ in $\C^2$, then the  $u_n$ are uniformly Lipschitz thanks to \eqref{e:Lipu} and therefore $\bar u = \lim_{n\to \infty}\bar u_n$ is Lipschitz as well. 
\v
{\bf 2.} Next, let 
$\varphi, \psi$ be departure and arrival costs as in {\bf (A2)},  such that the corresponding Nash equilibrium $\bar u$ is continuous. We will show that, for any $\ve>0$, there exists a perturbed arrival cost  $\psi_\ve$, with $\|\psi_\ve-\psi\|_{\C^2}<\ve$, for which the corresponding Nash equilibrium $\bar u_\ve$ is not continuous. 

By Lemma~\ref{l:contNash}, $\bar u$ is Lipschitz continuous.
Let $\bar x$ be a point where $\bar u$ is differentiable and let $\delta_\psi>0$ be small enough so that
\[
 y\bigl(z(\bar x)\bigr)<y(\bar x) - \delta_\psi<y(\bar x)+\delta_\psi <y\bigl(z^{-1}(\bar x)\bigr).
\]  
For $\ve>0$, set
\[
\psi_\ve = \psi + \ve \tilde \psi,
\]
for some smooth perturbation with compact support
$$ \tilde \psi \in \C^\infty(y(\bar x)-\delta_\psi, y(\bar x) + \delta_\psi),$$ to be chosen later.

Assume by contradiction that, for every $\ve>0$, the corresponding Nash equilibrium $\bar u_\ve$ is continuous. Then we would have
\bel{assu}
\bar u_\ve(x) = \bar u(x) \qquad \text{for every }~~x \in \bigl[a,z(\bar x)\bigr] \cup [\bar x, b]
\eeq
because the corresponding stable and unstable manifolds do not depend on $\ve$ if $x\in  [a,z(\bar x)] \cup [\bar x, b]$, namely
\[
\begin{split}
W^{s,\ve}_+(a,0)\cap \bigl\{ (x,v) \in (a, z(\bar x)]\times \R \bigr\} = &~ W^s_+(a,0)\cap 
\bigl\{ (x,v) \in (a, z(\bar x)]\times \R \bigr\}, \\
W^{u,\ve}_+(b,0)\cap \bigl\{ (x,v) \in [\bar x,b)\times \R \bigr\} = &~ W^u_+(b,0)\cap 
\bigl\{ (x,v) \in [\bar x,b)\times \R \bigr\}.
\end{split}
\]
\v

{\bf 3.} Consider the map $x\mapsto y_\ve(x)$ implicitly defined by
$$\vp(x) +\psi_\ve\bigl(y_\ve(x)\bigr)~=~c,$$
where $c$ is the common cost to all drivers.  Equivalently,
$y_\ve(x)=\psi_\ve^{-1}(c-\varphi(x))$.    A Taylor expansion yields
\[
\begin{split}
y_\ve(x)= &~ y(x) - \ve \frac{\tilde \psi(y(x))}{\psi'(y(x))}+ \O(\ve^2) \\
\doteq &~ y(x) + \ve \Delta y(x) + \O(\ve^2) \\
y_\ve'(x)= &~ y'(x) +\ve \left[ -\frac{\tilde \psi'(y(x))y'(x)}{\psi'(y(x))} + \frac{\psi''(y(x))y'(x) \tilde \psi(y(x))}{(\psi'(y(x)))^2}\right] + \O(\ve^2)\\
\doteq &~ y'(x)+ \ve \Delta y'(x) + \O(\ve^2).
\end{split}
\]
Therefore, for any $v$ as in (\ref{Omdef}), we have
\[
\frac{v}{y'_\ve(x)}= \frac{v}{y'(x)}-\ve \frac{v}{y'(x)^2}\Delta y'(x) + \O(\ve^2).
\]
In addition, recalling (\ref{e:defg}), the corresponding map $g_\ve$ satisfies
\[
g_\ve(x,v)= g(x,v) + \ve \left( \Delta y(x) +Lf''\left(\frac{v}{y'(x)}\right)\frac{v}{y'(x)^2}\Delta y'(x)\,,~ -\frac{v}{y'(x)^2}\Delta y'(x) \right) + \O(\ve^2).
\]
\v
{\bf 4.} 
Since $\bar x \in \,]a,b[\,$, we have $\bar u(\bar x)>0$. Therefore, choosing $\Tilde \psi \in C^\infty_c(y(\bar x)-\delta_\psi, y(\bar x)+\delta_\psi)$ such that 
\[
\Tilde \psi\bigl(y(\bar x)\bigr)\,=-Lf''\left(\frac{\bar u (\bar x)}{y'(\bar x)}\right) \psi'\bigl(y(\bar x)\bigr), \qquad \Tilde \psi'\bigl(y(\bar x)\bigr)\,=\, \frac{y'(\bar x)}{\bar u(\bar x))}\psi'\bigl(y(\bar x)\bigr) - Lf''\left(\frac{\bar u (\bar x)}{y'(\bar x)}\right) \psi''\bigl(y(\bar x)\bigr),
\]
we obtain 
\[
\left\{\bega{cl}
 \ds\Delta y(\bar x) +Lf''\left(\frac{\bar u(x)}{y'(\bar x)}\right)\frac{\bar u(\bar x)}{y'(\bar x)^2}\,\Delta y'(\bar x) 
 &=~ 0, \\[4mm]
 \ds-\frac{\bar u(\bar x)}{y'(\bar x)^2}\Delta y'(\bar x)&=~1.\enda\right.
\]
In turn, this implies
\bel{e:ge-g}
g_\ve\bigl(\bar x, \bar u(\bar x)\bigr)~=~ g\bigl(\bar x,\bar u(\bar x)\bigr) 
+ (0,\ve) + \O(\ve^2).
\eeq
By construction 
we now have  $g\bigl(\bar x, \bar u(\bar x)\bigr) = \bigl(z(\bar x), \bar u(z(\bar x))\bigr)$.
Moreover, by the assumption (\ref{assu}), $\bar u(\bar x)=\bar u_\ve(\bar x)$.   
This implies $g_\ve\bigl(\bar x, \bar u(\bar x))\bigr)=\bigl(z_\ve(\bar x), \bar u_\ve(z_\ve(\bar x))\bigr)$.
Thanks to \eqref{e:ge-g}, we have 
\bel{zep}z_\ve(\bar x)- z(\bar x)\,=\, \O(\ve^2),\qquad\qquad \bar u_\ve \bigl(z_\ve(\bar x)\bigr) - \bar u\bigl(z(\bar x)\bigr) = \ve + \O(\ve^2).\eeq
Since $\bar u (z(\bar x))= \bar u_\ve(z(\bar x))$ we deduce that $\bar u_\ve (z_\ve(\bar x)) - \bar u_\ve(z(\bar x)) = \ve + \O(\ve^2)$.
This contradicts the first bound in (\ref{zep}), because by (\ref{e:Lipu}) the functions $\bar u_\ve$ are uniformly Lipschitz continuous as $\ve\to 0$. This contradiction achieves the proof.
\endproof 

\section{An evolution equation for the departure pattern}
\label{sec:9}
\setcounter{equation}{0}
%

Given two functions $\vp,\psi$,
the total cost to a driver starting at time $x$ is defined as
\bel{Pudef}
\Phi^u(x)~\doteq~\vp(x)+\psi(y^u(x))\,.\eeq
Notice that here the arrival time 
$y^u(x)$ depends not only on the departure time 
$x$ but also on the entire traffic pattern $u(\cdot)$.   In particular, by (\ref{atime}) it depends on the 
behavior of drivers that have departed at times  earlier than $x$.
%

Following one of the models proposed in \cite{BLSY}
we assume that   drivers initially departing  at time $x$ switch to other times 
$\xi\in\R$, with probability proportional
to the difference in cost.  Calling $u(\theta,x)= Q_x(\theta,x)$ the rate of departures at time $x$ on day $\theta$,
in continuum time this leads to 
\bel{mod2}
{d\over d\theta} u(\theta ,x) ~=~\Lambda(u,x)~\doteq~\int u(\xi)\Big[ \Phi^u(\xi) - \Phi^u(x)\Big]_+ d\xi
-\int u(x)\Big[ \Phi^u(x)-\Phi^u(\xi)\Big]_+ d\xi\,.\eeq
Our first goal is to prove that the evolution equation (\ref{mod2})  is well posed.
Namely, 
for any  initial departure pattern 
\bel{u0} u(0,x)~=~\bar u(x)~\geq~0,\eeq
a unique solution exists, defined
for all $t\geq 0$.
Our main result is
\begin{theorem}\label{t:91}
Let the assumptions {\bf (A1)--(A2)} hold. Assume that the initial departure rate
$\bar u\in \L^1(\R)\cap\L^\infty(\R)$  has  bounded support.   Then the evolution problem (\ref{mod2}) 
has a unique solution $u=u(\theta, x)$, defined for all $\theta\geq 0$.     Its support remains uniformly bounded in time.
\end{theorem}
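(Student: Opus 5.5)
We plan to treat (\ref{mod2}) as an ODE $\frac{d}{d\theta}u=\Lambda(u)$ in the Banach space $\L^1(\R)$, restricted to a closed invariant set. The key input is Theorem~\ref{t:41}: the map $u\mapsto y^u$ is Lipschitz from $\L^1$ to $\L^1$ on bounded sets. First we note that (\ref{mod2}) conserves mass: integrating in $x$, the gain and loss terms cancel by symmetry. Hence along any nonnegative solution $\|u(\theta)\|_{\L^1}=\|\bar u\|_{\L^1}\doteq\kappa$.

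We then fix the domain
\[
\mathcal D~\doteq~\bigl\{u\in\L^1(\R)\,;~u\geq 0,~\|u\|_{\L^1}=\kappa,~\hbox{supp}\,u\subseteq[-R,R]\bigr\},
\]
where $R$ is chosen from the bounds (\ref{Phib}) and the coercivity in {\bf (A2)}. Take $R$ so large that $\hbox{supp}\,\bar u\subseteq[-R,R]$ and $\Phi^0(x)>\max_{|\xi|\leq R_0}\Phi^{max}(\xi)$ for $|x|>R$, with $R_0<R$ a suitable intermediate radius. The invariance we want is the following. The gain term at $x$ is $\int u(\xi)[\Phi^u(\xi)-\Phi^u(x)]_+d\xi$, and it vanishes whenever $\Phi^u(x)\geq\sup_{\rm supp\,u}\Phi^u$. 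So no driver moves to times costing more than every currently occupied time, and since $\Phi^u$ is sandwiched between the convex coercive functions $\Phi^0,\Phi^{max}$, mass can only appear on the sublevel set $\{\Phi^0\leq \max_{[-R,R]}\Phi^{max}\}$. This set is bounded independently of $u$. To obtain true invariance we define $R$ through the sublevel set itself, i.e.\ as the smallest interval containing $\hbox{supp}\,\bar u$ and $\{\Phi^0\leq \max_{[-R',R']}\Phi^{max}\}$ for some fixed $R'$ tied to $\bar u$. This self-referential choice has to be arranged carefully, and we expect it to require a bootstrap in which the support is tracked via the running maximum of $\Phi^u$ over the support, a quantity that is nonincreasing in time. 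This is where the uniform support bound comes from.

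Next we verify Lipschitz continuity of $\Lambda$ on $\mathcal D$ in $\L^1$. By (\ref{Pudef}) and $|\psi'|\leq L_\psi$,
\[
\|\Phi^u-\Phi^v\|_{\L^1([-R,R])}~\leq~L_\psi\|y^u-y^v\|_{\L^1}~\leq~L_\psi L_K\|u-v\|_{\L^1},
\]
and $\Phi^u$ is uniformly bounded on $[-R,R]$ by (\ref{Phib}). Writing $\Lambda(u)-\Lambda(v)$ term by term and using $|[a]_+-[b]_+|\leq|a-b|$, the gain term splits into $\int|u-v|\,|\Phi|$ plus $\int v(\xi)\bigl(|\Phi^u-\Phi^v|(\xi)+|\Phi^u-\Phi^v|(x)\bigr)$. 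The part $\int v(\xi)|\Phi^u-\Phi^v|(\xi)\,d\xi$ needs $v\in\L^\infty$. This is the main obstacle. Theorem~\ref{t:41} gives only $\L^1$ control of $y^u-y^v$, so the nonlocal averages require an $\L^\infty$ bound on $u$. We will therefore work in $\mathcal D\cap\{\|u\|_{\L^\infty}\leq N(\theta)\}$. The loss term is $u(x)$ times a bounded rate, and the gain term is bounded by $\kappa\cdot 2\sup|\Phi|$, so
\[
\frac{d}{d\theta}u~\leq~C\kappa,
\]
which gives the linear a priori bound $\|u(\theta)\|_{\L^\infty}\leq\|\bar u\|_{\L^\infty}+C\kappa\theta$. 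On this set $\Lambda$ is Lipschitz with constant $C(1+N)$.

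Finally, we obtain local existence and uniqueness by Picard iteration, or equivalently the Cauchy–Lipschitz theorem on the closed set. Positivity is preserved by writing the equation as $u_\theta=G(u)-u\,r(u)$ with $G\geq0$ and $r$ bounded, so that $u(\theta)=e^{-\int r}\bar u+\int e^{-\int r}G\geq0$. Mass conservation and the support and $\L^\infty$ bounds hold as above. Since these bounds do not blow up in finite time, the local solution extends to all $\theta\geq0$, and Gronwall's inequality yields uniqueness and continuous dependence in $\L^1$.
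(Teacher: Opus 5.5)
\textbf{There is a genuine gap in the uniform support bound.} Your functional-analytic steps match the paper's steps 1--3 and 9: mass conservation, positivity via the variation-of-constants formula, the $\L^\infty$ bound, and the Lipschitz estimate for $\Lambda$ through Theorem~\ref{t:41} together with $\int v(\xi)|\Phi^u-\Phi^v|(\xi)\,d\xi\le\|v\|_{\L^\infty}\|\Phi^u-\Phi^v\|_{\L^1}$. But all of them presuppose a support bound that is uniform in time, and the proposal does not establish it.

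Your domain $\mathcal D$ (functions supported in a fixed $[-R,R]$) is not positively invariant for any $R$, and your own estimate shows why. New mass can appear anywhere on $\{\Phi^0\le\max_{[-R,R]}\Phi^{max}\}$. This set strictly contains $[-R,R]$, because $\Phi^{max}(\pm R)>\Phi^0(\pm R)$ and $\Phi^0$ is monotone near $\pm R$. Iterating enlarges the interval at every step, with no uniform bound. The paper's example shows the failure is real: mass $\kappa/\ve$ on $[a,a+\ve]$ creates a queue, and drivers gain by departing before $a$.

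Your proposed rescue is to claim that $m(\theta)=\sup_{\mathrm{supp}\,u(\theta)}\Phi^{u(\theta)}$ is nonincreasing. This is unproved, and nothing in the dynamics makes it true. Note first that the support never shrinks, since $u(\theta,x)\ge e^{-\int r}\bar u(x)$. Next, $\Phi^u(x_2)$ depends on all drivers departing before $x_2$. If mass is transferred from expensive early slots into a cheap slot $x_1<x_2$, then $\ov U(x_2)-\ov U(z)$ grows for contact points $z<x_1$. Hence $y^u(x_2)$ and $\Phi^u(x_2)$ increase, and can exceed the previous maximum. For example, the maximum may sit at the left edge of the support, whose cost stays $\Phi^0$ and which keeps positive mass. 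So the sublevel-set bootstrap does not close.

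\textbf{The missing idea is control of the density near the edges of the support.} The paper picks a large interval $[a,b]$ with $\Phi^0(a)=\Phi^0(b)=K$. It then uses the set $\F$ in (\ref{posinv}) of rates with mass $\kappa$, bounded by linear ramps on $[a,a'']$ and $[b'',b]$. For $u\in\F$ it compares with the explicit rate $\bar u^\sharp$, which bounds the extra delay of a driver departing at $x$ by $\O(x-a)$ near $a$ and by $\O(b-x)$ near $b$. From this it derives $\Phi^u\le K$ on all of $[a,b]$, while $\Phi^u\ge\Phi^0\ge K$ outside. Therefore the gain term vanishes outside $[a,b]$. Finally, on the ramp boundaries the loss term dominates the gain once $K$ is large, so the ramp constraints are preserved. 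This positively invariant set (which contains $\bar u$ once $\mathrm{supp}\,\bar u\subset\,]a'',b''[$) is what yields the uniform support bound; a monotonicity of cost levels does not.
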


{\bf Proof.} {\bf 1.} We write (\ref{mod2}) as an abstract  evolution equation in the Banach space $\L^1(\R)$,
\bel{m2}
{d\over d\theta} u(\theta)~=~\Lambda\bigl(u(\theta)\bigr),\qquad\qquad u(0)=\bar u.\eeq
By the standard theory of ODEs in Banach spaces,  this Cauchy problem has unique local solution provided
that the map $u\mapsto \Lambda(u)$ is locally Lipschitz continuous w.r.t.~the $\L^1$ norm.
%
\v
{\bf 2.}  We observe that an a priori bound on the support of $u(\theta,\cdot)$ yields a bound on 
$\bigl\|u(t,\cdot)\bigr\|_{\L^\infty}$. Indeed,
assuming that the solution $u(\theta,\cdot)$ is supported inside an interval of length $R$,
by the assumptions {\bf (A2)} 
we have 
\bel{linf}\bega{rl} \ds
{d\over d\theta} \bigl\|u(\theta,\cdot)\bigr\|_{\L^\infty} &\leq~\ds \sup_x 
\int \Big[ \Phi^u(\xi) - \Phi^u(x)\Big]_+ u(\theta, \xi) \,d\xi\\[4mm]
&\ds \leq~\sup_{|x-\xi|< R} \bigl| \Phi^u(\xi) - \Phi^u(x)\bigr|\cdot    \bigl\|u(\theta,\cdot)\bigr\|_{\L^1}\\[4mm]
&\ds\leq~
C_0(1+R)   \bigl\|u(\theta,\cdot)\bigr\|_{\L^1}\,,\enda\eeq
for some constant $C_0$ depending on the Lipschitz constants $L_\vp,L_\psi$ in (\ref{costp1}) and on the maximum travel time
$\ell_{max}$  in (\ref{ellbd}).  
\v
{\bf 3.} For any $u,v\geq 0$ with $\|u\|_{\L^1}\leq K$, $\|v\|_{\L^1}\leq K$, 
by Proposition ~\ref{p:LagrHolder} we have
\bel{dLuv}\bega{l}\ds
\bigl\| \Lambda(u) - \Lambda(v)\bigr\|_{\L^1}\ds~=~\int_a^b\big| \Lambda(u,x)-\Lambda(v,x)\bigr|\, dx\\[4mm]
\ds\quad \leq ~\dint  \bigl| u(\xi)- v(\xi)\bigr| \Big[ \Phi^u(\xi) - \Phi^u(x)\Big]_+ d\xi \,dx \\[4mm]
\qquad \ds +\dint v(\xi)\bigg| \bigl[\Phi^u(x)-\Phi^u(\xi)\bigr]_+ - \bigl[\Phi^v(x)-\Phi^v(\xi)\bigr]_+ \bigg| d\xi\, dx\\[4mm]
\ds \qquad +\dint \bigl|u(x)-v(x)\bigr|\, \Big[ \Phi^u(x)-\Phi^u(\xi)\Big]_+ d\xi\, dx \\[4mm]
\ds\qquad + \dint v(x) \, \bigg| \Big[ \Phi^u(x)-\Phi^u(\xi)\Big]_+- \Big[ \Phi^v(x)-\Phi^v(\xi)\Big]_+\bigg|\, d\xi\, dx
\\[4mm]
\quad \leq~(b-a) \cdot \| u-v\|_{\L^1}\cdot \|\Phi^u\|_{\L^\infty}  + (b-a) \|v\|_{\L^\infty} \cdot 2 \|\Phi^u-\Phi^v\|_{\L^1}
\\[2mm]
\qquad +(b-a) \cdot \| u-v\|_{\L^1}\cdot \|\Phi^u\|_{\L^\infty}  + (b-a) \|v\|_{\L^\infty} \cdot 2 \|\Phi^u-\Phi^v\|_{\L^1}\,.
\enda
 \eeq
 We used here the elementary inequality 
$$ \bigg| \bigl[\Phi^u(x)-\Phi^u(\xi)\bigr]_+ - \bigl[\Phi^v(x)-\Phi^v(\xi)\bigr]_+ \bigg|~\leq~
\bigl| \Phi^u(x) - \Phi^v(x)\bigr| + \bigl| \Phi^u(\xi) - \Phi^v(\xi)\bigr| .$$

\begin{figure}[htbp]
   \centering
\includegraphics[width=1.0\textwidth]{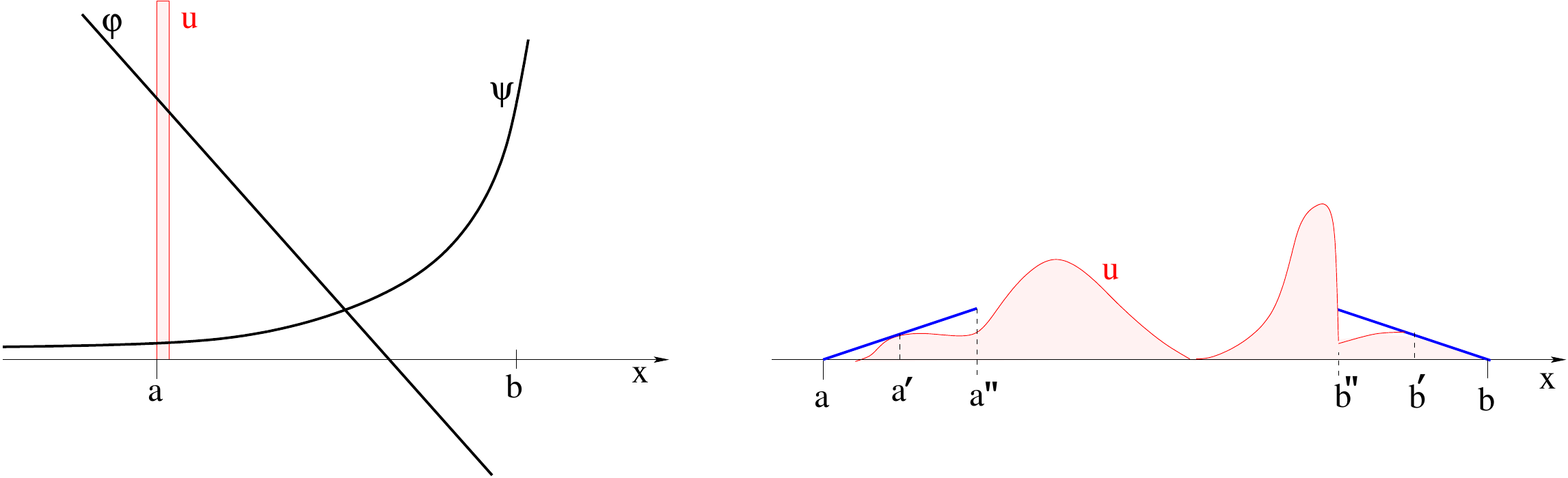}
   \caption{\small Left: an initial data supported on $[a,b]$, but such that for $\theta>0$ the support contains points $x<a$.    Right: a function $u$ in the positively invariant set $\F$ at (\ref{posinv}), with support contained in $[a,b]$.}
   \label{f:tf259}
\end{figure}
\v
{\bf 4.} In the next few steps we provide an a priori bound on the support of $u(\theta,\cdot)$.
The fact that the support remains uniformly bounded in time is not trivial.   Indeed, 
no matter how large we take the interval $[a,b]$, the set of all departure distributions supported on $[a,b]$
is NOT positively invariant for the evolution (\ref{mod2}). 

{\bf Example.} 
As shown in Fig.\ref{f:tf259}, left,  let
$$u(x)~=~\left\{\bega{cl} \kappa/\ve\quad &\hbox{if} ~~x\in [a, a+\ve],\\[2mm]
0\qquad &\hbox{otherwise.}\enda\right.$$
Then all drivers departing at time $x\in [a+\ve/2, a+\ve]$ 
arrive after they have cleared the queue at the entrance of the highway, i.e.~at time
$$y(x) ~>~a+ (x-a) {\kappa/\ve\over M}+\ell_0~\geq~ a + {\kappa\over 2M}+\ell_0\,.$$
For $\ve>0$ sufficiently small, since $\psi'(a)>0$ we obtain
$$ \vp(x) + \psi(y(x))~> ~\vp(x) + \psi\Big( a + {\kappa\over 2M}+\ell_0\Big)~>~\vp(a) + \psi(a+\ell_0)$$
for all drivers departing at time $x\in [a+\ve/2, a+\ve]$.   As a consequence, all these drivers will have an advantage in departing before time $a$.

To establish an a priori bound on the support, we
denote by $\ell_0, \ell_{max}$ a lower and an upper bound on the duration of  
a trip, as in (\ref{ell0}), (\ref{ellmax}).  These constants depend only on the flux function $f$ and on the total number $\kappa$ of drivers.
For any departure rate $ \bar u$ with $\|\bar u\|_{\L^1}=\kappa$, the corresponding cost $\Phi^{\bar u}(x)$ to a driver 
departing at time $x$ satisfies the bounds
\bel{Phib2} \Phi^0(x)~\doteq ~\vp(x) + \psi(x+\ell_0) ~\leq~\Phi^{\bar u}(x)~\leq ~\vp(x) + \psi(x+\ell_{max}) 
~\doteq~\Phi^{max}(x).\eeq
By the assumptions {\bf (A2)} on $\vp,\psi$, the map $x\mapsto \Phi^0(x)$ is convex and satisfies
$$\lim_{x\to \pm \infty} \Phi^0(x)\,=\,+\infty.$$
In particular, for any  sufficiently large time $b$ there exists a unique 
time $a<b$ such that 
$$\Phi^0(a)~=~\Phi^0(b).$$

Consider the upper bound on the second derivative
$$\omega_2~\doteq~\sup_{u\in [0,M/2]} f''(u),$$
and denote by  $L_{f/u}$ the Lipschitz constant of the function $f(u)/u$, restricted to the interval $u\in [0, M/4]$.
Choose $\delta>0$ so small that
\bel{delp}\kappa\,\leq \,{3M\over 16 \delta} \,,\qquad 
\delta\,<\,{1\over 2\omega_2 L}\,,\qquad\qquad   {1\over 2\delta} \,>\,\ell_{max}\,, \eeq
and, in addition,
\bel{delp2} \delta < \frac{3M}{16L(f(M)-f(M/4))},  \qquad \frac{L_\Phi}{8\delta} \geq L_\psi (\ell_{max}-\ell_0), \eeq
\bel{delp3}  \delta < \frac{2}{L\omega_2M},  \qquad \delta < \frac{L_\Phi}{L_\psi L L_{f/u}M}.\eeq
We introduce the additional points
$$ a<a'<a''<b''<b'<b,$$
determined by the identities
\bel{idab} b''= b-\delta^{-1},\qquad \Phi^0(a'') = \Phi^0(b''), \qquad a' = {a+a''
\over 2},\qquad b'={b+b''\over 2}\,.\eeq
We claim that, if the interval $[a,b]$ is chosen large enough, then  the family of all departure rates
\bel{posinv}\bega{rl} \F&\ds\doteq~\bigg\{ u:[a,b]\mapsto\R_+\,;~~\int_a^b u(x)\, dx=\kappa,
\\[2mm]
&\ds\qquad u(x)\leq {M(x-a)\over 4(a''-a)}  ~~\hbox{if} ~~x\in [a, a''],\qquad u(x)\leq  
{M(b-x)\over 4(b-b'')}~~\hbox{if~~} x\in [b'',b]\bigg\}
\enda\eeq
 (see Fig.~\ref{f:tf259}, right)
is positively invariant. 
The positive invariance of the set $\F$ will be proved by showing that, for any  departure rate $u\in\F$, one has:
\begi
\item[(i)] If $x\in \,]-\infty, a]\cup [b, +\infty[\,$, then $\Lambda^u(x)=0$.
 \item[(ii)] If $x\in [b', b[$ and $u(x)=  {M(b-x)\over 4(b-b'')}$, then $\Lambda^u(x)< 0$.
\item[(iii)] If $x\in \,]a, a']$ and $u(x)=  {M(x-a)\over 4(a''-a)}$, then $\Lambda^u(x)<0$.
 \item[(iv)] If $x\in [b'', b'[$ and $u(x)=  {M(b-x)\over 4(b-b'')}$, then $\Lambda^u(x)< 0$.
 \item[(v)] If $x\in \,]a', a'']$ and $u(x)=  {M(x-a)\over 4(a''-a)}$, then $\Lambda^u(x)< 0$.
 \endi
 \v
 {\bf 5.}
 As a preliminary, in this step we analyze the solution to (\ref{claw1}) 
with  an initial data $\bar u\in \F$. We seek an upper  bound on the cost $\Phi^{\bar u}$, 
sharper than the one in (\ref{Phib2}).

\begin{figure}[htbp]
   \centering
\includegraphics[width=0.7\textwidth]{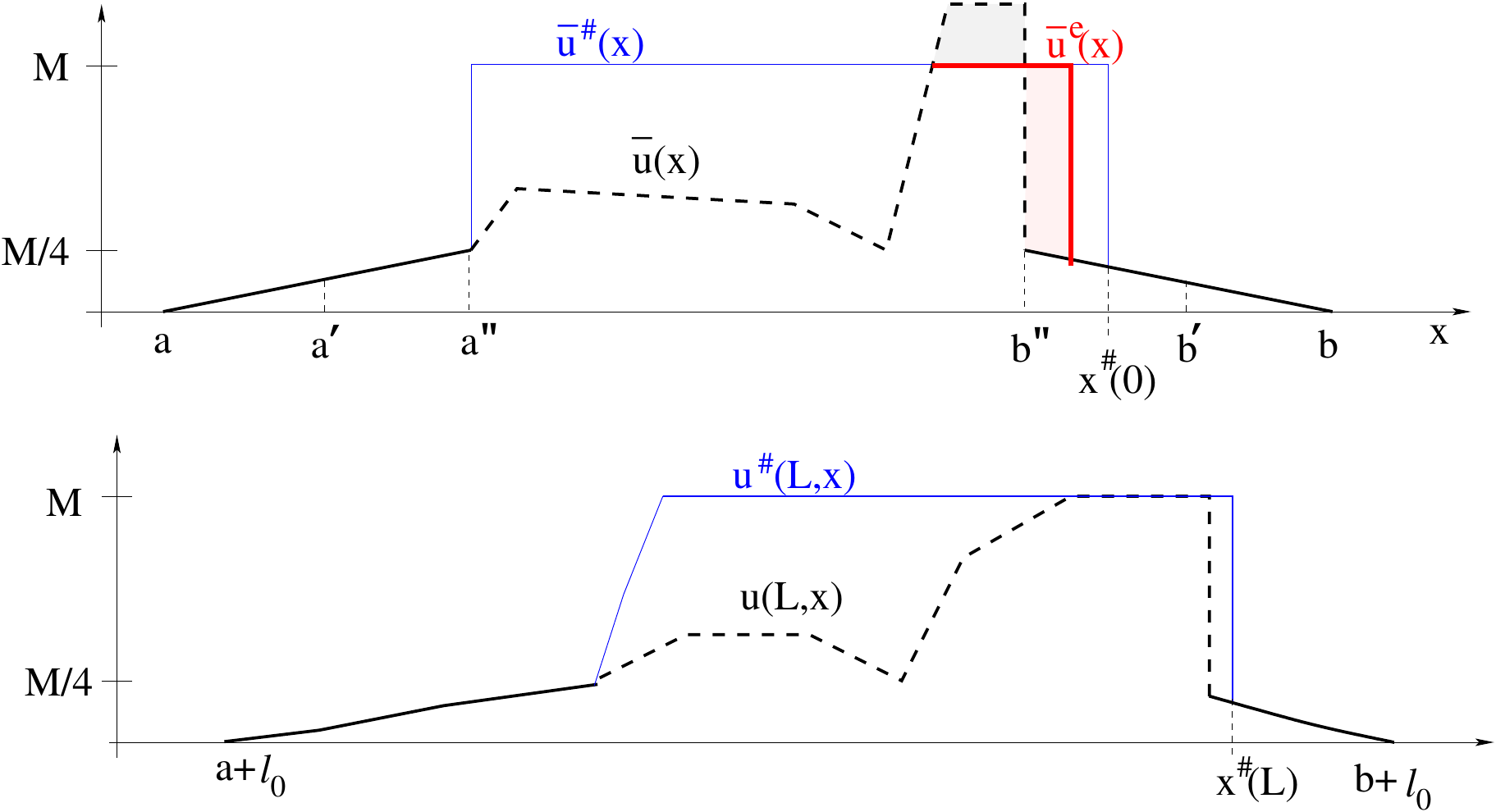}
   \caption{\small Top: a departure rate $\bar u$ as in (\ref{posinv}), and the corresponding effective 
 rate $\bar u^e$ in (\ref{eflux}).   The function $\bar u^\sharp\geq \bar u^e$ provides an upper bound for which
 the solution can be explicitly computed.
   Bottom: the rates of arrivals $u(L,x)\leq u^\sharp(L,x)$ at the end of the highway.}
   \label{f:tf268}
\end{figure}

Consider a departure rate $\bar u\in \F$.   If $\bar u$ takes values larger than the maximum flux $M$, 
we need to consider the effective rate $\bar u^e$, as shown in Fig.~\ref{f:tf268}, top. 
Since the shaded area is $\leq \kappa$, if the first inequality in (\ref{delp}) holds,
this effective rate will satisfy
\bel{erate}
\bar u^e(x)~=~\bar u(x)\qquad \hbox{for} \quad x\in \,]-\infty, a'']~\cup~\left[b-{3\over 4\delta}, ~+\infty\right[\,.\eeq

To estimate the arrival time of various drivers, we use a comparison argument in terms of the departure rate
\bel{bush}\ds\bar u^\sharp(x)~=~\left\{ \bega{cl}\ds{M (x-a)\over 4(a''-a)}  \qquad &\hbox{if} ~~x\in [a, a''],\\[4mm]
\ds {M(b-x)\over 4(b-b'')}\qquad &\hbox{if~~} x\in [x^\sharp(0),b],\\[4mm]
 M \qquad  &\hbox{if~~} x\in \,\bigl]a'',\, x^\sharp(0)\bigr[\,,\\[4mm]
0 \qquad  &\hbox{if~~} x\notin[a,b].\enda\right.\eeq
Here the point $x^\sharp(0)$ is determined by the identity
$$\int_{b''}^{x^\sharp(0)}  \left(M-{M (b-x)\over 4(b-b'')}\right) dx~=~\kappa.$$
The above construction implies $\bar u^\sharp(x)\geq \bar u^e(x)$.

The solution of (\ref{claw1}) with initial data $\bar u^\sharp$  can be constructed by characteristics:
$$u^\sharp\Big(L, \xi+ L f'\bigl(\bar u^\sharp(\xi)\bigr)\Big)\,=\,\bar u^\sharp(\xi)\qquad\qquad \hbox{for $\xi< a''$ and for $\xi>b'$.}$$
We observe that this solution also contains a shock, say located at $x=x^\sharp(t)$.
Since the right state across the shock   is $\leq M/4$, this shock has speed
$$\dot x^\sharp(t)~\leq~{f(M)-f(M/4)\over 3M/4}\,.$$
Choosing $\delta>0$ as in \eqref{delp} we achieve
$$x^\sharp(L) ~<~ b'~=~b-{1\over 2\delta}\,.$$
Moreover (see Fig.~\ref{f:tf268}, bottom)
$$u^\sharp(L, x)~=~0\qquad\qquad \hbox{if}~~x\notin [a+\ell_0, \, b+\ell_0].$$

By the method of characteristics we obtain the inequalities
$$u^\sharp(L,x)~\geq~\left\{ \bega{cl}\lambda (x-a-\ell_0)  \qquad  &x\in [a+\ell_0, a''+\ell_0],\\[1mm]
\mu (b+\ell_0 -x)  \qquad &x\in [b'+\ell_0, b+\ell_0],\enda\right.$$
for suitable constants 
$$0~<~\lambda~\leq~\delta~\leq~\mu.$$

If the departure rate were $\bar u^\sharp$, 
the driver starting at $x\in [a,a']$ would arrive at time $y^\sharp(x)$ implicitly determined by
$$\int_a^x {M(s-a)\over 4(a''-a)}\, ds~=~\int_{a+\ell_0}^{y(x)} u^\sharp(L, s) \, ds~\geq~\int_{a+\ell_0}^{y(x)} \lambda (s-a-\ell_0) \, ds.$$
This yields
$${\lambda\over 2}\bigl(y(x)-a-\ell_0\bigr)^2 ~\leq~ {M\over 8(a''-a)} (x-a)^2,$$
and hence 
\bel{depta} y(x)-a-\ell_0~\leq~\sqrt{M\over 4\lambda(a''-a)} \cdot(x-a)~\doteq~\Hat \lambda (x-a),\eeq
\bel{dPa} \Phi^0(x)~\leq~\Phi^{\bar u}(x)~\leq~\Phi^{\bar u^\sharp}(x)~\leq~\Phi^0(x) + L_\psi \,\Hat\lambda (x-a),
\qquad\quad\forall  x\in [a, a'].\eeq
An entirely similar argument yields
\bel{dPb}  \Phi^0(x)~\leq~ \Phi^{\bar u}(x)~\leq~\Phi^{\bar u^\sharp}(x)~\leq~\Phi^0(x) + L_\psi \,\Hat\mu (b-x),
\qquad\quad\forall  x\in [b', b].\eeq

Using (\ref{dPa})-(\ref{dPb}) we shall prove the claims (i)--(v),  assuming that $b$ is sufficiently large.
\begin{figure}[htbp]
   \centering
\includegraphics[width=0.7\textwidth]{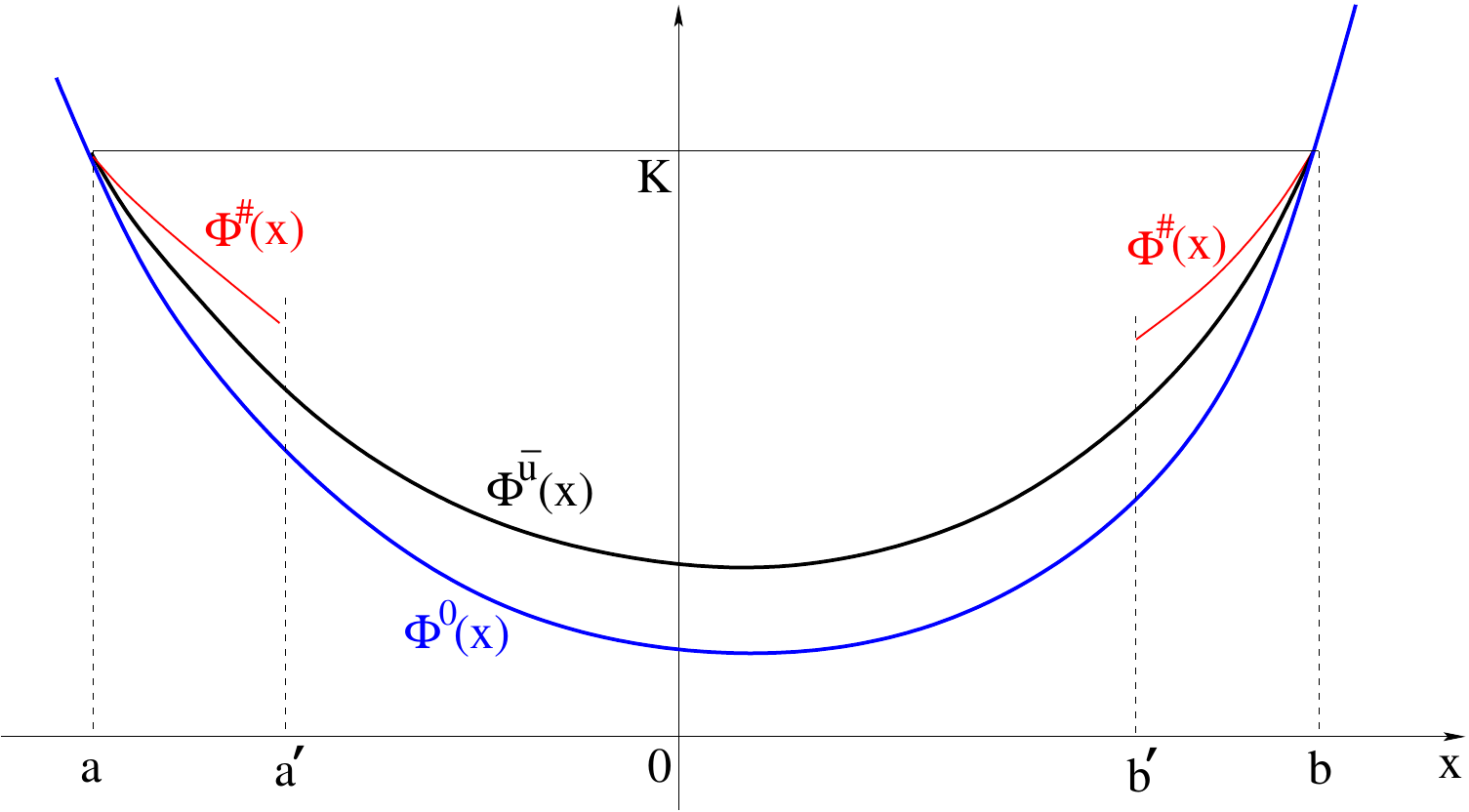}
   \caption{\small The total cost $\Phi^{\bar u}(x)$ to drivers with departure time $x$. 
   Here $\Phi^0$ is a lower bound, corresponding to an empty road. The upper bound $\Phi^\sharp \doteq \Phi^{\bar u^\sharp}$ is obtained by a comparison with the departure rate (\ref{bush}).}
   \label{f:tf267}
\end{figure}
\v
{\bf 6.} To prove (i),  let $\bar u\in \F$ and call
\bel{Mab}K~\doteq~\Phi^0(a)~=~\Phi^0(b).\eeq
Since 
$$\Phi^{\bar u}(x)~\geq~\Phi^0(x)~\geq~K\qquad\qquad\forall x\notin \,]a,b[\,,$$
it suffices to show that 
\bel{Psma}\Phi^{\bar u}(x)~\leq~K\qquad\qquad\forall x\in [a,b].\eeq

We first prove \eqref{Psma} for $x \in [a',b']$.
By \eqref{LP1} and \eqref{LP2}, it is possible to choose $K$ sufficiently large so that 
$$\Phi^0_x (x) \le -{L_\varphi \over 2} \quad \forall x \le a'', \qquad \Phi^0_x (x) \ge {L_\Phi \over 2}\quad \mbox{and} \quad \psi'(x) > \frac{L_\psi}2 \quad \forall x \ge b''.
$$
Therefore
\bel{a''-a}
\frac1\delta = b - b'' \le \frac2{L_\Phi}(\Phi^0(b)-\Phi^0(b'')) = \frac2{L_\Phi}(\Phi^0(a)-\Phi^0(a'')) \le \frac{2L_\varphi}{L_\Phi} (a''-a)
\eeq
In particular for $x \in [a',b']$ it holds 
$$\Phi^{\bar u}(x)~\leq~ \Phi_0(x) +L_\psi(\ell_{max}-\ell_0)~\leq~\max\{ \Phi^{\bar u}(a'),\Phi^{\bar u}(b')\}
+ L_\psi(\ell_{max}- \ell_0).$$ 
We have
$$
\Phi^{\bar u}(a') \le \Phi^0(a') + L_\psi (\ell_{max}-\ell_0) \le \Phi^0(a) - \frac{L_{\varphi}}2(a'-a) + L_\psi(\ell_{max}-\ell_0) \le K - \frac{L_\Phi}{8 \delta} + L_\psi (\ell_{max}-\ell_0) \le K
$$
by \eqref{delp2}, and similarly
$$
\Phi^{\bar u}(b') \le \Phi^0(b') + L_\psi (\ell_{max}-\ell_0) \le K - \frac{L_\Phi}{2}(b-b') +  L_\psi (\ell_{max}-\ell_0) = K - \frac{L_\Phi}{4\delta} + L_\psi  (\ell_{max}-\ell_0) \le K.
$$
We now prove \eqref{Psma} for $x \in [a,a']$. Since $\Phi^{\bar u}(x) \le \Phi^{\bar u^\sharp} (x)$, it is sufficient to prove that $\Phi^{\bar u^\sharp} (x) \le K$.
It holds
$$
\Phi^{\bar u^\sharp}(x) = \varphi (x) + \psi (y^\sharp(x))\le \Phi^0(x) + L_\psi (y^\sharp(x)-x-\ell_0) \le K - \frac{L_{\varphi}}2(x-a) + L_\psi (y^\sharp(x)-x-\ell_0)
$$
We now prove that $ y^\sharp(x)-x-\ell_0 \le \frac{L_\varphi}{2L_\psi}(x-a)$. Notice that $y^\sharp (x)= y^\sharp(L,x)$ where
$$
y^\sharp(t,x)= x + \int_0^t \frac{f(u(s,y^\sharp(s,x)))}{u(s,y^\sharp(s,x))}ds \qquad \mbox{for }t \in [0,L].
$$
By the method of characteristics, for every $s \in [0,L]$ it holds $u(s,y^\sharp(s,x)) \in [0, \bar u(x)]$, therefore 
$$
\begin{array}{rl}
y^\sharp (L,x) = &\ds ~ x + \ell_0 + \int_{0}^L \left(\frac{f(u(s,y^\sharp(s,x)))}{u(s,y^\sharp(s,x))} - f'(0)\right) ds \le  x + \ell_0 + L  L_{f/u} \bar u (x) \\[4mm]
\le &\ds ~ x+\ell_0 + L  L_{f/u} \frac{M}{4(a''-a)}(x-a) \le ~ x+\ell_0 +  \frac{L_\varphi}{2L_\psi}(x-a)
\end{array}
$$
provided $\delta < \frac{L_\Phi}{L_\psi L  L_{f/u} M}$,   thanks to \eqref{a''-a}.

We prove \eqref{Psma} for $x \in [b',b]$.
$$
\Phi^{\bar u^\sharp}(x) = \varphi (x) + \psi (y^\sharp(x))\le \Phi^0(x) + L_\psi (y^\sharp(x)-x-\ell_0) \le K - \frac{L_{\Phi}}2(b-x) + L_\psi (y^\sharp(x)-x-\ell_0)
$$
We now prove that $ y^\sharp(x)-x-\ell_0 \le \frac{L_\Phi}{2L_\psi}(b-x)$. 
By the method of characteristics, for every $s \in [0,L]$ it holds $u(s,y^\sharp(s,x)) \in [\bar u(x), \bar u (\tilde x)]$ for $\tilde x \in [b'',b]$ such that $\bar u^{\sharp}(L,y^\sharp(x))=\bar u^\sharp(\tilde x)$.
In particular $y^\sharp (x) \ge x + \frac{f(\bar u(x))}{\bar u(x)}L$ and therefore by the method of characteristics $\bar u^{\sharp}(L,y^\sharp(x))=\bar u^\sharp(\tilde x)$ implies that 
$$L\left(f'(\bar u^\sharp (\tilde x))- \frac{f(\bar u^\sharp(x))}{\bar u^\sharp(x)}\right) ~\ge ~x-\tilde x ~= ~\frac{4}{M\delta}(\bar u^{\sharp}(\tilde x) - \bar u^\sharp (x)) .$$
Therefore
$$
\frac4{M\delta}(\bar u^{\sharp}(\tilde x) - \bar u^\sharp (x)) ~ \le ~  L\left(f'(\bar u^\sharp (\tilde x))- \frac{f(\bar u^\sharp(x))}{\bar u^\sharp(x)}\right)~ \le ~L \omega_2 \bar u^\sharp (\tilde x) ,
$$
which implies
$$
 \bar u^\sharp(\tilde x) \le \frac1{1-\frac{L\omega_2M\delta}4}\bar u^\sharp(x) \le 2 \bar u^\sharp(x) \qquad \mbox{for} \quad \delta < \frac{2}{L\omega_2M}.
$$

We can then estimate
$$
\begin{array}{rl}
y^\sharp(x)-x+\ell_0 &= \ds ~ x + \ell_0 + \int_0^L \left(\frac{f\bigl(u(s,y^\sharp(s,x))\bigr)}{u(s,y^\sharp(s,x))}-f'(0)\right)ds
\\[4mm]
&\ds\le~ x + \ell_0 + 2L L_{f/u}\bar u^\sharp(x) \\[4mm]
&= ~ \ds x + \ell_0 +  \frac12L L_{f/u} M\delta (b-x) ~\le~  \frac{L_\Phi}{2L_\psi}(b-x),
\end{array}
$$ 
provided that $\delta < \frac{L_\Phi}{L_\psi L  L_{f/u}  M}$ as in  (\ref{delp3}).

%
%
\v
{\bf 7.} 
To prove (iv), 
we observe that  for $x\in [b'',b']$  by (\ref{LP2}) one has
\bel{Pblb}\Phi^0(x)~\geq~\Phi^0(b'') ~=~ \Phi^0(b-\delta^{-1})~\geq~\Phi^0(b) - L_{\Phi}\delta^{-1}.\eeq
 For $x\in [b'', b']$, assuming $\bar u(x)= {M(b-x)\over 4(b-b'')}$,
 the positive term on the right hand side of  (\ref{m2}) can thus be bounded as
\bel{gain}\Lambda^{\bar u}_+(x)~\leq~\int_a^b \bar u(\xi)\Big[ \Phi^{\bar u}(\xi)-\Phi^{\bar u}(x)\Big]_+\, d\xi~\leq~\kappa 
\bigl[\Phi^0(b) - \Phi^0(b'')\Big]~\leq~\kappa\cdot L_\Phi \,\delta^{-1}.
\eeq
On the other hand, the negative term can be estimated as
\bel{loss}\Lambda^{\bar u}_-(x)~\doteq~\int \bar u(x)\Big[ \Phi^u(x)-\Phi^{ \bar u}(\xi)\Big]_+\, d\xi~\geq~{M\over 8} \cdot\int\Big[ \Phi^0(b-\delta^{-1}) -
\Phi^{max}(\xi)\Big]_+\, d\xi.
\eeq
Observe that the right hand side of (\ref{gain}) is uniformly bounded regardless of the choice of $K$ and hence of 
the interval $[a,b]$ (see Fig.~\ref{f:tf272}). On the other hand, the loss term (\ref{loss}) becomes arbitrarily large as $K\to \infty$.
By choosing $K$ large enough, the difference $\Lambda^{\bar u}_+(x)- \Lambda^{\bar u}_-(x)$ can thus be rendered arbitrarily large and negative, uniformly for all $x\in [b'', b']$. This proves (iv).   The proof of (v) is entirely similar.
\begin{figure}[htbp]
   \centering
\includegraphics[width=0.6\textwidth]{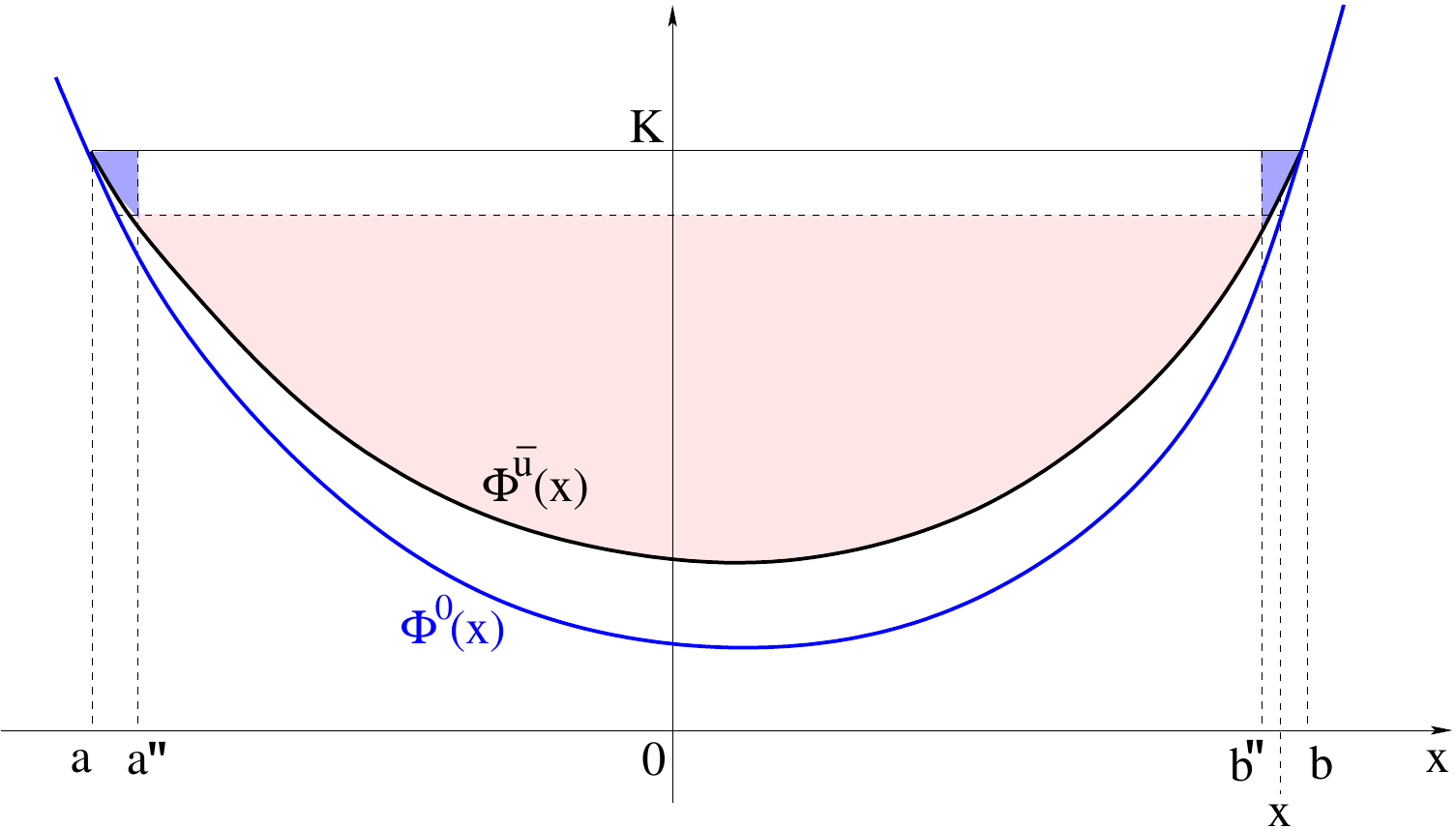}
   \caption{\small The positive and negative terms in (\ref{gain})-(\ref{loss}) are proportional to the 
  two shaded areas.  As $K\to+\infty$, the gain term remains uniformly bounded while the loss term becomes arbitrarily large.} \label{f:tf272}
\end{figure}
\v
{\bf 8.}  To prove (ii), let $x\in [b',b[\,$ and assume $\bar u(x)= {M(b-x)\over 4(b-b'')}$.
By  \eqref{LP2}, in this case the gain term can be bounded by
\bel{gain'} \Lambda^{\bar u}_+(x)~=~\int_a^b \bar u(\xi) \Big[ \Phi^{\bar u}(\xi) - \Phi^{\bar u} (x)\Big]_+\, d\xi
~\leq~\kappa\Big[ \Phi^0(b)-\Phi^0(x)\Big]_+~\leq~\kappa \cdot L_\Phi (b-x).
\eeq
On the other hand, the loss term is
\bel{loss'} \Lambda^{\bar u}_- (x)~=~\bar u(x) \int_a^b \Big[ \Phi^{\bar u}(x) - \Phi^{\bar u} (\xi)\Big]_+\, d\xi~\geq~
{M\over 4(b-b'')} (b-x) \cdot \int_a^b\Big[ \Phi^0(x) - \Phi^{max}(\xi)\Big]_+\, d\xi.
\eeq
By choosing $K$ large enough
the integral on the right hand side of (\ref{loss'}) can be rendered arbitrarily large, uniformly for $x\in [b',b]$.
This achieves the desired inequality  $\Lambda^{\bar u}_- (x)> \Lambda^{\bar u}_+ (x)$,
for all $x\in [b', b[\,$. The proof of (iii) is entirely similar.
\v
 {\bf 9.}  Let an initial departure rate $u(0,\cdot)=\bar u\in \L^1(\R)\cap\L^\infty(\R)$
be given,  with compact support.
 For any time interval $\theta\in [0,T]$, by the uniform boundedness of the support of $u(\theta,\cdot)$ 
 proved in steps {\bf 4--8}, we can use 
 (\ref{linf}) and obtain
 \bel{lifb}
 \bigl\| u(\theta, \cdot)\bigr\|_{\L^\infty}~\leq~ \exp \bigl\{ C_0(1+R) T\bigr\}\cdot l\| \bar u\|_{\L^\infty}
 \qquad\qquad\forall \theta\in [0,T].
 \eeq
If now $u,v\in \L^1(\R)\cap\L^\infty(\R)$ satisfy the same bounds as above, then 
 \bel{PL1}\bigl\|\Phi^u-\Phi^v\bigr\|_{\L^1}~=~\O(1)\cdot \|y^u - y^v\|_{\L^1}~=~\O(1)\cdot \|u-v\|_{\L^1}\,.\eeq
 Indeed, the first inequality follows  from (\ref{Pudef}) and the Lipschitz continuity of the cost function
 $\psi$.  The second inequality follows from (\ref{Liparr}).
 
In turn,  (\ref{dLuv}) shows the Lipschitz continuity of the map $u\mapsto \Lambda(u)$, restricted to
a domain of functions  uniformly bounded in $\L^1(\R)\cap\L^\infty(\R)$.
By the standard theory of ODEs in Banach spaces (see for example \cite{JD, RM}), this yields the global existence of a unique solution
to the Cauchy problem (\ref{m2}).
\endproof
\v

{\bf Acknowledgments.}  The research of A.\,Bressan and D.\,L.\,Harmon  was partially supported by NSF with
grant  DMS-2306926, ``Regularity and approximation of solutions to conservation laws".
The research of E.\ Marconi was partially supported by Gnampa - INDAM project 2026 ``Discrete to continuum limit in traffic and related models".

The authors acknowledge the use of ChatGPT to check for misprints and inaccuracies.
The manuscript was written entirely by the authors, who take full responsibility for its mathematical content and correctness.

\v

\end{document}